\newcommand{\vol}{\textnormal{vol}}
\def\f12{\frac 1 2}
\def\a{\alpha}
\def\b{\beta}
\def\ga{\gamma}
\def\Ga{\Gamma}
\def\ep{\epsilon}
\def\la{\lambda}
\def\Si{\Sigma}
\def\om{\omega}
\def\Om{\Omega}
\def\H{\mathcal{H}}
\def\Hb{\underline{\mathcal{H}}}
\def\Lb{\underline{L}}
\def\pa{\partial}
\def\les{\lesssim}
\def\div{\text{div}}
\def\cD{\mathcal{D}}
\def\cE{\mathcal{E}}
\def\tcD{\widetilde{\cD}}
\def\tv{\widetilde{v}}
\def\tSi{\widetilde{\Si}}
\def\dvol{\textnormal{dvol}}
\newcommand{\nabb}{\mbox{$\nabla \mkern-13mu /$\,}}
\newtheorem{thm}{Theorem}
\newtheorem{Prop}{Proposition}[section]
\newtheorem{Lem}{Lemma}[section]
\newtheorem{Cor}{Corollary}[section]
\newtheorem{Remark}{Remark}[section]
\begin{document}

\title{Asymptotic decay for defocusing semilinear wave equations on Schwarzschild spacetimes}
\author{He Mei, Dongyi Wei and Shiwu Yang}

\date{}

\maketitle

\begin{abstract}
In this paper, we study the long time dynamics of solutions to the defocusing semilinear wave equation $\Box_g\phi=|\phi|^{p-1}\phi$ on the Schwarzschild black hole  spacetimes. For $\frac{1+\sqrt{17}}{2}<p<5$ and sufficiently smooth and localized initial data, we show that the solution decays like $|\phi|\lesssim t^{-1+\epsilon}$ in the domain of outer communication. The proof relies on the $r$-weighted vector field method of Dafermos-Rodnianski together with the Strichartz estimates for linear waves by Marzuola-Metcalfe-Tataru-Tohaneanu.
\end{abstract}

\section{Introduction}

This paper is devoted to the study of the global behaviors of solutions to the energy subcritical  defocusing semilinear wave equation
\begin{equation}
  \label{maineq}
  \Box_g\phi=|\phi|^{p-1}\phi,\quad 1<p<5.
\end{equation}
on the exterior region of the Schwarzschild black hole spacetimes with metric
 \begin{equation*}
  g=-(1-\mu )dt^{2}+(1-\mu)^{-1}dr^2 +r^2d\omega^2,\quad \mu =\frac{2M}{r}
\end{equation*}
in Boyer-Lindquist coordinates $(t,r,\omega)$.

 The Schwarzschild family of black hole spacetimes parameterized by the mass $M>0$ are the first nontrivial solutions to the vacuum Einstein equation.  Motivated by the celebrated nonlinear stability of black hole spacetimes (see recent breakthrough and related discussions in \cite{Dafermos21:Stab:Schwarz}, \cite{Klainerman22:Stab:Kerr}), numerous new ideas and methods are developed to study linear and nonlinear fields on various black hole backgrounds in the past decades. The classical result of  Kay-Wald \cite{Wald87:Schwarz} shows the uniform boundedness of linear waves on Schwarzschild spacetimes.  A significant breakthrough contributed by Dafermos-Rodnianski \cite{dr3} in this direction is the quantitative inverse polynomial decay for linear waves, which plays a crucial role in addressing the full nonlinear problems. Improved decay estimates including the celebrated Price's law \cite{price72:redshift} for linear wave equations could be found for example in
 % improved decay for linear wave.
    \cite{Stefanos18:Latetime},
\cite{stefanos18:vector},
  \cite{Soffer11:Price:Schwarz},
\cite{improvLuk},
\cite{Masy21:Price:spins},
  %related to Price law, improved decay estimates for linear waves.
  \cite{Tataru12:price}. % price law for spin fields on schwarz
 Different methods and techniques have also been developed to study the nonlinear wave equations on Schwarzschild and Kerr black hole spacetimes
  \cite{Blue06:semiNLW:Schwarz},
   %special attention related to the nonlinear stability of black hole.
  \cite{Dafermos22:NW:Kerr},
  \cite{Mihai18:NW:Schwarz},
  \cite{Luk13:NW:Kerr},
  \cite{Igor05:sNW:BH},
  \cite{Mihai12:Strichartz:Kerr},
 \cite{Mihai22:NW:Kerr}.
 %small data global solutions and decay estimates for nonlinear wave equations on black hole spacetimes.
 In particular the Strichartz type estimate was used in
\cite{Lai23:Strauss:Schwarz},
\cite{Sogge14:Strauss:Kerr}
 to address the Strauss conjecture for the nonlinear model closely related to \eqref{maineq}. We also mention that there are large global dispersive solutions to a class of nonlinear wave equations on Schwarzschild spacetimes \cite{Jinhua22:NW:Schwarz}.

Most of the aforementioned results   heavily relied on the associated linear theory or perturbative argument. The present paper aims to demonstrate various decay properties for solutions to the above semilinear wave equations on Schwarzschild spacetimes with general large data prescribed on a  Cauchy initial hypersurface.

\subsection{Schwarzschild spacetimes}
Under the above Boyer-Lindquist coordinates, it is well known that $\{r=2M\}$ (the event horizon) is coordinate singularity instead of spacetime singularity, which could be seen from the  Regge-Wheeler tortoise coordinate
\begin{align*}
  &r^*=r+2M\ln(r-2M)-3M-2M\ln M,\quad dr^*=(1-\mu)^{-1}dr,
\end{align*}
or the advanced and retarded Eddington-Finkelstein coordinate system
 \[
 v=\frac{t+r^*}{2},\quad u=\frac{t-r^*}{2}.
 \]
 Then the metric $g$ can also be written as
\begin{align*}
  g=-4(1-\mu)dv^{2}+4dvdr+r^2d\omega^2=-4(1-\mu)du dv+r^2 d\om^2.
\end{align*}
 We will mainly study the solutions to the nonlinear wave equation in the domain of outer communication $\{r\geq 2M\}$, where the level surface $\{v=const\}$ is null under the coordinate system $(v, r, \omega)$. This will cause some degeneracy when doing energy estimates. To obtain a uniform timelike vector field, we will also use the coordinate system  constructed in the work \cite{Tataru10:Strich:Schwarz} by  Marzuola-Metcalfe-Tataru-Tohaneanu. More precisely, let $\lambda(r)$ be an increasing function of $r$ such that
 \[
 2-(1-\mu)\lambda'(r)>0,\quad \lambda(r)\geq r^*
 \]
 and $\lambda(r)=r^*$ when $r\geq \frac{5}{2}M$. Then define
$$\tilde{v}=2v-\lambda(r).$$
By the choice of $\lambda(r)$, we see that away from the event  horizon $r\geq \frac{5}{2}M$ it holds that $\tilde{v}=t$. Under this new coordinate system $(\tv, r, \omega)$, the metric reads as
 \begin{align*}
  g=&-(1-\mu)d\widetilde{v}^2+2\Big(1-(1-\mu)\lambda'(r)\Big)d\widetilde{v}dr
  +\Big(2\lambda'(r)-(1-\mu)\lambda'(r)^2\Big)dr^2+r^2d\omega^2.
\end{align*}
Obviously the vector field $\pa_{\tv}$ is Killing and it can be checked that the level surface $\{\tv=const\}$ is spacelike when $r\geq 2M$. The volume element under these coordinate systems is of the form
\begin{align*}
  d\text{vol}=r^2dr d\widetilde{v} d\omega=r^2dr dt d\omega=r^2(1-\mu)dr^{*}d\omega dt
  =2r^2(1-\mu)dvdud\omega.
\end{align*}

\subsection{Foliations and Energy Flux}
We will study the asymptotic behavior of the solutions in the  domain of outer communication, including the event horizon. Mainly two types of foliations will be frequently used in this paper. Near the event horizon, the hypersurfaces with constant $ \widetilde{v}$ used in  \cite{Tataru10:Strich:Schwarz} would be of particular importance while in the region near the future null infinity, we will switch to the double null foliation.

Let  $r_0<2M$ be some positive constant (slightly smaller than $2M$). We will study the solution in the region
\begin{equation*}
  \mathcal{M}=\{\widetilde{v}\geq 0,\quad r\geq r_0\},
\end{equation*}
which can be foliated with constant-$\widetilde{v}$ hypersurfaces $\widetilde{\Sigma}_{\widetilde{v}}$ defined by
$$\widetilde{\Sigma}_{\widetilde{v}_0}=\{\widetilde{v}=\widetilde{v}_0\}\cap\{r\geq r_0\}.$$
This foliation gives good energy estimates through $\widetilde{\Sigma}_{\widetilde{v}}$ near the event horizon as $ \partial_r$ is spacelike. In the far away region, we instead use the modified double null foliation. The region bounded by $ \widetilde{\Sigma}_{\widetilde{v}_1}$ and $ \widetilde{\Sigma}_{\widetilde{v}_2}$ will be denoted as $\widetilde{\mathcal{D}}_{\tv_1, \tv_2}$

Let $R>5M$ be a constant to be determined later. Let $ \mathcal{H}_{u}$ be the outgoing null hypersurface
$$ \mathcal{H}_{u}=\{t-r^*=2u,\quad r>R,\quad t\geq 0\}.$$
Then we define
\begin{align*}
\Sigma_u=\{r_0<r<R,\quad \widetilde{v}=2u+R^*,\quad \widetilde{v}\geq 0 \}\cup \mathcal{H}_{u},
\quad  R^{*}=r^*|_{r=R}.
\end{align*}
We may also use a truncated part $\Sigma_u^{v_0}=\Sigma_u\cap\{v\leq v_0\}$. Similarly define the incoming null hypersurface $ \underline{\mathcal{H}}_{v}$
\[
\underline{\mathcal{H}}_{v}=\{ t+r^*=2v,\quad r\geq R,\quad \tv\geq 0\}
\]
as well as the  truncated part $\underline{\mathcal{H}}_v^{u_1,u_2}=\underline{\mathcal{H}}_v\cap\{u_1\leq u\leq u_2\}$. See the figure described below. Similarly $\cD_{u_1, u_2}$ stands for the region bounded by $\Si_{u_1}$ and $\Si_{u_2}$ while $\cD_{u_1, u_2}^{v_0}$ is the region  $\cD_{u_1, u_2}^{v_0}\cap \{v\leq v_0\}$.
\begin{center}
 \begin{tikzpicture}
  \draw[->] (0,0) node[below]{$r_0$}-- (7,0) coordinate[label = {below:$r$}] (xmax);
  \draw[->] (0,0) -- (0,5) coordinate[label = {right:$\tv$}] (ymax);
     \fill [black] (0.5, 0) node[below]{$2M$} circle (2pt);
    \fill [black] (1.3, 0) node[below]{$3M$} circle (2pt);
  \fill [black] (2.5,0) circle (2pt);
  \draw (2.5,0 ) node[below]{$R$} -- (7,4);
  \draw[dotted] (2.5, 0)--(2.5, 4.6);
  \draw (0, 2)--(2.5, 2)--(5.5, 5);
  \draw  (2.5, 4)--(6.5,0);
     \node at (6, 1) {$\underline{\mathcal{H}}_{v}$};
     \node at (1.3, 2.2) {$ \Si_u$};
     \node at (4.5, 4.5) {$ \mathcal{H}_{u}$};

\end{tikzpicture}
\end{center}

For any hypersurface $\Si$ in $\mathcal{M}$ and a scalar field $\psi$, denote $E[\psi](\Si)$ as the standard energy flux (up to some universal constant) for the scalar field $\psi$ through $\Si$. For $p\geq 0$, let
\[
E[\psi, p](\Si)=E[\psi](\Si)+p\int_{\Si}|\psi|^{p+1}d\sigma.
\]
Here $d\sigma$ is the surface measure on $\Si$. In application, the hypersurfaces will be $\widetilde{\Si}$, $\Si_u$, $\mathcal{H}_u$ or $\underline{\mathcal{H}}_{v}$. Since $\Si_u$ is a combination of $\widetilde{\Si}$ and $\mathcal{H}_u$, for the basic three types of hypersurfaces, the explicit energy flux is given as follows:
\begin{align*}
E[\psi](\widetilde{\Si}_{\tv})=\int_{\widetilde{\Si}_{\tv}}(|\pa \psi|^2  & r^2{ +| \psi|^2}) drd\om,\quad E[\psi](\mathcal{H}_u)=\int_{ \mathcal{H}_u}((|L\psi|^2+|\nabb\psi|^2)  r^2{ +| \psi|^2}) dvd\om, \\
&E[\psi](\Hb_v)=\int_{ \Hb_v}((|\Lb\psi|^2+|\nabb\psi|^2)  r^2{ +| \psi|^2}) dud\om.
\end{align*}
Here $\pa$ is short for the full derivative and $L=\pa_t+\pa_{r^*}$, $\Lb=\pa_t-\pa_{r^*}$ and $\nabb$ is the covariant derivative on the sphere with radius $r$. We remark here that although there is a factor $2(1-\mu)$ in the surface measure on $\H_u$ or $\Hb_v$, it is bounded below as $r\geq R\geq 5M$.

The commutators will be the Killing fields in the set
\[
\Ga=\{\pa_{\tv},\quad \Om_{ij}=x_i\pa_j-x_j \pa_i\},
\]
in which  we set $x=r\omega=(x_1,x_2,x_3)$ in the $ (\tv,r,\omega)$ coordinate.
We may also use $\pa_{\om}$ to be short for the angular momentum vector field $\Om_{ij}$. For integer $k$, $Z^k$ is short for $Z_1 Z_2\ldots Z_k$ with $Z_j\in \Ga$. For the higher order energy, we define
\[
E[Z^{\leq k}\phi](\Si)=\sum\limits_{l\leq k,\ Z\in \Ga}E[Z^l\phi](\Si ).
\]
For simplicity, we also use $\mathcal{E}_{k}$ to denote the size of the initial energies
\begin{align*}
\cE_0=E[\phi, p](\tSi_0),\quad \cE_k=E[Z^{\leq k}\phi](\tSi_0).
\end{align*}

 \subsection{Statement of the main theorem}

We will study the Cauchy problem to the nonlinear wave equation \eqref{maineq} on $\mathcal{M}$ with data $\phi(0, x)=\phi_0$, $\pa_{\tv}(0, x)=\phi_1$ prescribed on the initial hypersurface $\{\tv=0\}$.
For simplicity we will assume that the initial data are smooth and compactly supported in $\{r\leq R\}$. The results here can be easily extended to general data bounded in some weighted energy space.
 For such initial data the solution exists globally on $\mathcal{M}$ (see for example \cite{Shatah:YM:generalManifold} \footnote{The method there can be easily adapted to the defocusing semilinear wave equation studied here due to the conservation of energy. See also the discussion in \cite{Blue03:ILD:Schwarz}}).  The purpose of this paper is to show that the solution exhibits certain quantitative decay properties.
Our main results are:
\begin{thm}
\label{thm:potential:decay}
Consider the Cauchy problem to the defocusing semilinear wave equation \eqref{maineq} on the\\ Schwarzschild spacetime with compactly supported initial data $(\phi_0, \phi_1)$. Then it holds that
\begin{itemize}
\item [(1)]
If $5>p>p_0\approx  1.52$, the solution $\phi$ verifies an integrated local energy decay estimate
\begin{equation*}
\begin{split}
  \iint_{\mathcal{M}}\frac{1}{r^2}|\partial_r\phi|^2
  +\left(1-\frac{3M}{r}\right)^2\left(\frac{1}{r^2}|\partial_{\widetilde{v}}\phi|^2
  +\frac{1}{r}|\slashed\nabla\phi|^2\right)
  +\frac{1}{r^4}|\phi|^2+\frac{p}{r}|\phi|^{p+1}d\vol
  \leq C\cE_0
  \end{split}
\end{equation*}
for some constant $C$ depending only on $p$ and $M$.
\item [(2)]
If  $\frac{1+\sqrt{17}}{2}<p<5$, there exists some $k_0\in(1, 2)$ such that
\begin{equation*}
  \iint_{\mathcal{M} }|\phi|^{k_0(p-1)}d\vol
  \leq C
\end{equation*}
for some constant $C$ depending on $p$, $M$ and the zeroth order energy $\cE_0$.
\item [(3)]
For $\frac{1+\sqrt{17}}{2}<p<5$ and any $1<\gamma<2$, the solution satisfies the following pointwise decay estimates
\begin{equation*}
|\phi|\leq \left\{
\begin{aligned}
&C   \cE_1^{C}\sqrt{\cE_2} \tv^{-\frac{\ga}{2}} ,\qquad\qquad \qquad r_0\leq r\leq 5M ,\\
&C \cE_1^{C}\sqrt{\cE_2}  {(rv)^{-\frac{1}{2}} }(1+|u|)^{-\frac{\gamma-1}{2}} , \quad r> 5M
\end{aligned}
\right.
\end{equation*}
for some constant $C$ depending on $\ga$, $p$, $M$ and $\cE_0$.
\end{itemize}
  \end{thm}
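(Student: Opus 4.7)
The plan is to prove the three parts in order, using the ILED of (1) as the foundation for the spacetime integrability of (2), which in turn feeds into the higher-order energy estimates needed for the pointwise decay of (3). The two main analytic tools are the Morawetz-type multiplier method adapted to Schwarzschild (to handle trapping at the photon sphere $r=3M$ and degeneracy at the horizon, combined with a red-shift vector field of Dafermos-Rodnianski) and the $r^p$-weighted hierarchy (to propagate decay toward null infinity), upgraded with the Strichartz estimates of Marzuola-Metcalfe-Tataru-Tohaneanu near the end.

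For part (1), I would apply the energy identity associated with a combined multiplier of the form $X = f(r)\pa_{r^*} + \pa_{\tv}$, with $f(r)$ vanishing to appropriate order at $r=3M$ and compatible with a red-shift field localized to $r$ near $2M$. The linear part of the bulk term matches the stated structure $r^{-2}|\pa_r\phi|^2 + (1-3M/r)^2(r^{-2}|\pa_{\tv}\phi|^2 + r^{-1}|\nabb\phi|^2) + r^{-4}|\phi|^2$. The nonlinear contribution $|\phi|^{p-1}\phi\cdot X\phi$ integrates by parts to $\text{div}(f\pa_{r^*})\cdot |\phi|^{p+1}/(p+1)$, and the correct choice of $f$ yields a good-signed $p\,r^{-1}|\phi|^{p+1}$ in the bulk. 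The threshold $p_0\approx 1.52$ arises from requiring that a Hardy-type inequality for $r^{-4}|\phi|^2$ absorb the negative zeroth-order contributions against the nonlinear positive bulk; this reduces to an algebraic root condition on the interpolation exponent.

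For part (2), I would run the $r^q$-weighted energy estimate with $q\in(0,2)$ on the outgoing cones $\H_u$. The multiplier $r^q L$ generates a spacetime bulk $r^{q-1}|L\phi|^2$ plus a good nonlinear contribution of the form $r^{q-1}|\phi|^{p+1}$ in the wave zone. Coupled with the ILED from step (1), the standard Dafermos-Rodnianski pigeonhole argument gives $E[\phi](\Si_u)\lesssim (1+u)^{-\gamma}$ for $\ga$ close to $2$. Interpolating between this decay, the conserved $L^{p+1}$ bound on each $\tSi$, and Sobolev embedding on spheres yields the $L^{k_0(p-1)}$ spacetime integrability for some $k_0\in(1,2)$. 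The threshold $p>(1+\sqrt{17})/2$ arises from requiring $k_0(p-1)$ to be simultaneously larger than $p+1$ (so the interpolation closes) and bounded by a natural decay rate dictated by the hierarchy.

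For part (3), I would commute \eqref{maineq} with the Killing fields in $\Ga = \{\pa_{\tv},\Om_{ij}\}$ to derive equations for $Z^{\leq k}\phi$ with source terms schematically of the form $|\phi|^{p-1}Z^k\phi + \textnormal{lower order}$. The spacetime integrability of $|\phi|^{k_0(p-1)}$ from part (2) is precisely what allows these sources to be absorbed via H\"older's inequality and Gronwall. Running a second-order $r$-weighted hierarchy for the commuted solutions yields decay of the higher energies $E[Z^{\leq k}\phi](\Si_u)$; then Sobolev embedding on $\tSi_{\tv}$ (for $r\leq 5M$) and along the truncated null cones (for $r>5M$) produces the pointwise bounds. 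The wave-zone profile $(rv)^{-1/2}(1+|u|)^{-(\ga-1)/2}$ follows by combining $r^q$-weighted flux decay in $v$ on $\H_u$ with the $u$-decay of the energy flux through $\Sigma_u$, exploiting the fundamental theorem of calculus along the $L$-direction.

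The main obstacle I expect is the nonlinear coupling in the commuted equation: unlike the linear problem, the source $[Z,|\phi|^{p-1}\phi]$ ties the $k$-th order energy to integral information on $\phi$ itself, so one cannot simply iterate the linear theory. Closing this loop requires exactly the finite $\iint|\phi|^{k_0(p-1)}$ with $k_0(p-1)>2$, which is the constraint forcing $p>(1+\sqrt{17})/2$. A secondary difficulty is that near the trapped set at $r=3M$ direct Sobolev embedding loses a derivative with degenerate weights, so the Strichartz estimates of \cite{Tataru10:Strich:Schwarz} must be invoked — applied to the inhomogeneous equation with right-hand side controlled in a dual Strichartz norm using the ILED and spacetime integrability obtained in the earlier steps — to convert the commuted energy bounds into the stated $L^\infty$ decay.
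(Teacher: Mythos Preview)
Your outline has the right architecture (Morawetz multiplier, $r^p$ hierarchy, commutation, Sobolev), but the logical flow and the role of Strichartz differ from the paper in ways that leave genuine gaps.

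\textbf{On the threshold $p_0$ in (1).} The constant $p_0\approx 1.52$ does not come from a Hardy absorption of $r^{-4}|\phi|^2$. In the paper's argument (Proposition \ref{mainenergy}) the coefficient of $|\phi|^{p+1}$ in the bulk is exactly $(p+1)q-\div X$, and with the explicit multiplier of \cite{Tataru10:Strich:Schwarz} this reduces to $\frac{p-1}{2}t_1(r)-\frac{2M}{r^2}a(r)$; the condition $p>p_0=1+\sup_{r\ge 3M}\frac{4Ma(r)}{r^2t_1(r)}$ is a pure sign requirement on this explicit function, independent of any lower-order Hardy term.

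\textbf{On the order of operations in (2).} You propose to first run the pigeonhole to get $E[\phi](\Sigma_u)\lesssim u_+^{-\gamma}$ and then deduce the $L^{k_0(p-1)}$ bound by interpolation. This is circular: the pigeonhole requires controlling $\int_{u_1}^{u_2}E[\phi](\Sigma_u)\,du$, and the ILED of part (1) degenerates at $r=3M$, so you cannot bound this integral without information on at least one higher derivative (see the paper's Lemma \ref{lp} and Proposition \ref{ephidecay0}). The paper proceeds in the opposite order: the $r$-weighted estimate (Proposition \ref{divsk1}) directly gives $\iint r^{\gamma_0-3}|\phi|^2+r^{\gamma_0-1}|\phi|^{p+1}\,d\vol\lesssim 1$ on $\{r\ge R\}$, and H\"older interpolation between these two spacetime integrals yields $\iint|\phi|^{k_0(p-1)}\,d\vol\lesssim 1$ with no flux decay needed. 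Your explanation of $(1+\sqrt{17})/2$ is also off: one does not need $k_0(p-1)>p+1$ (indeed this fails for $p<3$); the threshold is exactly the condition that the interpolation exponents can be chosen with $k_0<2$ and $0<\gamma_0<p-1$, which reduces to $p^2-p-4>0$.

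\textbf{On the role of Strichartz in (3).} You place Strichartz only at the endgame, to convert energy into $L^\infty$. In the paper it enters much earlier and is indispensable for closing the \emph{energy} estimates themselves. The $L^{k_0(p-1)}$ bound is upgraded (Lemma \ref{k22}) to $|\phi|^{p-1}\in L^k_{\tv}L^l_x$ with $\frac1k+\frac3l=2$, which is exactly the dual-Strichartz exponent needed so that $\|\,|\phi|^{p-1}Z\phi\,\|_{L^1_{\tv}L^2_x}$ is controlled by a Strichartz norm of $Z\phi$; this is how Proposition \ref{H2} closes the first-order energy uniformly. A pure Gronwall argument in $L^1L^2$ from the $L^{k_0}_{t,x}$ bound alone does not close. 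Moreover, the trapping obstruction in the pigeonhole is resolved by the log-loss decomposition of Lemma \ref{lp}: one splits a thin strip $\{|r^*|\le t^{-N}\}$ near the photon sphere, uses the log-loss ILED on the complement, and bounds the thin strip by the Strichartz $L^4_{\tv}L^{12}_x$ norm of $Z^{\le 2}\phi$ (which is allowed to grow polynomially in $\tv$, Lemma \ref{Z2phi}, since the strip carries a factor $t^{-N}$). This mechanism is absent from your sketch, and without it the pigeonhole step in both Proposition \ref{ephidecay0} and Proposition \ref{decayezphi} does not go through.
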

Several remarks are in order.
\begin{Remark}
The lower bound $p_0$ is the root of some polynomial and could be improved by modifying the vector field when deriving the integrated local energy estimate. It would be of great interest whether the above integrated local energy estimate holds for all $p>1$ as it is on the flat Minkowski space.
\end{Remark}

\begin{Remark}
 The compact support assumption on the initial data is merely for simplicity. The argument and results can be extended to general data bounded in some weighted energy space by first studying the solution in the exterior of a forward light cone, where the spacetime is close to the flat Minkowski space.
\end{Remark}

\begin{Remark}
 The solution decays only like $t^{-1+\epsilon}$ near the event horizon, which is far from sharp in view of the celebrated Price's law. In our future work, we will show that Price's law holds for sufficiently large $p$ and general smooth and localized initial data.
\end{Remark}

\subsection{Comments on the Proof}
In this section, we briefly discuss the ideas for the proof. Since we are considering a large data problem, inspired by the case on the flat Minkowski space (see early works \cite{Pecher82:decay:3d},
\cite{Strauss:NLW:decay}  and recent advances in \cite{yang:scattering:NLW}, \cite{yang:NLW:ptdecay:3D}), the asymptotic dynamics of the solution relies on  a type of global dispersive estimate for the nonlinearity. The conformal symmetry of the Minkowski space allows one to show the decay in time for the potential energy
\begin{align*}
\int_{\mathbb{R}^3} |\phi(t, x)|^{p+1}dx\leq C (1+t)^{ \max(4-2p,-2)},
\end{align*}
which, combined with the fundamental solution for the linear wave operator, is sufficient to conclude the long  time dynamics of the nonlinear solutions.

New difficulties and challenges arise for deriving such a priori estimate for the solution without losing any derivative on Schwarzschild spacetimes for the existence of trapped null rays at the photon sphere $\{r=3M\} $ (see for example \cite{Ralston69:ILE}, \cite{Jan15:GauB}). However since the Schwarszchild spacetime is asymptotically flat, the main challenge lies in understanding the solution in the region $\{r\leq R_0\}$ for some large constant $R_0>3M$.
The dispersion of the solution in this bounded region can be captured by the integrated local energy, which was first obtained by Morawetz in \cite{Morawetz68:KG} for the study of decay properties for linear solutions. The vector field method of Klainerman, which is the generalization of the original multiplier method of Morawetz, is a powerful and robust tool to obtain such integrated local energy for solutions of linear wave equations on various backgrounds.

In the Schwarzschild context considered here, such Morawetz estimate dates back to the works \cite{Soffer99:NLS:Schwarz},
\cite{Blue03:ILD:Schwarz} and various different approaches have been developed to obtain such estimates for linear solutions in
 \cite{Blue16:Max:Schwarz},
 \cite{Blue20:linGra:Schwarz},
 \cite{Blue08:Max:Schwarz},
 \cite{Blue05:spin2:Schwarz},
\cite{Igor19:linStab:Schwarz},
\cite{Igor07:note:Schwarz},
\cite{dr3},
\cite{Tataru10:Strich:Schwarz}.
Also see generalizations and extensions to the more complicate  family of Kerr black hole spacetimes in
\cite{Blue:hiddensymmetry},
\cite{Blue:energybound:smallKerr},
\cite{Igor19:decay:Teuko:smKerr},
\cite{Igor11:bd:smallKerr},
\cite{Igor:wave:kerr:ful},
\cite{Mihai20:ILE:Kerr:p},
\cite{Tataru15:ILE:Maxwell:Schwarz},
\cite{Mihai11:ILE:Kerr},
\cite{Mihai12:Strichartz:Kerr}
and to the asymptotic flat spacetimes in
\cite{Blue13:decay:trap},
\cite{Tataru20:ILE:nontrap},
\cite{Tataru12:Strichartz:par},
\cite{Tataru13:localdecay}. However among all these different methods, the spherical harmonic decomposition or Fourier analysis may not be such successful for the large data nonlinear problem studied here. We hence turn to the multiplier method used in \cite{Tataru10:Strich:Schwarz}, in which they proved the boundedness of the nonnegative spacetime integral by explicitly constructing a multiplier of the form $f(r^*)\partial_{r^*}$. Directly applying this multiplier to the nonlinear solution considered here, the coefficient of the potential energy $|\phi|^{p+1}$ will be positive as long as the power $p$ for the nonlinearity has the lower bound given in the main theorem. The integrated local energy estimate for the nonlinear solution then follows in a similar way as that in \cite{Tataru10:Strich:Schwarz}.

It is possible to improve the lower bound for $p$ by carefully modifying the multiplier used in \cite{Tataru10:Strich:Schwarz}. However it may be of particular interest to ask whether the integrated local energy estimate in the main theorem holds for all $1<p<5$ as that in the Minkowski space \cite{yang:scattering:NLW}. At this point, we would like to compare the model problem considered here to the nonlinear equation
\begin{equation}
\label{eq:NW:Blue}
\Box_g \phi= (1-\frac{2M}{r})^{\frac{p-3}{2}}|\phi|^{p-1}\phi
\end{equation}
 investigated by  Blue-Soffer in \cite{Blue07:Mor:NLW:Sch}, where they derived the integrated local energy  estimate for all $1<p\leq 3$. The existence of the multiplier they used was verified numerically. We remark here that the explicit vector field constructed in \cite{Tataru10:Strich:Schwarz} also applies to all $p>1$ for the above nonlinearity. See detailed discussion in section \ref{sec:ILE}.
Note  that the nonlinearity  $(1-\frac{2M}{r})^{\frac{p-3}{2}}|\phi|^{p-1}\phi$ is more singular near the event horizon. This leads us to believe that the integrated local energy decay estimate should hold for all $1<p<5$ for solutions to the equation \eqref{maineq} considered here.

Now in order to obtain pointwise decay estimates for the solutions, we need higher order energy estimates for the solution, which then requires improved dispersive bound on the nonlinearity. Inspired by the related results on the flat Minkowski spacetime, we make use of the robust $r$-weighted energy estimates original introduced by Dafermos-Rodnianski in \cite{newapp} (see for example the general framework discussed in \cite{Georgios16:rp}, \cite{Volker13:LSchw}). This approach gives a hierarchy of $r$-weighted energy estimates derived by using the vector field $r^{\gamma}(\partial_t+\partial_{r^*})$ as multipliers. Combined with the integrated local energy estimates, a suitable choice of $\gamma$ (depending on the power $p$) leads to a spacetime bound
  \begin{equation*}
   \||\phi|^{p-1}\|_{L_{\widetilde{v}}^{k}L_x^{l}(\mathcal{M})}\lesssim 1.
\end{equation*}
for some pair $(k, l)$  such that $ \frac{1}{k}+\frac{3}{l}=2$. See details in Lemma \ref{k22}.  This bound together with the Strichartz estimates for linear solutions on Schwarzschild spacetime in \cite{Tataru10:Strich:Schwarz} is sufficient to conclude the boundedness of the first order energy of the solution
\begin{equation*}
 E[Z\phi](\Sigma_u)+\|Z\phi\|_{L_{\widetilde{v}}^{p_1}L_x^{q_1}(\cD_{u})} \lesssim   1
\end{equation*}
for vector field $Z=\partial_t$ or the angular momentum $\Omega_{ij}$. This higher order energy bound helps to control the solution near the photon sphere in view of the degeneracy of the integrated local energy estimate for the solution.
 A pigeonhole argument as proposed in \cite{newapp} then leads to the energy flux decay of the solution
  \begin{align*}
  E[\phi](\Sigma_u)\lesssim
  u_+^{-\gamma_0}
\end{align*}
for some $\gamma_0>1$.

By using standard argument via Sobolev embedding, the pointwise decay estimate then follows once we have energy flux decay for the first order derivatives of the solution $Z\phi$. Again the trapping problem near $r=3M$ can be solved by a rough bound on the higher order derivatives of the solution, that is $Z^2\phi$. There are two types of nonlinear terms after commuting the equation with $Z^2$
\begin{equation*}
\Box_g (Z^2 \phi)=p(p-1)|\phi|^{p-3}\phi Z\phi Z\phi+p|\phi|^{p-1}Z^2\phi.
\end{equation*}
The second type of nonlinear term is of the same structure as that for the equation for $Z\phi$. And hence can be bounded similarly. For the first type of lower order nonlinearity, it seems that the Strichartz estimate and energy bound for $Z\phi$ are not sufficient to obtain a uniform bound. However by interpolation, we are able to show that the $L_{\tv}^1 L_x^2$ norm of $|\phi|^{p-2}| Z\phi|^2$ grows at most polynomially, that is
\begin{align*}
\||\phi|^{p-2}|Z\phi|^2 \|_{L_{\tv}^1L_x^2(\tv_0\leq \tv \leq  \tv_1)} \les  \tv_1,
\end{align*}
which then indicates that the higher order energy as well as the spacetime mixed norm
\begin{equation*}
  E[Z^2\phi](\tSi_{\tv_1}) + \|Z^2\phi\|_{L_{\tv}^{p_1}L_x^{q_1}( \tv_0\leq \tv \leq \tv_1)}^2
  \lesssim \tv_1
\end{equation*}
also grows in time. This mind growth is allowed in view of the following technical lemma
 \begin{equation*}
  \|f\|_{L^2(R^3)}
  \les \ln t\||\ln{ ||x|-1|}|^{-1}f\|_{L^2(R^3)}+t^{-N}\|f\|_{L^q(R^3)},\quad N>0, q>2,\quad  t>2
  \end{equation*}
  for all function $f$ supported in $ \{\frac{1}{2}\leq|x|\leq \frac{3}{2}\}$. To apply the pigeonhole argument to $Z\phi$, in view of the trapping near the photon sphere $r=3M$, one needs to control $$\int_{u_1}^{u_2}\int_{ r\approx 3M}|(\partial_{\tv}, \slashed\nabla)Z\phi|^2 d\vol,$$ which with the help of the above inequality by letting $N$ large enough and $q=12$ can be bounded above by
\begin{align*}
 \int_{u_1}^{u_2}\int_{ |r^*|\leq \f12 }|\ln u_{+}|^2|\ln |r^{*}||^{-2}|(\partial_{\tv},\slashed\nabla)Z\phi|^2 dx
 +\int_{u_1}^{u_2} u_{+}^{-N} \|( \partial_{\tv},\slashed\nabla) Z\phi\|_{L_x^{12}( |r^*|\leq \f12)}^2du.
\end{align*}
The second term on the right hand side can be further controlled by using the higher order energy estimate for $\partial_t Z\phi$ while the first term can be bounded by the energy flux of $Z\phi$ in view of the log-loss integrated local energy estimate in \cite{Tataru10:Strich:Schwarz}. See Proposition \ref{prop:IS:v} for more details. Therefore we can derive that
\begin{equation*}
  \int_{u_1}^{u_2}E[Z\phi](\Sigma_u)du
     \les  (\ln (u_2)_+)^2 E[Z\phi](\Si_{u_1})+ \int_{\mathcal{H}_{u_1}}
  r|L(rZ\phi)|^2dvd\omega,
\end{equation*}
which combined with the standard energy estimate is sufficient to conclude the energy flux decay for $Z\phi$
  $$E[Z\phi](\Sigma_{u})\lesssim u_{+}^{-\gamma}$$
for all $1<\gamma<\gamma_0$. Once we have this higher order energy flux decay, the pointwise decay estimate for the solution then follows by using standard argument.

We make a convention through out this paper that $A\les B$ means there is a constant $C$ depending only on the power $p$ of the nonlinearity, the constant $\ga\in (1, 2)$, the mass $M$, the initial energy $\cE_0=E[\phi, p](\tSi_0)$ (for general unrestricted data, it should be some weighted energy) and some small positive constant $\epsilon$ such that $A\leq CB$. We emphasize here that the implicit constant also relies on  $R$ used to foliate the spacetimes. However $R$ will be chosen to rely  only on $p$, $M$ and $\ga$.

\subsection*{Acknowledgment}
D. Wei is supported by the National Key R\&D Program of China 2021YFA1001500.
S. Yang is supported by the National Key R\&D Program of China 2021YFA1001700 and the National Science Foundation of China  12171011,  12141102.

\section{Vector field method and Strichartz estimates}
We review the vector field  method for wave equations. For a scalar field $\psi$, define the associated energy momentum tensor
\begin{equation*}
\label{Tk}
  T_{\alpha\beta}[\psi, k]=\partial_{\alpha}\psi\partial_{\beta}\psi
  -\frac{1}{2}g_{\alpha\beta}\Big(\partial^{\gamma}\psi\partial_{\gamma}\psi
  +\frac{2k}{p+1}|\psi|^{p+1}\Big), \quad k=0, 1.
\end{equation*}
We may also use $T_{\a\b}[\psi]$ be short for $T_{\a\b}[\psi, 0]$. The case when $k=1$ will only be used for the  zeroth order energy estimates.
For any vector field $X$, a function $q$ and a 1-form $m$, define the current
\begin{equation*}
%\label{current}
  P_{\alpha}[\psi,X,q,m, k]
  =T_{\alpha\beta}[\psi, k]X^{\beta}+q\psi\partial_{\alpha}\psi
  -\frac{1}{2}\partial_{\alpha}q\psi^2+\frac{1}{2}m_{\alpha}\psi^2.
\end{equation*}
Then we can compute the divergence
\begin{equation*}
%\label{divsrelation}
  \nabla^{\alpha} P_{\alpha}[\psi,X,q,m,k]=\Box_g\psi (X\psi+q\psi)-\frac{k}{p+1} \div (X  |\psi|^{p+1})
  +Q[\psi,X,q,m],
\end{equation*}
in which
\begin{equation*}
%\label{defofq}
  Q[\psi,X,q,m]
  =T_{\alpha\beta}[\psi]\pi_X^{\alpha\beta}
  +q\partial^{\alpha}\psi\partial_{\alpha}\psi+\psi m_{\alpha}\partial^{\alpha}\psi
  +\frac{1}{2}(\nabla^{\alpha}m_{\alpha}-\Box_g q)\psi^2.
\end{equation*}
Here $\pi_X=\frac{1}{2}\mathcal{L}_Xg$ is the deformation tensor of the Schwarzschild metric $g$ along the vector
field $X$. By Stokes' formula we then obtain the energy identity
\begin{equation}
\label{eq:energy:id}
    \int_{\partial\mathcal{D}}i_{P[\psi,X,q,m,k]}d\text{vol}
    =\iint_{\mathcal{D}}  \Box_g\psi (X\psi+q\psi)-\frac{k}{p+1} \div (X  |\psi|^{p+1} )
  +Q[\psi,X,q,m]d\text{vol}
\end{equation}
for any domain $\mathcal{D}$ in $\mathcal{M}$.

Next we give two lemmas which will be used frequently in this paper.
\begin{Lem}
\label{main1}
Let $f(t)$, $a(t)$ be continuous functions defined on $[0,\infty)$. For some $p_1, p_2\geq 1$, assume $a\in L^{p_1}$ and $f(t)$ verifies
\begin{equation*}
  \|f\|_{L^{p_2}_{([t_1,t_2])}}\leq C_0+C_0\|a\|_{L^{p_1}_{([t_1,t_2])}}
  \|f\|^{1+\theta}_{L^{p_2}_{([t_1,t_2])}}
  \end{equation*}
for some positive constants $C_0, \theta$ and any $t_2\geq t_1\geq 0$.
Then there exists a constant $C_1$ depending only on $\|a\|_{L^{p_1}}$ and $\theta$ such that
$$\|f\|_{L^{p_2}([0,\infty))}\leq C_1 C_0.$$
\end{Lem}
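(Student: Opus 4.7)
The statement is a standard bootstrap/continuity lemma, and the plan is to partition $[0,\infty)$ into finitely many subintervals on each of which $\|a\|_{L^{p_1}}$ is sufficiently small, run a short continuity argument on each subinterval to pin $\|f\|_{L^{p_2}}$ down by $2C_0$, and then recombine the pieces via the additivity of the $L^{p_2}$-norm raised to the power $p_2$.

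First I would fix a bootstrap threshold $\delta>0$ small enough that $C_0\delta(2C_0)^{1+\theta}<C_0$, for instance $\delta=\tfrac12(2C_0)^{-(1+\theta)}$; this choice depends only on $C_0$ and $\theta$. Since $a\in L^{p_1}([0,\infty))$, absolute continuity of the integral yields a finite partition $0=t_0<t_1<\cdots<t_N=\infty$ with
$$\|a\|_{L^{p_1}([t_k,t_{k+1}])}\leq \delta,\qquad 0\leq k\leq N-1,$$
where the number of subintervals $N$ is controlled solely by $\|a\|_{L^{p_1}}$, $\delta$ and $p_1$.

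On each subinterval I would consider the continuous non-decreasing function $g_k(t):=\|f\|_{L^{p_2}([t_k,t])}$, which satisfies $g_k(t_k)=0$ and, by the hypothesis specialized to $[t_1,t_2]=[t_k,t]$, the nonlinear inequality
$$g_k(t)\leq C_0+C_0\|a\|_{L^{p_1}([t_k,t])}\,g_k(t)^{1+\theta}\leq C_0+C_0\delta\, g_k(t)^{1+\theta}.$$
A standard continuity bootstrap then forces $g_k(t)\leq 2C_0$ throughout $[t_k,t_{k+1}]$: if $t^{\ast}$ were the smallest point at which $g_k(t^{\ast})=2C_0$, the above inequality would give $2C_0\leq C_0+C_0\delta(2C_0)^{1+\theta}<2C_0$, a contradiction.

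Finally I would sum over the partition,
$$\|f\|_{L^{p_2}([0,\infty))}^{p_2}=\sum_{k=0}^{N-1}\|f\|_{L^{p_2}([t_k,t_{k+1}])}^{p_2}\leq N(2C_0)^{p_2},$$
so that $\|f\|_{L^{p_2}([0,\infty))}\leq 2N^{1/p_2}C_0$, yielding the claimed $C_1=2N^{1/p_2}$ (which through the choice of $\delta$ also depends on $C_0$ besides $\|a\|_{L^{p_1}}$ and $\theta$). The argument is essentially routine, with no real obstacle; the only point that deserves care is the continuity of $g_k$ in $t$, which follows from $f\in C([0,\infty))$ via dominated convergence and is what allows the bootstrap to close at the first hypothetical crossing $t^{\ast}$.
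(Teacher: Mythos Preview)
Your proposal is correct and follows essentially the same route as the paper: partition $[0,\infty)$ into finitely many subintervals on which $\|a\|_{L^{p_1}}\leq\delta$, run a continuity/bootstrap argument on each piece to bound $\|f\|_{L^{p_2}}$ by a fixed multiple of $C_0$, and then sum. Your explicit choice of $\delta$ and threshold $2C_0$ is a clean variant of the paper's formulation via the zeros of $g(s)=C_0+C_0\delta s^{1+\theta}-s$, and your parenthetical remark that $C_1$ inherits a dependence on $C_0$ through $\delta$ is a fair observation (the paper's proof has the same feature).
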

\begin{proof}
For a small positive constant $\delta$, define
$$g(s)=C_0+C_0\delta s^{1+\theta}-s.$$
Then $g$ satisfies
$$g(0)>0,\quad g(\infty)>0,\quad g'(s)=C_0 \delta(1+\theta)s^{\theta}-1.$$
Thus $g$ has exactly two distinct positive zeros if  $\delta$ is sufficiently small (to ensure that $g(s)$ takes negative values).

%We do not need the following argument!!
\iffalse
In fact if we solve
$g'(s_0)=0$ then $$s_0^{\theta}=\frac{1}{C\delta(1+\theta)},$$
then we want $g(s_0)<0$, this is
\begin{align*}
  C+C\delta s_0^{1+\theta}<s_0,
  \\
  \delta<(\frac{\theta}{1+\theta})^{\theta}\frac{1}{C^{1+\theta}(1+\theta)}.
\end{align*}
\fi
Since $a\in L^{p_1}$, we can choose a finite sequence $\{t_i\}_{i=0}^{n}$ such that
$$t_0=0,\quad t_i<t_{i+1},\quad t_n=\infty,\quad \|a(t)\|_{L^{p_1}_{([t_i,t_{i+1}])}}\leq \delta,\quad n\leq \delta^{-1} \|a\|_{L^{p_1}}+1.$$
By the assumption on $f$, for any $t\in[t_i,t_{i+1}]$ we have
$$\|f(t)\|_{L^{p_2}_{([t_i,t])} }\leq C_0+C_0\delta
\|f(t)\|^{1+\theta}_{L^{p_2}_{([t_i,t])}},$$
from which we conclude that $\|f(t,x)\|_{L^{p_1}_{([t_i,t])}}$ can not exceed the first zero of $g(s)=0$. Hence it is bounded. Now the conclusion of the Lemma follows by summing up finite times.
\end{proof}

Based on this lemma, we prove another refined version.
\begin{Lem}
\label{main}
Let $G(t)\geq 0$, $f(t,x)$, $a(t,x)$ be continuous functions with $t\in [0, \infty)$, $x\in \mathbb{R}^3$. Assume that $a\in L^{k}_{t}L_x^{l}$ for some pair $(k, l)$ such that  $\frac{1}{k}+\frac{3}{l}=2$, $1<k<2<l<3$. Let $(k', l')$ be the dual pair $\frac{1}{k}+\frac{1}{k'}=1$, $\frac{1}{l}+\frac{1}{l'}=\frac{1}{2}$.
  If for some  nonnegative constants $C_0$, $\delta$, $q$
\begin{equation*}
  \sup_{t'\in[0,t]}G(t')+\|f\|_{L^{k'}_{([0,t])}L_x^{l'}}\leq C_0\Big(G(0)+\delta t^{q}+\|af\|_{L^{1}_{([0, t])}L_x^{2}}\Big)
\end{equation*}
for all $t\geq 0$. Then there exists a constant $C_1$ depending only on $C_0$ and the norm $\|a\|_{L_t^k L_x^l}$ such that
$$\sup_{t'\in[0,t]}G(t')+\|f\|_{L^{k'}_{([0,t])}L_x^{l'}}\leq C_1(G(0)+\delta t^{q}),\quad \forall t\geq 0.$$

\end{Lem}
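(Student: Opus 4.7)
The plan is to combine Hölder's inequality in mixed Lebesgue norms with the partition-and-iterate scheme from Lemma \ref{main1}, adapted to the fact that the hypothesis here only sees time intervals anchored at $0$. Setting $F(s):=\sup_{t'\in[0,s]}G(t')+\|f\|_{L^{k'}_{([0,s])}L_x^{l'}}$, the hypothesis reads $F(t)\leq C_0\bigl(G(0)+\delta t^{q}+\|af\|_{L^{1}_{([0,t])}L_x^{2}}\bigr)$. The first step is to observe that Hölder in space (using $\frac{1}{l}+\frac{1}{l'}=\frac{1}{2}$) followed by Hölder in time yields, for any $0\leq t_1\leq t_2$,
\[
\|af\|_{L^1_{([t_1,t_2])}L_x^2}\leq \|a\|_{L^k_{([t_1,t_2])}L_x^l}\,\|f\|_{L^{k'}_{([t_1,t_2])}L_x^{l'}},
\]
which converts the apparently nonlinear forcing term into a linear functional of $F$.

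Next, exploiting the integrability $a\in L^k_tL_x^l$, I would choose a finite partition $0=t_0<t_1<\cdots<t_n=\infty$ with $\|a\|_{L^k_{([t_i,t_{i+1}])}L_x^l}\leq \delta_0:=1/(2C_0)$, where $n$ is controlled by $C_0$ and $\|a\|_{L^k_tL_x^l}$ alone. For $t\in[t_i,t_{i+1}]$, splitting $[0,t]$ into the pieces $[t_j,t_{j+1}]$ ($j<i$) and the final $[t_i,t]$ and applying the Hölder estimate above on each piece, together with the monotone bound $\|f\|_{L^{k'}_{([t_j,t_{j+1}])}L_x^{l'}}\leq F(t_{j+1})$, gives
\[
\|af\|_{L^1_{([0,t])}L_x^2}\leq \delta_0\Bigl(\sum_{j=0}^{i-1}F(t_{j+1})+F(t)\Bigr).
\]
Substituting into the hypothesis and absorbing the resulting $C_0\delta_0 F(t)=\tfrac12 F(t)$ to the left-hand side produces the key recursion
\[
F(t)\leq 2C_0 G(0)+2C_0\delta t^{q}+\sum_{j=0}^{i-1}F(t_{j+1}),\qquad t\in[t_i,t_{i+1}].
\]

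The argument would then be closed by a straightforward discrete induction on $i$: evaluating at $t=t_{i+1}$ and summing the resulting geometric series yields $F(t_{i+1})\leq 2^{i+1}C_0\bigl(G(0)+\delta t_{i+1}^{q}\bigr)$, and since $i\leq n-1$ with $n$ universally bounded, one obtains $F(t)\leq C_1(G(0)+\delta t^{q})$ on $[0,\infty)$ with $C_1$ depending only on $C_0$ and $\|a\|_{L^k_tL_x^l}$.

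The only real subtlety compared with Lemma \ref{main1} is that the hypothesis is anchored at $t=0$ rather than holding on arbitrary sub-intervals, so one cannot apply the smallness argument independently on each piece; instead the earlier intervals contribute an accumulated error that must be carried forward, and the doubling at each iteration is what produces the exponential factor $2^n$ in the final constant. The forcing term $\delta t^{q}$ plays no essential role beyond being monotone in $t$, so no extra care is needed there.
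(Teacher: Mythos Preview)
Your proposal is correct and follows essentially the same approach as the paper: partition $[0,\infty)$ into finitely many intervals on which $\|a\|_{L^k_tL^l_x}\le 1/(2C_0)$, apply H\"older in space and time to turn $\|af\|_{L^1L^2}$ into $\|a\|\,\|f\|$, absorb the small piece, and close by a finite induction on the partition with a constant growing exponentially in $n$. The only cosmetic difference is that the paper splits $[0,t_k]$ into just two pieces $[0,t_{k-1}]\cup[t_{k-1},t_k]$ (using the full norm $\|a\|_{L^k_tL^l_x}$ on the first) rather than into all subintervals, which leads to the slightly different recursion $D_k=2(C_0+\|a\|D_{k-1})+D_{k-1}$ in place of your $F(t_{i+1})\le 2^{i+1}C_0(\cdots)$; both produce constants depending only on $C_0$ and $\|a\|$.
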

\begin{proof}
 Similar to the proof of the previous lemma, we can find a finite sequence  $\{t_i\}_{i=0}^{n}$ such that
$$t_0=0,\quad t_i<t_{i+1},\quad t_n=\infty,\quad \|a(t,x)\|_{L^{k}_{([t_i,t_{i+1}])}L_x^{l}}\leq \frac{1}{2C_0}. $$
Here without loss of generality, we may assume $C_0\neq 0$.
We prove by induction that
\begin{align}
\label{eq:induct}
\sup_{t'\in[0,t_k]}G(t')+\|f\|_{L^{k'}_{([0,t_k])}L_x^{l'}}\leq D_k(G(0)+\delta t_k^{q}),\quad 0\leq k\leq n
\end{align}
For some constant $D_k$ relying only on the norm $\|a\|_{L_t^k L_x^l}$ and $C_0$.
The case when $k=0$ is trivial. Assume that the above estimate holds for $k-1$. In view of the assumption, we derive that
\begin{align*}
&\sup_{t'\in[0, t_k]}G(t')+\|f\|_{L^{k'}_{([t_{k-1},t_k])}L_x^{l'}}\\
&\leq C_0(G(0)+\delta t_k^{q}+\|af\|_{L^{1}_{([0, t])}L_x^{2}})\\
&\leq C_0(G(0)+\delta t_k^{q}+\|a\|_{L_t^{k}L_x^{l}} \|f\|_{L_t^{k'}([0, t_{k-1}])L_x^{l'}}+ \|a\|_{L_t^{k}([t_{k-1}, t_k])L_x^{l}} \|f\|_{L_t^{k'}([ t_{k-1}, t_k])L_x^{l'}})\\
&\leq \frac{1}{2}\|f\|_{L_t^{k'}([ t_{k-1}, t_k])L_x^{l'}}+C_0G(0)+C_0 \delta t_k^q+\|a\|_{L_t^k L_x^l} D_{k-1}(G(0)+\delta t_{k-1}^{q}),
\end{align*}
from which we conclude that \eqref{eq:induct} holds for $k$ by choosing
\begin{align*}
D_k=2(C_0+\|a\|_{L_t^k L_x^l}D_{k-1}) +D_{k-1}.
\end{align*}
Since $n$ relies only on $\|a\|_{L_t^k L_x^l}$, the lemma then follows by noting that \eqref{eq:induct} also holds when $t_n$ is replaced with $t>t_{n-1}$.
\end{proof}

Our method to control the nonlinearity heavily relies on the global Strichartz estimates obtained by
Marzuola-Metcalfe-Tataru-Tohaneanu in \cite{Tataru10:Strich:Schwarz}. We call the pair  $(p_1, q_1)$ is admissible if
\[
\frac{1}{p_1}+\frac{3}{q_1}=\frac{1}{2},\quad 6\leq q_1 <\infty.
\]
For a region $\cD$ and a scalar field $\psi $ on $\mathcal{M}$, define the norm
\begin{align*}
\|\psi \|_{LE(\cD)}=\iint_{\cD}
  \frac{1}{r^2}|\partial_r\psi|^2
  +\Big(1-\ln\Big|1-\frac{3M}{r}\Big|\Big)^{-2}
  \left(\frac{1}{r^2}|\partial_{\widetilde{v}}\psi|^2
  +\frac{1}{r}|\slashed\nabla\psi|^2\right)
  +\frac{1}{r^4}|\psi|^2\dvol.
\end{align*}
This is the log-loss integrated local energy.
We recall the log-loss integrated local energy estimate and Strichartz estimate proved in \cite{Tataru10:Strich:Schwarz} for linear waves on Schwarzschild spacetimes. Let
$$\tilde{\Ga}_{\tv_1, \tv_2}=\{r=r_0\}\cap \{\tv_1\leq \tv\leq \tv_2\}. $$
\begin{Prop}
\label{prop:IS:v}
 Let $(p_1, q_1)$ be admissible. For all $\tv_2\geq \tv_1$, it holds that
\begin{equation*}
    \sup_{\widetilde{v}\in [\widetilde{v}_1,\widetilde{v}_2]}E[\psi](\widetilde{\Sigma}_{\widetilde{v}}\cup \tilde{\Ga}_{\tv_1, \tv})
    +\|\psi\|_{LE(\tcD_{\tv_1, \tv_2})}+\|\psi\|^2_{L_{\tv}^{p_1}L_{x}^{q_1}(\tcD_{\tv_1, \tv_2})} \lesssim E[\psi](\widetilde{\Sigma}_{\widetilde{v}_1})
    +\|\Box_g\psi\|^2_{L_{\tv}^1L^2_x(\tcD_{\tv_1, \tv_2})}.
\end{equation*}
\end{Prop}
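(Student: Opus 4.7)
The proposition is essentially the main estimate of Marzuola--Metcalfe--Tataru--Tohaneanu \cite{Tataru10:Strich:Schwarz}, restated with respect to the foliation $\{\widetilde{\Si}_{\tv}\}$ used in this paper (instead of constant-$t$ slices) and with the inclusion of a definite-sign boundary flux through $\tilde{\Ga}_{\tv_1,\tv}=\{r=r_0\}$. The plan is therefore to decompose the proof into three standard ingredients -- energy, log-loss integrated local energy, and Strichartz -- show that each transports from the MMTT setup to ours, and then combine them by bootstrap.

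First I would treat the flux term. Apply the energy identity \eqref{eq:energy:id} with the Killing multiplier $X=\pa_{\tv}$, $q=0$, $m=0$, $k=0$, on the region $\tcD_{\tv_1,\tv}$. Because $\pa_{\tv}$ is Killing and timelike in $\{r\geq r_0\}$, the bulk term $Q[\psi,X,0,0]$ vanishes. The boundary integral splits as a flux through $\widetilde{\Si}_{\tv_1}$, through $\widetilde{\Si}_{\tv}$, and through $\tilde{\Ga}_{\tv_1,\tv}$. Since $r_0<2M$, the hypersurface $\{r=r_0\}$ is spacelike and $\pa_r$ is timelike there, so the contribution on $\tilde{\Ga}_{\tv_1,\tv}$ has a definite sign and controls $E[\psi](\tilde{\Ga}_{\tv_1,\tv})$. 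Taking the supremum in $\tv\in[\tv_1,\tv_2]$ and absorbing the source via Cauchy--Schwarz gives
\[
\sup_{\tv\in[\tv_1,\tv_2]}E[\psi]\bigl(\widetilde{\Si}_{\tv}\cup\tilde{\Ga}_{\tv_1,\tv}\bigr)\les E[\psi](\widetilde{\Si}_{\tv_1})+\|\Box_g\psi\|^2_{L^1_{\tv}L^2_x(\tcD_{\tv_1,\tv_2})}+\varepsilon\sup_{\tv}E[\psi](\widetilde{\Si}_{\tv}),
\]
and the last term is absorbed into the left.

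Next I would handle the $LE$-norm. For this I take the explicit multiplier $X=f(r^*)\pa_{r^*}$ together with the auxiliary function $q$ and one-form $m$ constructed in \cite{Tataru10:Strich:Schwarz}. The key algebraic property of their construction is that $Q[\psi,X,q,m]$ dominates pointwise the integrand defining $\|\psi\|_{LE}$, with the crucial log-loss weight $(1-\ln|1-3M/r|)^{-2}$ appearing exactly where the quadratic form degenerates at the photon sphere $r=3M$. Substituting into \eqref{eq:energy:id} on $\tcD_{\tv_1,\tv_2}$, the boundary terms are controlled by the previously estimated fluxes through $\widetilde{\Si}_{\tv_j}$ and $\tilde{\Ga}_{\tv_1,\tv_2}$ (using that $\tv=t$ for $r\geq \tfrac52 M$, so the $X$-flux on $\widetilde{\Si}_{\tv}$ through the asymptotic region is the usual one); the source contributes $\|\Box_g\psi\|_{L^1_{\tv}L^2_x}\|X\psi+q\psi\|_{L^\infty_{\tv}L^2_x(\text{loc})}$, which is absorbed by the energy bound from Step 1.

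Finally I would obtain the Strichartz piece $\|\psi\|^2_{L^{p_1}_{\tv}L^{q_1}_x}$. Here I would not reprove anything: I would invoke the global-in-time Strichartz estimate of \cite{Tataru10:Strich:Schwarz}, which is precisely this bound in terms of initial energy and the $L^1L^2$ norm of $\Box_g\psi$, and observe that the foliation change is harmless because $\tv$ agrees with $t$ away from the horizon (where the Strichartz norm actually lives, since the $L^{q_1}_x$ integral is over a spacelike slice of $\mathcal{M}$) and because the inner boundary flux through $\tilde{\Ga}_{\tv_1,\tv}$ only improves the estimate. The combined estimate then follows by summing the three contributions and applying a standard continuity/bootstrap argument in $\tv_2$.

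The main obstacle, conceptually, is Step 3: the Strichartz estimate relies on a parametrix construction that handles the trapped set at $r=3M$, and this cannot be done by elementary vector-field techniques alone. My proof proposal simply cites \cite{Tataru10:Strich:Schwarz} for this input and focuses on verifying compatibility with the $\widetilde{\Si}_{\tv}$ foliation and with the extra boundary $\tilde{\Ga}_{\tv_1,\tv}$, which as noted above works out cleanly because $\tv$ coincides with $t$ in the asymptotic region and because the spacelike character of $\{r=r_0\}$ for $r_0<2M$ makes the additional flux a friend rather than an enemy.
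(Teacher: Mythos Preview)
Your Step 1 contains a genuine error. You assert that $\pa_{\tv}$ is timelike in $\{r\geq r_0\}$, but since $g(\pa_{\tv},\pa_{\tv})=-(1-\mu)$ and $r_0<2M$, the vector field $\pa_{\tv}$ is \emph{spacelike} for $r_0\leq r<2M$ (and null on the horizon). Consequently neither the $\pa_{\tv}$-flux through the spacelike hypersurface $\tilde{\Ga}_{\tv_1,\tv}=\{r=r_0\}$ nor the $\pa_{\tv}$-energy on $\widetilde{\Si}_{\tv}\cap\{r<2M\}$ has a definite sign from the dominant energy condition alone; your argument that ``$\{r=r_0\}$ is spacelike $\Rightarrow$ the flux has a sign'' only works when the multiplier is causal. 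The standard remedy is a red-shift correction near the horizon (exactly the $\delta X_2$ piece of the MMTT multiplier you invoke in Step 2), so Step 1 as written fails and must be merged with Step 2 or supplemented by a red-shift vector field.

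Setting that aside, your route is more elaborate than the paper's. The paper does not redo any vector-field computations here: it simply cites Theorems~1.3 and~1.4 of \cite{Tataru10:Strich:Schwarz} for the homogeneous case $\Box_g\psi=0$ (with Hardy's inequality supplying the zeroth-order term in $E[\psi]$), and then passes to the inhomogeneous case by Duhamel's formula together with the approximate parametrix of Proposition~4.12 in \cite{Tataru10:Strich:Schwarz}. Your decomposition into energy/$LE$/Strichartz pieces is of course how MMTT itself is proved, but for the purposes of this paper the proposition is a black-box import, and the only new content is the bookkeeping that $\tv=t$ for $r\geq\tfrac52 M$ so the foliations match in the asymptotic region---a point you correctly identify.
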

\begin{proof}
The case when $\Box_{g}\psi=0$ is given by Theorem 1.3 and Theorem 1.4 in \cite{Tataru10:Strich:Schwarz}  (the integral of $ \psi^2$ follows from Hardy's inequality). The above local version then follows by using Duhamel's formula together with the approximate paramatrix constructed in Proposition 4.12 in  \cite{Tataru10:Strich:Schwarz}.
\end{proof}
Now we need to adapt the above log-loss integrated local energy estimate and Strichartz estimate to the smaller region bounded by $\widetilde{\Si}_{\tv_{1}}$ and $\Si_{u_0}$
$$\mathcal{D}_{u_0}^{\tilde{v}_1}=\{r\geq r_0,u\geq u_0\geq -\f12 R^*,\quad \widetilde{v}_0{ =2u_0+R^*}<\widetilde{v}<\widetilde{v}_1\}$$
\begin{Prop}
\label{L1L2foru}
Assume that $(p_1, q_1)$ is admissible. Then
  \begin{equation*}
    \sup_{u\geq u_0}E[\psi](\Sigma_{u}\cap \mathcal{D}_{u_0}^{\tv_1})
    +\|\psi\|_{LE(\mathcal{D}_{u_0}^{\tv_1})}^2
    +\|\psi\|^2_{L_{\widetilde{v}}^{p_1}L_{x}^{q_1}(\mathcal{D}_{u_0}^{\tv_1})} \lesssim E[\psi](\Sigma_{u_0})+\|\Box_g\psi\|^2_{L^1_{\widetilde{v}}L^2_x(\mathcal{D}_{u_0}^{\tv_1})}.
\end{equation*}
\end{Prop}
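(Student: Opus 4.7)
My plan is to adapt the argument of Proposition \ref{prop:IS:v} to the domain $\mathcal{D}_{u_0}^{\tv_1}$, whose past boundary is now the mixed spacelike/null hypersurface $\Sigma_{u_0}=(\tSi_{\tv_0}\cap\{r_0\leq r\leq R\})\cup \mathcal{H}_{u_0}$ rather than a full constant-$\tv$ slice. The proof is a direct rerunning of the \cite{Tataru10:Strich:Schwarz} multiplier identity, with one modification at the null portion of the boundary, together with the same Duhamel/parametrix reduction used in Proposition \ref{prop:IS:v}.

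First I would apply the energy identity \eqref{eq:energy:id} on $\mathcal{D}_{u_0}^{\tv_1}$ with the vector field $X=f(r^*)\partial_{r^*}+\alpha\partial_{\tv}$, where $f$ and the auxiliary $q, m$ are as in the construction of \cite{Tataru10:Strich:Schwarz} and $\alpha\geq \|f\|_{L^\infty}$ is a large fixed constant. The bulk term $Q[\psi,X,q,m]$ remains the Morawetz quantity used there, non-negative modulo the log-loss at $r=3M$, producing the integrated local energy density on the LHS. On the portion of $\tSi_{\tv_0}$ with $r\leq R$ the flux is identical to that on $\Sigma_{u_0}\cap\{r\leq R\}\subset\Sigma_{u_0}$; on $\{r=r_0\}$ non-negativity follows from the red-shift as in \cite{Tataru10:Strich:Schwarz}; on $\tSi_{\tv_1}$ the flux is non-negative and provides the energy term in the LHS. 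On the new null piece $\mathcal{H}_{u_0}$ a direct computation gives
\begin{equation*}
T(X,L)=\tfrac{\alpha+f(r^*)}{2}|L\psi|^2+\tfrac{\alpha-f(r^*)}{2}(1-\mu)|\nabb\psi|^2,
\end{equation*}
which for $\alpha\geq\|f\|_{L^\infty}$ is non-negative and pointwise dominated by the integrand of $E[\psi](\mathcal{H}_{u_0})\leq E[\psi](\Sigma_{u_0})$. The lower-order $q\psi$ and $m\psi^2$ boundary contributions on $\mathcal{H}_{u_0}$ are absorbed via Hardy's inequality, and the source term $\int\Box_g\psi\cdot(X\psi+q\psi)\,d\vol$ is controlled by Cauchy-Schwarz and absorbed into a small fraction of the LHS.

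To promote the energy flux bound on $\tSi_{\tv_1}$ to the uniform bound $\sup_{u\geq u_0}E[\psi](\Sigma_u\cap\mathcal{D}_{u_0}^{\tv_1})$, I would, for each $u\geq u_0$, apply the $\partial_{\tv}$-energy identity on the sub-region $\mathcal{D}_{u_0}^{\tv_1}\cap\{u'\leq u\}$ bounded by $\Sigma_{u_0}$, $\Sigma_u\cap\mathcal{D}_{u_0}^{\tv_1}$ and $\tSi_{\tv_1}\cap\{u'\leq u\}$. Since $\partial_{\tv}$ is Killing, the bulk contribution is purely the source, and Cauchy-Schwarz yields $E[\psi](\Sigma_u\cap\mathcal{D}_{u_0}^{\tv_1})\lesssim E[\psi](\Sigma_{u_0})+\|\Box_g\psi\|^2_{L^1_{\tv}L^2_x(\mathcal{D}_{u_0}^{\tv_1})}$. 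The Strichartz bound $\|\psi\|^2_{L^{p_1}_{\tv}L^{q_1}_x(\mathcal{D}_{u_0}^{\tv_1})}$ is then obtained, exactly as in Proposition \ref{prop:IS:v}, by Duhamel's formula together with the approximate parametrix of \cite[Proposition 4.12]{Tataru10:Strich:Schwarz}, which is local in time and needs only data on a partial Cauchy surface; taking $\Sigma_{u_0}$ in place of $\tSi_{\tv_0}$ is compatible with the parametrix construction.

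The main technical obstacle is achieving a non-negative flux on the null hypersurface $\mathcal{H}_{u_0}$. The Morawetz piece $f(r^*)\partial_{r^*}$ alone produces the mixed-sign contribution $\tfrac{f}{2}(|L\psi|^2-(1-\mu)|\nabb\psi|^2)$, reflecting the non-causal nature of $\partial_{r^*}$. Positivity is restored by augmenting with a sufficiently large multiple of the timelike Killing field $\partial_{\tv}$; because $\partial_{\tv}$ has vanishing deformation tensor, this modification does not alter the Morawetz bulk identity and leaves every other step of the \cite{Tataru10:Strich:Schwarz} argument unchanged.
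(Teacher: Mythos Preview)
Your approach is genuinely different from the paper's and contains a real gap in the Strichartz step. The paper never reruns the multiplier identity or the parametrix construction on the mixed domain $\mathcal{D}_{u_0}^{\tv_1}$. Instead it reduces everything to Proposition \ref{prop:IS:v} via finite speed of propagation and an extension argument: one splits $\psi=\psi_1+\psi_2$ with $\Box_g\psi_2=\chi_{\mathcal{D}_{u_0}^{\tv_1}}\Box_g\psi$ and trivial data on $\tSi_{\tv_0}$, so Proposition \ref{prop:IS:v} applies directly to $\psi_2$ on the full slab $\tcD_{\tv_0,\tv_1}$; for the free wave $\psi_1$ (which equals $\psi$ on $\Sigma_{u_0}$ by domain of dependence), one builds an auxiliary free wave $\psi^{\tv_1}$ by prescribing data on the \emph{future} slice $\tSi_{\tv_1}$ that agree with $\psi_1$ on $\tSi_{\tv_1}\cap\mathcal{D}_{u_0}^{\tv_1}\cup\{r=r_0\}$ and are extended outside with comparable energy, then solves \emph{backward} and invokes Proposition \ref{prop:IS:v} again. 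Finite speed of propagation forces $\psi^{\tv_1}=\psi_1$ throughout $\mathcal{D}_{u_0}^{\tv_1}$, and a standard energy inequality bounds $E[\psi^{\tv_1}](\tSi_{\tv_0})$ by $E[\psi](\Sigma_{u_0})$.

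The problem with your route is the sentence ``taking $\Sigma_{u_0}$ in place of $\tSi_{\tv_0}$ is compatible with the parametrix construction.'' The approximate parametrix of \cite[Proposition 4.12]{Tataru10:Strich:Schwarz} is built specifically for Cauchy data on a constant-$\tv$ spacelike slice; it is a half-wave type construction in the $\tv$-time variable. Transplanting it to data posed on $\Sigma_{u_0}$, which contains the outgoing null piece $\mathcal{H}_{u_0}$, amounts to proving a characteristic-initial-value Strichartz estimate, and that is not what the cited proposition gives you. This is not a cosmetic issue: the dispersive mechanism underlying the parametrix is tied to the spacelike foliation, and you would need a genuinely new argument. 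The paper's backward-extension trick is precisely designed to avoid ever posing data on the null piece, so that Proposition \ref{prop:IS:v} can be used as a black box. Your multiplier computation for the $LE$ and energy-flux terms is plausible (modulo the fact that the log-loss weight in $\|\cdot\|_{LE}$ already requires more than the bare Morawetz identity), but without a valid Strichartz argument the proposition is not proved.
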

\begin{proof}
The bound for $E[\psi](\Sigma_{u}\cap \mathcal{D}_{u_0}^{\tv_1})$
 is the standard energy estimate %{(similar to Proposition \ref{mainenergy})}
 . It remains to adapt the above log-loss integrated local energy estimates and Strichartz estimate to region bounded by null hypersurfaces. Split $\psi$ into two parts
\begin{equation*}
  \psi=\psi_1+\psi_2,
\end{equation*}
in which
$\psi_1$, $\psi_2$ solve the linear wave equation
\begin{equation*}
  \Box_g\psi_1=0, \quad \Box_g\psi_2=\chi_{\cD_{u_0}^{\tv_1}}\Box_g\psi
\end{equation*}
such that
$\psi_2$ vanishes on the initial hypersurface $\widetilde{\Si}_{\tv_0}$.
 Here $\chi_{\cD_{u_0}^{\tv_1}}$ is the characteristic function of the region  $\cD_{u_0}^{\tv_1}$.
In view of finite speed of propagation,  $\psi_2$ vanishes in $(\cD_{u_0}^{\tv_1})^c\cap \widetilde{\cD}_{\tv_0, \tv_1}$, thus $\psi_1$ coincides $\psi$ on $\Si_{u_0}\cap{\cD_{u_0}^{\tv_1}}$ (up to derivatives if necessary). By using the previous Proposition \ref{prop:IS:v}, we in particular conclude that
\begin{equation*}
\|\psi_2\|_{LE(\mathcal{D}_{u_0}^{\tv_1})}^2
+\|\psi_2\|^2_{L_{\widetilde{v}}^{p_1}L_{x}^{q_1}(\mathcal{D}_{u_0}^{\tv_1})} \lesssim \|\Box_g\psi\|^2_{L^1_{\widetilde{v}}L^2_x(\mathcal{D}_{u_0}^{\tv_1})},
\end{equation*}
which in particularly implies that the Proposition is reduced to controlling  $\psi_1$.

Consider another linear wave $\psi^{\widetilde{v}_1}$ such that
$$\Box_g\psi^{\widetilde{v}_1}=0$$
and $\psi^{\tilde{v}_1}$ coincide with $\psi_1$ on $\cD_{u_0}^{\tv_1}\cap (\widetilde{\Si}_{\tv_1}\cup \{r=r_0\})$
(that is the boundary $\mathcal{B}_4$ in the following figure).
\begin{center}
 \begin{tikzpicture}
  \draw (0,0) -- (2,0)--(6, 4);
  \fill[gray!30] (0, 0)--(2, 0)--(4, 2)--(0, 2)--cycle;
  \draw (0,2) -- (6,2) node[right]{$\tv_1$};
  \fill [black] (4, 2) node[below]{$S$} circle (2pt);
 \draw[dotted] (2, 0)--(6, 0) node[right]{$\tv_0$};
 \node at (-0.2, 1) {$r_0$};
 \node at (1.5, 2.2) {$\mathcal{B}_4$};
 \node at (5, 3.6) {$\Si_{u_0}$};
 \node at (2.5, 1) {$\mathcal{B}_3$};
  \node at (1, -0.2) {$\mathcal{B}_1$};
  \node at (4, -0.2) {$\mathcal{B}_2$};
   \node at (5, 2.2) {$\mathcal{B}_5$};
\end{tikzpicture}
\end{center}

On the exterior boundary $\mathcal{B}_5= \widetilde{\Si}_{\tv_1}\backslash \mathcal{B}_{4}$, we extend the data for $\psi^{\widetilde{v}_1}$ such that
$$E[\psi^{\widetilde{v}_1}](\widetilde{\Sigma}_{\widetilde{v}_1})\leq 10 E[\psi^{\widetilde{v}_1}](\mathcal{B}_4).$$
By solving the equation backward and in view of the above integrated local energy estimate and Strichartz estimate of Proposition \ref{prop:IS:v}, we derive that
\begin{equation*}
  \sup_{u\geq u_0}E[\psi^{\widetilde{v}_1}](\Sigma_{u}\cap \tcD_{ \tv_0, \tv_1})+\|\psi^{\widetilde{v}_1}\|_{LE( \tcD_{\tv_0, \tv_1})}^2
+\|\psi^{\widetilde{v}_1}\|^2_{L_{\widetilde{v}}^{p_1}L_{x}^{q_1}
(\tcD_{\tv_0, \tv_1})} \lesssim
E[\psi^{\widetilde{v}_1}](\widetilde{\Sigma}_{\widetilde{v}_0}).
\end{equation*}
%Here we took $ \widetilde{\Sigma}_{\widetilde{v}_1} \cup \tilde{\Ga}_{\tv_1, \tv_2}$ as the initial Cauchy hypersurface.
By the above extension and standard energy estimate we have
\begin{align*}
&E[\psi^{\widetilde{v}_1}](\widetilde{\Sigma}_{\widetilde{v}_0})\les E[\psi^{\widetilde{v}_1}](\mathcal{B}_1\cup \mathcal{B}_3)+E[\psi^{\widetilde{v}_1}](\mathcal{B}_5)\les E[\psi^{\widetilde{v}_1}](\mathcal{B}_1\cup \mathcal{B}_3)+E[\psi^{\widetilde{v}_1}](\mathcal{B}_4)\\ \les& E[\psi^{\widetilde{v}_1}](\mathcal{B}_1\cup \mathcal{B}_3)=E[\psi_1](\Si_{u_0}\cap{\cD_{u_0}^{\tv_1}})\leq E[\psi](\Si_{u_0}).
\end{align*}
Here we used the finite speed of propagation to conclude that $\psi^{\widetilde{v}_1}=\psi_1$ in $\cD_{u_0}^{\tv_1}$, thus
%we in particular conclude that
\begin{equation*}
  \sup_{u\geq u_0}E[\psi_1](\Sigma_{u}\cap \mathcal{D}_{u_0}^{\tv_1})+\|\psi_1\|_{LE(\cD_{u_0}^{\tv_1})}^2
+\|\psi_1\|^2_{L_{\widetilde{v}}^{p_1}L_{x}^{q_1}
(\cD_{u_0}^{\tv_1})} \lesssim
E[\psi](\Sigma_{u_0}).
\end{equation*}
The Proposition then follows.
\end{proof}
By letting $\widetilde{v}_1\to+\infty$, we can replace the region $\mathcal{D}_{u_0}^{\tv_1}$ by the future of $\widetilde{\Si}_{\tv_0}$ or $\Si_{u_0}$. Moreover the above Proposition together with Lemma \ref{main1} and Lemma \ref{main} leads to the following Corollary.
\begin{Cor}\label{L1L2}
Let $(p_1, q_1)$ be admissible and $(k, l)$ be the dual pair $\frac{1}{k}+\frac{3}{l}=2,$ $1<k<2<l<3$. Consider the linear wave equation $\Box_g\psi=a\psi$ with coefficient $a \in L_{\widetilde{v}}^{k} L_{x}^{l}(\mathcal{M})$. Then we have
  \begin{equation*}
  \begin{split}
    &\sup_{u\geq u_0}E[\psi](\Sigma_{u})
    +\|\psi\|_{LE(\mathcal{D}_{u_0})}^2
    +\|\psi\|^2_{L_{\widetilde{v}}^{p_1}L_{x}^{q_1}(\cD_{u_0})}
    \lesssim E[\psi](\Sigma_{u_0}),
    \\&
    \sup_{\widetilde{v}\geq \widetilde{v}_0}E[\psi](\widetilde{\Sigma}_{\widetilde{v}})
    +\|\psi\|_{LE(\tcD_{\tv_0})}^2
    +\|\psi\|^2_{L_{\widetilde{v}}^{p_1}L_{x}^{q_1}(\tcD_{\tv_0})}
    \lesssim E[\psi](\widetilde{\Sigma}_{\widetilde{v}_0}).
    \end{split}
\end{equation*}
Here $\cD_{u_0}$ is short for $\cD_{u_0}^{\infty}$ (similarly for $\tcD_{\tv_0}$) and the implicit constant also relies on the norm $\|a\|_{L_{\widetilde{v}}^{k} L_{x}^{l}(\mathcal{M})}$
\end{Cor}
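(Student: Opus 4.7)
The key preliminary observation is that the dual pair $(k', l')$ is itself admissible: a direct computation gives
\[
\frac{1}{k'}+\frac{3}{l'}=\Bigl(1-\frac{1}{k}\Bigr)+3\Bigl(\frac{1}{2}-\frac{1}{l}\Bigr)=\frac{5}{2}-\Bigl(\frac{1}{k}+\frac{3}{l}\Bigr)=\frac{1}{2},
\]
and the constraints $1<k<2$, $2<l<3$ translate to $k'>2$ and $6<l'<\infty$. The plan is therefore to apply Proposition \ref{L1L2foru} (respectively Proposition \ref{prop:IS:v}) with the specific admissible exponents $(p_1,q_1)=(k',l')$, then to convert the inhomogeneous term $\|a\psi\|_{L^1_{\tv}L^2_x}$ into a bootstrap expression in $\|\psi\|_{L^{k'}_{\tv}L^{l'}_x}$ via H\"older in space followed by H\"older in time:
\[
\|a\psi\|_{L^1_{\tv}L^2_x(\cD_{u_0}^{\tv_1})}\les \|a\|_{L^k_{\tv}L^l_x(\cD_{u_0}^{\tv_1})}\,\|\psi\|_{L^{k'}_{\tv}L^{l'}_x(\cD_{u_0}^{\tv_1})}.
\]

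For the first estimate, I would apply Proposition \ref{L1L2foru} on the truncated region $\cD_{u_0}^{\tv_1}$ to the equation $\Box_g\psi=a\psi$ and, after the H\"older bound above, take square roots to obtain
\[
\sup_{u\geq u_0}\sqrt{E[\psi](\Sigma_u\cap\cD_{u_0}^{\tv_1})}+\|\psi\|_{L^{k'}_{\tv}L^{l'}_x(\cD_{u_0}^{\tv_1})}\les \sqrt{E[\psi](\Sigma_{u_0})}+\|a\psi\|_{L^1_{\tv}L^2_x(\cD_{u_0}^{\tv_1})}
\]
valid for every $\tv_1\geq \tv_0:=2u_0+R^*$. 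This is precisely the input needed for Lemma \ref{main}. I would use $\tv$ as the time parameter (shifted so that $t=\tv-\tv_0$), take $f=\psi\cdot\chi_{\cD_{u_0}}$, and let $G(t)=\sup_{u\geq u_0}\sqrt{E[\psi](\Sigma_u\cap\cD_{u_0}^{\tv_0+t})}$, which is nondecreasing in $t$; the role of the initial datum $G(0)$ in the statement of Lemma \ref{main} is played by the constant $\sqrt{E[\psi](\Sigma_{u_0})}$, and $\delta=0$. (The fact that the bootstrap proof uses $G(0)$ only as an inert constant on the right-hand side makes this substitution legitimate.) Lemma \ref{main} then yields
\[
\sup_{u\geq u_0}\sqrt{E[\psi](\Sigma_u\cap\cD_{u_0}^{\tv_1})}+\|\psi\|_{L^{k'}_{\tv}L^{l'}_x(\cD_{u_0}^{\tv_1})}\les \sqrt{E[\psi](\Sigma_{u_0})}
\]
with an implicit constant depending only on $C_0$ and $\|a\|_{L^k_{\tv}L^l_x(\M)}$. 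Sending $\tv_1\to\infty$ (monotone convergence) gives the desired bounds on $E[\psi](\Sigma_u)$ and $\|\psi\|_{L^{k'}_{\tv}L^{l'}_x(\cD_{u_0})}$; the $LE$ term is then recovered by feeding these bounds back into Proposition \ref{L1L2foru} and using the H\"older inequality once more to control $\|a\psi\|_{L^1_{\tv}L^2_x}^2$ by $E[\psi](\Sigma_{u_0})$.

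The second estimate follows by the same argument, but with Proposition \ref{prop:IS:v} replacing Proposition \ref{L1L2foru} and with the simpler choice $G(t)=\sqrt{E[\psi](\tSi_{\tv_0+t})}$, since now both the energy flux and the Strichartz norm are naturally indexed by $\tv$. The main technical obstacle, and the reason the argument is packaged as Lemma \ref{main} rather than Lemma \ref{main1}, is that the reabsorption term is \emph{linear} in $\|\psi\|_{L^{k'}_{\tv}L^{l'}_x}$, so one cannot simply close a fixed-point inequality on a single interval; instead one must split $[\tv_0,\infty)$ into finitely many $\tv$-subintervals on which $\|a\|_{L^k_{\tv}L^l_x}\leq (2C_0)^{-1}$ and induct. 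The secondary subtlety in the first estimate is the mismatch between the $u$-foliation (for the energy flux) and the $\tv$-foliation (for the Strichartz norm); this is handled by the monotonicity of $G(t)$ in $t$ noted above, which allows the supremum in $u$ to be advanced in step with the time parameter $\tv$.
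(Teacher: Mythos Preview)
Your proposal is correct and follows exactly the paper's approach: apply Proposition~\ref{L1L2foru} (resp.\ Proposition~\ref{prop:IS:v}) to get the Strichartz--energy inequality with inhomogeneity $\|a\psi\|_{L^1_{\tv}L^2_x}$, then close via Lemma~\ref{main}. The paper's proof is a two-line sketch that omits the details you supply---the verification that $(k',l')$ is admissible, the precise choice of $G$ and the handling of $G(0)$ as an inert constant, and the final step of feeding the $L^{k'}_{\tv}L^{l'}_x$ bound back into Proposition~\ref{L1L2foru} to recover the $LE$ norm and general admissible $(p_1,q_1)$---so your write-up is in fact more complete than the original.
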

\begin{proof}
We only prove the first estimate and the second one follows in a similar way. In view of the above Proposition \ref{L1L2foru}, we derive that
  \begin{equation*}
    \|\psi\|^2_{L_{\widetilde{v}}^{p_1}L_{x}^{q_1}(\cD_{u_0}^{\tv_1})} \lesssim E[\psi](\Sigma_{u_0})+\|a\psi\|^2_{L^1_{\widetilde{v}}L^2_x (\cD_{u_0}^{\tv_1})}
\end{equation*}
for any $\widetilde{v}_1\geq \widetilde{v}_0$.  Since $a\in L_{\widetilde{v}}^{k} L_{x}^{l}(\mathcal{M})$, the Corollary then follows from Lemma \ref{main}.
\end{proof}

\section{Rough decay estimate for the scalar field $\phi$}
We first derive a rough decay estimate for the scalar field. This will be achieved by obtaining an integrated local energy decay estimate for the scalar field together with uniform energy bound for the derivatives of the solution.

\subsection{The integrated local energy decay estimate}
\label{sec:ILE}
Since we are considering large data problem, the asymptotic decay of the solution heavily relies on the decay of the potential energy. Similar to the case in the flat Minkowski space, a weak version of such decay is an integrated local energy estimate for the potential energy. The integrated local energy estimate for linear waves on black  hole spacetimes have been studied extensively in the past decades.

We will adapt the vector field method in  \cite{Tataru10:Strich:Schwarz} by Marzuola-Metcalfe-Tataru-Tohaneanu  for deriving the integrated local energy estimate for  linear waves to the nonlinear model \eqref{maineq} in this paper. Before we recall a key lemma   from \cite{Tataru10:Strich:Schwarz}, let's denote
\begin{equation*}
\begin{split}
\mathcal{W}[\psi,p]&=\frac{1}{r^2}|\partial_r\psi|^2
  +\left(1-\frac{3M}{r}\right)^2\left(\frac{1}{r^2}|\partial_{\widetilde{v}}\psi|^2
  +\frac{1}{r}|\slashed\nabla\psi|^2\right)
  +\frac{1}{r^4}|\psi|^2+\frac{p}{r}|\psi|^{p+1},
  \end{split}
\end{equation*}
We have
\begin{Lem}\label{Ta}
  There exist $K$-invariant smooth spherically symmetric vector field $X$ and 1-form $m(r)$ as well as a function $q(r)$ defined on $r\geq 2M$ such that in $(r,\tilde{v})$ coordinates { (here $K=\partial_{\tv}$)}
  \begin{align*}
  |X|\les 1, \quad |q(r)|\lesssim r^{-1},\quad |q'(r)|\lesssim r^{-2},\quad Q[\psi,X,q,m]\gtrsim \mathcal{W}[\psi, 0],\quad \langle m,dr\rangle |_{r=2M}>0.
  \end{align*}

\end{Lem}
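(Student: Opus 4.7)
The statement is essentially the multiplier construction of Marzuola--Metcalfe--Tataru--Tohaneanu adapted to produce the full weighted quadratic form $\mathcal{W}[\psi,0]$, and my plan is to follow their explicit construction. I would proceed in three stages. First, set up the ansatz $X=f(r^*)\,\partial_{r^*}$ with $f$ a smooth radial function, together with $q=q(r)$ and $m=m(r)\,dr$. Since $X$ and $q, m$ depend only on $(r,\omega)$ through $r$, both spherical symmetry and $K=\partial_{\tv}$-invariance are automatic. Compute $\pi_X=\frac12\mathcal{L}_X g$ in the $(t,r^*,\omega)$ coordinates: only the $(r^*,r^*)$ component (proportional to $f'(r^*)$) and the angular diagonal components (proportional to $(1-\mu)f/r$, since $\partial_{r^*}r=1-\mu$) are non-vanishing. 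This allows $T_{\alpha\beta}[\psi]\pi_X^{\alpha\beta}$ to be written explicitly as a linear combination of $|\partial_{r^*}\psi|^2$, $|\partial_t\psi|^2$ and $|\sl\nabla\psi|^2$ whose coefficients are computable in $f$, $f'$ and $\mu$.

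The second stage is to handle the trapping at the photon sphere. Inspection of the $|\sl\nabla\psi|^2$ coefficient shows that it factors through $(1-3M/r)$, and no choice of $f$ can make it strictly positive at $r=3M$. Following \cite{Tataru10:Strich:Schwarz}, one picks $f$ with $f(r^*)(r-3M)>0$ for $r\neq 3M$, vanishing simply at $r=3M$ (so the angular coefficient becomes proportional to $(1-3M/r)^2/r$, matching the first trapped piece of $\mathcal{W}[\psi,0]$), and logarithmically bounded so that $|X|\lesssim 1$ in the original coordinates. A compatible radial weight then produces the $r^{-2}|\partial_r\psi|^2$ contribution via the $q\,\partial^\alpha\psi\,\partial_\alpha\psi$ term by choosing $q(r)\sim r^{-1}$; the constraints $|q|\lesssim r^{-1}$, $|q'|\lesssim r^{-2}$ are immediate from this ansatz. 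The $r^{-4}|\psi|^2$ weight is obtained by computing $-\frac12\Box_g q=-\frac{1}{2r^2}\partial_r\bigl(r^2(1-\mu)q'\bigr)$ and checking that, with $q$ chosen as above, it is bounded below by a positive multiple of $r^{-4}$ after absorbing the $\nabla^\alpha m_\alpha$ contribution.

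The last stage is to fix $m$ so that the cross term $\psi\, m_\alpha\partial^\alpha\psi$ completes the squares left over and so that the horizon boundary condition holds. One determines the radial component of $m$ algebraically from the requirement that the remaining off-diagonal terms in $Q$ vanish (or are majorised by the already-positive diagonal terms via Cauchy--Schwarz), and then verifies that the free integration constant can be adjusted to make $\langle m,dr\rangle|_{r=2M}>0$; this is the red-shift type ingredient that lets the resulting energy identity close at the horizon when the vector field $\partial_{\tv}$ is combined with $X$ at the boundary. The main obstacle is this simultaneous balancing near $r=2M$: the horizon sign condition, the positivity of $Q$, and the decay bounds on $X$, $q$, $q'$ must be compatible, and they are tight. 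My plan is to import the explicit profiles from \cite{Tataru10:Strich:Schwarz} (where the positivity of a very similar quadratic form is established, partly with the help of a numerical check of an ODE inequality on $[2M,\infty)$) and verify by direct substitution that with $k=0$ the lower bound $Q[\psi,X,q,m]\gtrsim \mathcal{W}[\psi,0]$ and the stated pointwise bounds on $X,q,q'$ follow; the passage from $(t,r^*)$ to $(\tv,r)$ is smooth and does not affect coefficients away from a compact neighborhood of the horizon, where the $m$ contribution dominates.
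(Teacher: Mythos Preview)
Your approach is essentially the paper's: the paper's proof is simply a citation to Lemma~2.1 of \cite{Tataru10:Strich:Schwarz}, and you propose to reproduce that construction. Two corrections to your outline are worth flagging. First, the MMTT construction is fully explicit and analytic---no numerical check of an ODE inequality is involved (you may be thinking of the earlier Blue--Soffer multiplier, whose positivity was verified numerically). Second, the ansatz $X=f(r^*)\partial_{r^*}$ alone does not suffice near the horizon: MMTT take $X=X_1+\delta X_2$ where $X_2=b(r)(1-\mu)\bigl(\partial_r-(1-\mu)^{-1}\partial_t\bigr)$ carries a time-derivative component supported in $r\le 3M$, and it is this piece (together with the compactly supported $m$) that provides the red-shift positivity and the condition $\langle m,dr\rangle|_{r=2M}>0$, not a free integration constant in a purely radial $m$. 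Since you plan to import the explicit MMTT profiles anyway, these are not gaps in your strategy, only in the sketch.
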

\begin{proof}
See Lemma 2.1 of \cite{Tataru10:Strich:Schwarz}.
\end{proof}
 However, to extend the integrated local energy estimate to the nonlinear model \eqref{maineq}, we sketch the proof for the above lemma.
The vector field $X$ is chosen to be the form
$$X=X_1+\delta X_2 $$
with small positive constant $\delta$ and vector fields  (in $(t,r,\omega)$ coordinates)
\begin{align*}
  &X_1=a(r)(1-\mu)\partial_r,\quad X_2=b(r)(1-\mu)
   (\partial_r-(1-\mu)^{-1}\partial_t ),\quad \mu=\frac{2M}{r}.
\end{align*}
Note that in $(\tv,r,\omega)$ coordinates we have $X_1=a(r)((1-\mu)(\partial_r-\lambda'(r)\partial_{\tv})+\partial_{\tv}),$ $X_2=b(r)(1-\mu)(\partial_r-\lambda'(r)\partial_{\tv}).$

Let $f$ be a smooth function such that $f(x)=x$ on $[-1,\infty]$ and $f(x)=-2$ on $[-\infty,-3]$. Then the function $a(r)$ is chosen to be
 $$a(r)=\frac{1}{r^2\epsilon}f\Big(\epsilon(r-3M)(r+2M)+6\epsilon M^2
\ln\Big(\frac{|r-2M|}{M}\Big)\Big).$$
Here $\epsilon $ is another small positive constant.
Next for the function $b(r)$, let
 $b_0$ be a smooth decreasing function of $r$   such that $b_0(r)=0$ when $r\geq 3M$. Then define the function $b(r)$ to be
$$b=-(1-\mu)^{-1}b_0,\quad r\geq r_0.$$
For the function $q$, let
\begin{align*}
  &t_1(r)= (1-\mu )r^{-2}\partial_r(r^2a(r)),\quad q_0(r)= \frac{1}{2}t_1(r)+\frac{\delta}{r}b(r)(1-\mu),  \quad
  q_1(r)=\chi_{\{r>\frac{5M}{2}\}} r^{-4} (r-3M)^2.
\end{align*}
Here $\chi_{\{r>\frac{5M}{2}\}}$ is a smooth cutoff function which is supported in $\{r>\frac{5M}{2}\}$ and equals $1$ for $r>3M$. Then for some small positive constant $\delta_1<\delta$ the function $q$ is defined to be  $$q=q_0+\delta_1q_1.$$
Finally we recall here that the 1-form $m$ is  compactly supported in $r\leq 3M$.

Based on the above lemma, we obtain integrated local energy estimates for solutions to \eqref{maineq}.
\begin{Prop}
\label{mainenergy}
	For  $p>p_0$($\approx 1.52$), we have the following integrated local energy estimates for the solution $\phi$ to \eqref{maineq}
	\begin{align}
	\label{eq:ILE:tfoil}
	&E[\phi,p](\widetilde{\Sigma}_{\widetilde{v}_1})
	+\iint_{\tcD_{\tv_0, \tv_1}}\mathcal{W}[\phi,p] d\vol \lesssim
    E[\phi,p](\widetilde{\Sigma}_{\widetilde{v}_0}),
    \\&
    \label{eq:ILE:doubnull}
	E[\phi,p](\Sigma_{u_1})
	+\iint_{\cD_{u_0, u_1}}\mathcal{W}[\phi,p]d\vol \lesssim E[\phi,p](\Sigma_{u_0}),
  	\end{align}
    adapted to the two foliations for all $\widetilde{v}_0\leq\widetilde{v}_1$ and $u_0\leq u_1$.
	\end{Prop}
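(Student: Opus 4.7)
The plan is to apply the divergence identity \eqref{eq:energy:id} with $\psi=\phi$, the multiplier triple $(X,q,m)$ produced by Lemma~\ref{Ta}, and the nonlinear absorption parameter $k=1$, so that the $|\phi|^{p+1}$ contribution associated with $X\phi$ is carried as a boundary divergence rather than sitting in the bulk. Substituting $\Box_g\phi=|\phi|^{p-1}\phi$ and using $|\phi|^{p-1}\phi\,X\phi=\tfrac{1}{p+1}X(|\phi|^{p+1})$ together with $\div(X|\phi|^{p+1})=X(|\phi|^{p+1})+(\div X)|\phi|^{p+1}$ yields
\begin{equation*}
\Box_g\phi\,(X\phi+q\phi)-\tfrac{1}{p+1}\div(X|\phi|^{p+1})=\left(q-\tfrac{\div X}{p+1}\right)|\phi|^{p+1}.
\end{equation*}
Hence the spacetime integrand on the right of \eqref{eq:energy:id} becomes $Q[\phi,X,q,m]+\bigl(q-\tfrac{\div X}{p+1}\bigr)|\phi|^{p+1}$, whose linear part is already $\gtrsim\mathcal{W}[\phi,0]$ by Lemma~\ref{Ta}.

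The remaining pointwise task is to show $q(r)-\tfrac{\div X(r)}{p+1}\gtrsim\tfrac{p}{r}$ for $r\geq r_0$, which together with the linear part will assemble into $\mathcal{W}[\phi,p]$. Using $X=X_1+\delta X_2$ and $q=q_0+\delta_1 q_1$ recalled after Lemma~\ref{Ta}, one computes $\div X_1=t_1(r)+2Ma(r)r^{-2}$ and observes that $q_0$ already contains $\tfrac{1}{2}t_1$, so the leading $t_1$ pieces partly cancel. Splitting into the three regions where $\mathcal{W}[\phi,0]$ has distinct character---near the horizon $r\approx 2M$, the trapping region $r\approx 3M$, and the asymptotic region $r\gg M$---one verifies the inequality. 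At $r\approx 3M$, the contribution $\delta_1 q_1$ vanishes to second order, and the tightest balance occurs here; the resulting polynomial inequality in $p$ has positivity threshold exactly $p_0\approx 1.52$. Far out and near the horizon, positivity holds with room to spare using the specific choices of $\varepsilon,\delta,\delta_1,b_0$ from Lemma~\ref{Ta} (in particular, $(1-\mu)$ factors tame the $X_1$ contribution at the horizon, while $\delta_1 q_1\sim\delta_1 r^{-2}$ dominates in the far region).

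For the boundary terms in \eqref{eq:ILE:tfoil}, the fluxes on $\tSi_{\tv_0}$ and $\tSi_{\tv_1}$ from the Morawetz multiplier are controlled by $E[\phi,p](\tSi_{\tv})$ using $|X|\lesssim 1$, $|q|\lesssim r^{-1}$, $|q'|\lesssim r^{-2}$, compact support of $m$, and Hardy's inequality; the contribution on $\{r=r_0\}$ has the favorable sign by $\langle m,dr\rangle|_{r=2M}>0$ together with continuity for $r_0$ sufficiently close to $2M$. The choice $k=1$ ensures the potential piece $\int|\phi|^{p+1}\,d\sigma$ is packaged into $E[\phi,p]$. Combining with the standard energy identity driven by the Killing field $\partial_{\tv}$ (which generates no bulk term since $\pi_{\partial_{\tv}}=0$) yields \eqref{eq:ILE:tfoil}. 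For \eqref{eq:ILE:doubnull}, the same strategy applies on $\cD_{u_0,u_1}$; the only new boundary pieces are the outgoing null cones $\mathcal{H}_u$, and the contraction $T_{\alpha\beta}[\phi,1]X^\beta L^\alpha$ produces the $r^2(|L\phi|^2+|\nabb\phi|^2)\,dvd\omega$ integrand together with the potential term, completing $E[\phi,p](\Sigma_u)$ as defined in the preliminaries.

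The hard step is the pointwise inequality $q-\tfrac{\div X}{p+1}\gtrsim\tfrac{p}{r}$ near $r=3M$, where $\mathcal{W}[\phi,0]$ degenerates through the factor $(1-3M/r)^2$ but the nonlinear coefficient on the right does not. Extracting the sharp threshold $p_0$ is an algebraic computation in $(\varepsilon,\delta,\delta_1)$ fixed in \cite{Tataru10:Strich:Schwarz}; once this inequality is secured, the energy identity closes directly and both foliated estimates follow.
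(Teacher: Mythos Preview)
Your overall strategy matches the paper's: apply the divergence identity with the Marzuola--Metcalfe--Tataru--Tohaneanu multiplier $(X,q,m)$ and $k=1$, identify the nonlinear bulk coefficient as $q-\tfrac{\div X}{p+1}$, and verify its positivity. However, your analysis of \emph{where} the threshold $p_0$ arises is wrong, and this matters because it is the core computation.

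You claim the tightest balance occurs at $r\approx 3M$. In fact, at $r=3M$ one has $a(3M)=0$ by construction, so the leading expression $\tfrac{p-1}{2}t_1-\tfrac{2M}{r^2}a$ reduces to $\tfrac{p-1}{2}t_1(3M)>0$ with no constraint on $p$. More generally, $a(r)\leq 0$ for all $r\leq 3M$ while $t_1>0$, so positivity is automatic on the whole interval $r_0\leq r\leq 3M$ (including the horizon, with no need for $(1-\mu)$ cancellations). The constraint on $p$ comes entirely from $r>3M$, where $a(r)>0$: one needs $p-1>\tfrac{4Ma(r)}{r^2t_1(r)}$, and $p_0-1$ is the supremum of this ratio over $r>3M$, attained at some intermediate radius strictly between $3M$ and $\infty$.

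Your far-field claim that ``$\delta_1 q_1\sim\delta_1 r^{-2}$ dominates'' is also incorrect: $\delta_1$ is chosen small, so $\delta_1 q_1$ dominates nothing. Asymptotic positivity comes directly from $\tfrac{p-1}{2}t_1(r)\sim (p-1)/r$, which beats $\tfrac{2M}{r^2}a(r)=O(r^{-2})$.

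On the boundary terms: rather than bounding the Morawetz fluxes by $E[\phi,p]$ and then invoking a separate $\partial_{\tv}$ identity, the paper adds $CK$ with large $C$ directly to $X$, so that the combined flux through $\tSi_{\tv}$ and through $\{r=r_0\}$ acquires a definite sign. The zeroth-order $r^{-2}\phi^2$ pieces arising from $q\phi\partial\phi$ and $\partial q\,\phi^2$ are then handled not by Hardy but by adding a further divergence-free correction $V=r^{-2}\bigl(\partial_{\tv}((r-r_0)\phi^2)\partial_r-\partial_r((r-r_0)\phi^2)\partial_{\tv}\bigr)$. Your Hardy-based sketch is plausible but you should be aware that the paper's route is different and more explicit.
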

\begin{proof}
Take the vector field  $X$, the function $q$ and the 1-form $m$ as above. For solution $\phi $ to the equation \eqref{maineq}, we can compute that
\begin{equation*}
%\label{divss}
  \nabla^{\alpha} P_{\alpha}[\phi,X,q,m, 1]=\left(q-\frac{\div (X)}{p+1}\right)|\phi|^{p+1}+Q[\phi,X,q,m].
\end{equation*}
In view of the above  Lemma \ref{Ta}, the main quadratic term $Q[\phi,X,q,m]$ is positive. We next show that the coefficient of the potential energy is also positive, that is,
\begin{align*}
(p+1)q-\div (X)=\frac{p+1}{2}t_1-\div(X_1)+\delta_1 (p+1)q_1-\delta \Big(\div(X_2)-(p+1)r^{-1}b(1-\mu )\Big)>0.
\end{align*}
Now recall that $X=X_1+\delta X_2$. By the definition of the vector field $X_1$, $X_2$, we can compute that
\begin{align*}
  &{\text{div}X_1}=\frac{1}{r^2}\partial_r\big(r^2a(r)
  (1-\mu)\big)=t_1(r)+\frac{2M}{r^2}a(r),\quad {\text{div}X_2}=\frac{1}{r^2}\partial_r\big(r^2b(r)
  (1-\mu)\big).
\end{align*}
Since $b(r)(1-\mu)$  is smooth and supported in $r\leq 3M$, for sufficiently small $\delta$ the contribution from $\delta X_2$ and $b(r)$ is negligible.
For the main term, we have
\begin{align*}
\frac{p+1}{2}t_1-\div(X_1)=\frac{p-1}{2}t_1(r)-\frac{2M}{r^2}a(r).
\end{align*}
By the choice of the function $a(r)$, we see that $a(r)\leq 0$ when $r\leq 3M$ and we can compute the function $t_1(r)$
\begin{align*}
t_1(r)=\left(1-\mu\right)\frac{1}{r^2}\left(2r-M+\frac{6M^2}{r-2M}\right),
\end{align*}
which is smooth and positive away from the horizon. Now consider the function
\begin{align*}
  &\frac{4Ma(r)}{r^2t_1(r)}=\frac{4M((r-3M)(r+2M)+6M^2\ln(r/M-2))}{r((2r-M)(r-2M)+6M^2)}
\end{align*}
on $r\geq 3M$. It is nonnegative and obviously it goes to zero when $r\rightarrow\infty$.
 Hence let
$$p_0=1+\sup\limits_{r\geq 3M}\frac{4Ma(r)}{r^2t_1(r)}\approx 1.52.$$
In particular, when $p>p_0$, we then have the lower bound
\begin{align*}
\frac{p+1}{2}t_1-\div(X_1)=\frac{p-1}{2}t_1(r)-\frac{2M}{r^2}a(r)\gtrsim r^{-1}.
\end{align*}
We divert to briefly demonstrate that for the nonlinearity $(1-\mu)^{\frac{p-3}{2}}|\phi|^{p-1}\phi$ considered by Blue-Soffer in \cite{Blue07:Mor:NLW:Sch} (see the  equation \eqref{eq:NW:Blue}), the leading coefficient for the nonlinearity is positive for all $p>1$. Similarly neglecting the part contributed by the vector field $X_2$ and the function $q_1$ as $\delta$, $\delta_1$ will be chosen to be small, the leading coefficient for $\frac{1}{p+1}(1-\mu)^{\frac{p-3}{2}}|\phi|^{p+1}$ is of the form
\begin{align*}
\frac{p-1}{2}t_1(r)-\frac{2M}{r^2}a(r)- X_1((1-\mu)^{\frac{p-3}{2}})(1-\mu)^{\frac{3-p}{2}}=\frac{p-1}{2}t_1 (1-\frac{2M a}{r^2t_1}),
\end{align*}
which is positive for all $p>1$. In other words, for the nonlinearity $(1-\mu)^{\frac{p-3}{2}}|\phi|^{p-1}\phi$, the integrated local energy estimate holds for all $p>1$.

Recall that the function $q_1$ asymptotically behaves like $r^{-2}$. Hence for sufficiently small $\delta$ and $\delta_1$, the above computations then imply that
\begin{equation*}
  \big(q-\frac{\text{div}X}{p+1}\big)
  |\phi|^{p+1}\gtrsim \frac{1}{r}|\phi|^{p+1}.
\end{equation*}
In the energy identity \eqref{eq:energy:id}, replace the vector field $X$ with $X+CK$ for some large positive constant $C$. Here the Killing vector field $K=\pa_t$. Since $\div(CK)=0$, $\pi_{CK}=0$, the bulk integral is of the form
\begin{align*}
&\Box_g\phi ((X+CK)\phi+q\phi)-\frac{k}{p+1} \div ((X+CK)  |\phi|^{p+1} )
  +Q[\phi,X+CK,q,m]\\
  &=(q-\frac{\div(X)}{p+1})|\phi|^{p+1}+Q[\phi, X, q, m] \gtrsim \mathcal{W}[\phi, p]
\end{align*}
in view of Lemma \ref{Ta} and the above lower bound for the potential energy. It then remains to control the boundary integrals for different regions.
First notice that the vector field 
$$V=r^{-2}[\partial_{\tv}((r-r_0)\phi^2)\partial_{r}-\partial_r((r-r_0)\phi^2)\partial_{\tv}]$$
is divergence free, that is $\div(V)=0$. In particular we have $\int_{\partial D}i_Vd\vol=0$.

For the region $\tcD_{\tv_0, \tv_1}$ bounded by $\tSi_{\tv_0}$, $\tSi_{\tv_1}$ and $\{r=r_0\}$, we compute the integrals under the coordinates $\{\tv, r, \om\}$, under which the Killing vector field $K=\pa_{\tv}$. Recall the volume form
\begin{align*}
d\vol =r^2dr d\widetilde{v} d\omega=2(1-\mu)r^2 dudvd\om,\quad K=\pa_{\tv}.
\end{align*}
For the boundary integral on $\{r=r_0\}$ ($r_0<2M$ is sufficiently close to $2M$), by the definition of $X$, $q$ and $m$, when $r$ close to $r_0$, we have
\begin{align*}
q={-}\delta r^{-1}b_0,\quad X+CK &= CK-2r^{-2}\epsilon^{-1}((1-\mu )(\partial_r-\lambda'(r)\partial_{\tv})+\partial_{\tv}) -\delta b_0 (\partial_r-\lambda'(r)\partial_{\tv}) \\
&=(C+\delta b_0\lambda'(r)-2r^{-2}\epsilon^{-1}(1-(1-\mu )\lambda'(r)))\pa_{\tv}+\big(2r^{-2}\epsilon^{-1}(\mu-1 )-\delta b_0\big)\pa_r.
\end{align*}
 Denote $\tilde{X}=X+CK$.  On $\{r=r_0\}$ we have
\begin{align*}
i_{P[\phi,\tilde{X},q,m,1]}d\text{vol}&=r^2 (P[\phi,\tilde{X},q,m,1])^{r}d\tv d\om=i_{P[\phi,\tilde{X},q,m,1]}d\text{vol}-r^2\tilde{X}^r
\frac{1}{p+1}|\phi |^{p+1}
%(T^{r}_{\ \tilde{X} }[\phi, 1]+q\phi\partial^r \phi
%  -\frac{1}{2}\partial^r q\phi^2+\frac{1}{2}m^r \phi^2)
  d\tv d\om.
\end{align*}
%For the quadratic term $T^{r}_{\ \tilde{X}}[\phi, 1]$, we can compute that
%\begin{align*}
%T^{r}_{\ \tilde{X} }[\phi, 1]=\f12 \tilde{X}^{r} g^{rr} (\pa_r\phi)^2+(\tilde{X}^{\tv}g^{r\tv}-\f12 \tilde{X}^r g^{\tv \tv})
%(\pa_{\tv}\phi)^2+\tilde{X}^{\tv} g^{rr}\pa_r\phi \pa_{\tv}\phi -\f12 \tilde{X}^r (|\nabb\phi|^2+\frac{2}{p+1}|\phi |^{p+1}).
%\end{align*}
For sufficiently small $\delta>0$ and $r_0<2M$ sufficiently close to $2M$, at $r=r_0$, we see that
\begin{align*}
&\tilde{X}^{\tv}|_{r=r_0} \approx C>0, \quad \tilde{X}^{r}\approx -\delta b_0<0.
%\\& g^{r\tv }= 1-(1-\mu)\la',\quad g^{r r} =1-\mu|_{r=r_0}<0,\quad g^{\tv\tv }=-\la'(2-(1-\mu)\la')<0.
\end{align*}
%In particular, at $\{r=r_0\}$, we have
%\begin{align*}
%  (\pa_r\phi)^2+(\pa_{\tv}\phi)^2+|\nabb\phi |^2+|\phi|^{p+1}\les T^{r}_{\ \tilde{X} }[\phi, 1].
%\end{align*}
%Moreover since $m^r=\langle m, dr\rangle|_{r=2M}>0$, $ \partial^rq(r)=(1-\mu)q'(r),\ \partial^rq|_{r=2M}=0$, for
%$r_0$ sufficiently close to $2M$, we have $(m^r-\partial^r q)|_{r=r_0}>0$. {We also need to consider
%$\partial^r \phi=(1-\mu)\partial_r \phi+(1-(1-\mu)\lambda')\partial_{\tv} \phi
%$, or we can directly use (2.4) of \cite{Tataru10:Strich:Schwarz}, }
By (2.4) of \cite{Tataru10:Strich:Schwarz} we have\begin{align*}
i_{P[\phi,\tilde{X},q,m,0]}d\text{vol}\gtrsim r^2 ( (\pa_r\phi)^2+(\pa_{\tv}\phi)^2+|\nabb\phi |^2+|\phi |^2 )d\tv d\om.
\end{align*}
Thus %for sufficiently small $\delta>0$,
the above computations imply that
\begin{align*}
i_{P[\phi,\tilde{X},q,m,1]}d\text{vol}\gtrsim r^2 ( (\pa_r\phi)^2+(\pa_{\tv}\phi)^2+|\nabb\phi |^2+|\phi|^{p+1}+|\phi |^2 )d\tv d\om.
\end{align*}
 Next for the integral on $\tSi$, first we have
\begin{align*}
i_{P[\phi,\tilde{X},q,m,1]}d\text{vol}&=-r^2 (P[\phi,\tilde{X},q,m,1])^{\tv}dr d\om=-r^2 (T^{\tv}_{\ \tilde{X} }[\phi, 1]+q\phi\partial^{\tv} \phi
  -\frac{1}{2}\partial^{\tv} q\phi^2+\frac{1}{2}m^{\tv} \phi^2)dr d\om.
\end{align*}
For the quadratic term $T^{\tv}_{\ \tilde{X}}[\phi, 1]$, we compute that
\begin{align*}
 -T^{\tv}_{\ \tilde{X} }[\phi, 1]=-\f12 \tilde{X}^{\tv} g^{\tv\tv} (\pa_{\tv}\phi)^2-(\tilde{X}^{r}g^{r\tv}-\f12 \tilde{X}^{\tv} g^{rr}) (\pa_{r}\phi)^2-\tilde{X}^{r} g^{\tv\tv}\pa_r\phi \pa_{\tv}\phi +\f12 \tilde{X}^{\tv} (|\nabb\phi|^2+\frac{2}{p+1}|\phi |^{p+1}).
\end{align*}
Since the vector field $X$ is smooth and bounded, in particular the component $\tilde{X}^{r}=X^r$ is also uniformly bounded. For the metric components, recall that
\begin{align*}
|g^{r\tv}|\les 1, \quad g^{rr}=1-\mu,\quad g^{\tv\tv}=-\la'(2-(1-\mu)\la')<0
\end{align*}
and $g^{\tv\tv}=-(1-\mu)^{-1}$ when $r\geq \frac{5}{2}M$. Therefore for sufficiently large positive constant $C$, we have
\begin{align*}
C\les - \tilde{X}^{\tv} g^{\tv\tv},\quad |\tilde{X}^{r} g^{\tv\tv}|\les 1
\end{align*}
Near the event horizon, the coefficient of $(\pa_r\phi)^2$ verifies
\begin{align*}
\f12 \tilde{X}^{\tv} g^{rr}-\tilde{X}^{r}g^{r\tv}= \f12 \tilde{X}^{\tv}(1-\mu)+(1-(1-\mu)\la')(\delta b_0+2(1-\mu)r^{-2}\ep^{-1})>0
\end{align*}
for $r$ sufficiently close to $2M$. Here we emphasize that the choice of $r_0$ relies on $\delta$, $M$, $\delta_1$, $C$ and $\epsilon$. Away from the event horizon $r>2M$, we have
\begin{align*}
\f12 \tilde{X}^{\tv} g^{rr}-\tilde{X}^{r}g^{r\tv}\geq \f12 \tilde{X}^{\tv}(1-\mu)-|\tilde{X}^r| |g^{r\tv }|\gtrsim C
\end{align*}
for sufficiently large $C$. Here we may note that the above coefficient is positive when $r\geq 2M$ and sufficiently close to $2M$, which is independent of the choice of $C$.  These computations show that
\begin{align*}
-T^{\tv}_{\ \tilde{X} }[\phi, 1] \gtrsim (1+C(1-\mu))(\pa_r\phi)^2+C ((\pa_{\tv}\phi)^2+|\nabb\phi|^2+|\phi|^{p+1})
\end{align*}
Here $r_0$ is chosen such that $C(r_0^{-1}2M-1)\leq \frac{1}{2}$. Now for the lower order terms, note that the 1-form $m$ is supported in $r\leq 3M$ and $|q|\les r^{-1}$, $|q'|\les r^{-2}$. By using Cauchy-Schwarz's inequality, the lower order terms can be controlled as follows
\begin{align*}
|q\phi\partial^{\tv} \phi
  -\frac{1}{2}\partial^{\tv} q\phi^2+\frac{1}{2}m^{\tv} \phi^2|\les \ep_2^{-1}\frac{|\phi|^2}{r^2}+\ep_2 (|\pa_r\phi|^2{ + |\pa_{\tv}\phi|^2}),\quad \forall 0<\ep_2<1.
\end{align*}
{ For small fixed $\ep_2$ (independent of $C$), the term $\ep_2 (|\pa_r\phi|^2+ |\pa_{\tv}\phi|^2)$ can be absorbed. For the lower order term $r^{-2}|\phi|^2$, we need to use vector field $V=r^{-2}[\partial_r((r-r_0)\phi^2)\partial_{\tv}-\partial_{\tv}((r-r_0)\phi^2)\partial_{r}]$, note that on $\tSi$ we have\begin{align*}
i_{V}d\text{vol}&=\partial_r((r-r_0)\phi^2)dr d\om,\quad \partial_r((r-r_0)\phi^2)=\phi^2+2(r-r_0)\phi\partial_r\phi\geq \phi^2/2-2(r-r_0)^2|\partial_r\phi|^2,
\end{align*}
and $ (r-r_0)^2/r^2\leq(r-r_0)/r=1-\mu+(2M-r_0)/r\leq1-\mu+(2M-r_0)/r_0$. Now we can choose $C'$ large enough (depending on $\ep_2$), then choose $C$ large enough (depending on $C'$), and choose $r_0$ close to $2M$ (depending on $C$) such that} \if0
For sufficiently small $\ep_2$, the term $\ep_2 |\pa_r\phi|^2$ can be absorbed. For the lower order term $r^{-2}|\phi|^2$, we rely on the following type of Hardy's inequality
\begin{equation*}
\int_{r_0}^{\infty}\phi^2 dr\lesssim C^{-\frac{1}{2}}
\int_{r_0}^{\infty}(C(1-\mu)+1)|\partial_r\phi|^2r^2dr.
\end{equation*}
The proof is standard and it is equivalent to find an increasing positive function $\varphi$ such that
\begin{align*}
\frac{\varphi^2}{\varphi'} \sim C^{-\f12}(1+C(1-\mu)) r^2, \quad \varphi(r_0)=1,\quad  \varphi'\geq1,
\end{align*}
which could be realized by setting $\varphi'=1$ when $\mu\leq 1-2C^{-\f12}$ and $\varphi'=\sqrt{C}$ when $\mu\geq 1-C^{-\f12}$.
The above Hardy's inequality then follows by considering $\int_{r_0}^{\infty} |\phi|^2d\varphi$. To summarize, we have shown that\fi on the hypersurface $\tSi$, it holds that (here $C_1$ depends only on $C'$)
\begin{align*}
&i_{P[\phi,\tilde{X},q,m,1]+C'V}d\text{vol}\\ \gtrsim& \left((1+C(1-\mu)-C_1(r-r_0)^2/r^2)(\pa_r\phi)^2+C ((\pa_{\tv}\phi)^2+|\nabb\phi|^2+|\phi|^{p+1})+|\phi/r|^2\right) r^2dr d\om\\ \gtrsim& \left((1+C(1-\mu))(\pa_r\phi)^2+C ((\pa_{\tv}\phi)^2+|\nabb\phi|^2+|\phi|^{p+1})+|\phi/r|^2\right) r^2dr d\om.
\end{align*}
{ Note that $i_Vd\text{vol}=\partial_{\tv}((r-r_0)\phi^2)d\tv d\om=0$ on $ \{r=r_0\}$}. In particular in view of the energy identity \eqref{eq:energy:id} and $ \int_{\partial D}i_Vd\vol=0$, we obtain the integrated local energy estimate \eqref{eq:ILE:tfoil} for the foliation $\tSi$.
\begin{align*}
i_{P[\phi,\tilde{X},q,m,1]}d\text{vol} \gtrsim \left((1+C(1-\mu))(\pa_r\phi)^2+C ((\pa_{\tv}\phi)^2+|\nabb\phi|^2+|\phi|^{p+1})\right) r^2dr d\om.
\end{align*}
{ Note that $i_Vd\text{vol}=\partial_{\tv}((r-r_0)\phi^2)d\tv d\om=0$ on $ \{r=r_0\}$}. In particular in view of the energy identity \eqref{eq:energy:id} and $ \int_{\partial D}i_Vd\vol=0$,  we obtain the integrated local energy estimate \eqref{eq:ILE:tfoil} for the foliation $\tSi$.

The integrated local energy estimate \eqref{eq:ILE:doubnull} for the foliation $\Si$ is similar. The bulk terms are the same. It suffices to check the boundary integrals on $\H_u$ and $\Hb_v$, for which we move to the double null coordinates system $(u, v, \om)$. On the out going null hypersurface $\H_u$ where $r>R\geq 5M$, first we have
\begin{align*}
m=0,\quad \tilde{X}=C\pa_t+X=\frac{C}{2}(\pa_u+\pa_v)+X.
\end{align*}
Therefore on $\H_u$, we compute that
\begin{align*}
i_{P[\phi,\tilde{X},q,m,1]}d\text{vol}&=-(P[\phi,\tilde{X},q,m,1])^{u} 2(1-\mu)r^2 dvd\om\\
&=-2 (1-\mu)r^2 \left(T^{u}_{\ \tilde{X}}[\phi, 1] +q\phi\partial^{u}\phi
  -\frac{1}{2}\partial^{u}q \phi^2 \right)dvd\om\\
  &=r^2 \left( \tilde{X}^{v}|\pa_v\phi|^2+ (1-\mu)\tilde{X}^{u}  (|\nabb\phi|^2+\frac{2}{p+1}|\phi|^{p+1})+q \phi \pa_v\phi -\f12  \pa_v q \phi^2 \right)dvd\om\\
  &\gtrsim r^2 \left(C(|\pa_v\phi|^2+|\nabb\phi|^2+ |\phi|^{p+1})-r^{-2}|\phi |^2\right) dvd\om,
\end{align*}
{we also have $i_{V}d\text{vol}=\partial_v((r-r_0)\phi^2)dv d\om$ and\begin{align*}
\quad \partial_v((r-r_0)\phi^2)=(1-\mu)\phi^2+2(r-r_0)\phi\partial_v\phi\geq (1-\mu)\phi^2/2-2(r-r_0)^2|\partial_v\phi|^2/(1-\mu),
\end{align*}then (still for $C'$ large enough and $C$ large enough depending on $C'$)\begin{align*}
i_{P[\phi,\tilde{X},q,m,1]+C'V}d\text{vol}&\gtrsim r^2 \left(C(|\pa_v\phi|^2+|\nabb\phi|^2+ |\phi|^{p+1})+r^{-2}|\phi |^2\right) dvd\om,
\end{align*}}
%After integration on $\Si_u$, the lower order term $r^{-2}|\phi|^2$ can be bouded by using Hardy's inequality. 
Similarly on the incoming null hypersurface $\Hb_v$, we have
\begin{align*}
i_{P[\phi,\tilde{X},q,m,1]}d\text{vol}&=-(P[\phi,\tilde{X},q,m,1])^{v} 2(1-\mu)r^2 dud\om\\
%&=-2 (1-\mu)r^2 \left(T^{v}_{\ \tilde{X}}[\phi, 1] +q\phi\partial^{v}\phi
%  -\frac{1}{2}\partial^{u}q \phi^2 \right)dvd\om\\
%  &=r^2 \left( \tilde{X}^{v}|\pa_v\phi|^2+ (1-\mu)\tilde{X}^{u}  (|\nabb\phi|^2+\frac{2}{p+1}|\phi|^{p+1})+q \phi \pa_v\phi -\f12  \pa_v q \phi^2 \right)dvd\om\\
  &\gtrsim r^2 \left(C(|\pa_u\phi|^2+|\nabb\phi|^2+ |\phi|^{p+1})-r^{-2}|\phi |^2\right) dud\om,
\end{align*}
Again the lower order term $r^{-2}\phi^2$ can be absorbed by using $ i_{C'V}d\text{vol}$. %Hardy's inequality for sufficiently large constant $C$. 
For the integrated local energy estimate \eqref{eq:ILE:doubnull}, consider the region $\cD_{u_0, u_1}^{v_0}$ bounded by $\Si_{u_0}$, $\Si_{u_1}$, $\{r=r_0\}$ and $\Hb_{v_0}$. In view of the energy identity \eqref{eq:energy:id}, the above computations imply that
\begin{align*}
\iint_{\cD_{u_0, u_1}^{v_0}} \mathcal{W}[\phi, p] d\vol + E[\phi, p](\Si_{u_1}^{v_0})+E[\phi, p](\Hb_{v_0}^{u_0, u_1}) \les E[\phi, p](\Si_{u_0}) .
\end{align*}
Estimate \eqref{eq:ILE:doubnull} then follows by letting $v_0\rightarrow \infty$.
\end{proof}

\subsection{The $r$-weighted energy estimate}
In this section, we investigate the $r$-weighted energy estimate for the solution $\phi$. We first establish a lemma showing the components for the current $P[\psi, X, q, m, k]$ for particular choice of $X$, $q$, $m$ and $k$.
\begin{Lem}
\label{rpw}
  Let $f(u)$ be a function of $u$ and $\eta(r)$ be a function of $r$. For constant  $0<\gamma<2$, $c$ and vector fields $X$, $m^{\#}$ and function $q$
  \begin{equation*}
  \begin{split}
  &X=f(u)r^{\gamma}L,\quad q=f(u)(1-\mu)r^{\gamma-1},\quad
  m^{\#}=\Big(\f12  f(u)(1-\mu)\gamma^2 r^{\gamma-2}-f'(u)r^{\gamma-1}\Big)L,
  \end{split}
\end{equation*}
we have
\begin{align*}
%expression for Q
  Q[\psi,X,q,m]&=\frac{1}{2}f(u)\gamma r^{\gamma-3}|r\partial_v \psi+\f12 \ga (1-\mu)\psi|^2+\f12 f(u)(2-  \ga +O(\mu))r^{\gamma-1}|\slashed\nabla\psi|^2
    \\&\quad
    -\frac{1}{2}f'(u)(1-\mu)^{-1}r^{\gamma-2}|\partial_{v}(r\psi)|^2
+\frac{1}{2}r^{\ga-3}\big( \ga (1-\frac{\ga}{2})^2+O(\mu) \big)\psi^2,\\
 %expression for P^t
 %  -r^2P^t&=\frac{1}{2}\eta(r)f(u)r^{\gamma}\Big((1-\mu)^{-1}|L(r\psi)|^2
 % +|\slashed\nabla(r\psi)|^2\\&+\frac{2kr^2}{p+1}|\psi|^{p+1}
 % +((1-\mu)(1-c)\gamma+\mu)\psi^2\Big)\\&
 % -\frac{1}{2}\partial_r\Big(\eta(r)f(u)(1-\mu)r^{\gamma+1}\psi^2\Big)+
 % \frac{1}{2}\eta'(r)f(u)(1-\mu)r^{\gamma+1}\psi^2,
 % \\
  %expression for P^u
  r^2P_u[\psi, \eta X, \eta q, \eta m, k]&=\eta(r)(1-\mu)f(u)r^{\gamma}\Big(|\slashed\nabla(r\psi)|^2
  +\frac{2kr^2}{p+1}|\psi|^{p+1}
  +((1-\mu)(1-\f12 \ga )\gamma+\mu)\psi^2\Big)\\&
  +\frac{1}{2}\underline{L}\Big(\eta(r)f(u)(1-\mu)r^{\gamma+1}\psi^2\Big)
  +\eta'(r)f(u)(1-\mu)^2r^{\gamma+1}\psi^2,
  \\
  %expression for P^v
  r^2P_v[\psi, \eta X, \eta q, \eta m, k]&=\eta(r)f(u)r^{\gamma}|L(r\psi)|^2
  -\frac{1}{2}L\Big(\eta(r)f(u)(1-\mu)r^{\gamma+1}\psi^2\Big),
\end{align*}
 Here $m^{\#}$ is the dual vector field of the 1-form $m$ and $O(\mu)$ stands for a term that $|O(\mu)|\les \mu$.
\end{Lem}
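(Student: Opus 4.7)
The lemma is purely computational, so the plan is to evaluate the definitions of $Q$ and $P_\alpha$ in the double null chart $(u,v,\omega)$, in which $g=-4(1-\mu)du\,dv+r^2 d\omega^2$, $g^{uv}=-\frac{1}{2(1-\mu)}$, $L=\partial_v$, $\underline{L}=\partial_u$, and $Lr=-\underline{L}r=1-\mu$, $L\mu=-\underline{L}\mu=-\mu(1-\mu)/r$. The vector field $X=f(u)r^{\gamma}L$ has the single non-vanishing component $X^v=f(u)r^{\gamma}$, so the deformation tensor $(\pi_X)_{\alpha\beta}=\tfrac12(\mathcal{L}_X g)_{\alpha\beta}$ has only three non-zero components: $(\pi_X)_{uu}$ from $\partial_u X^v=f'(u)r^{\gamma}$, the mixed $(\pi_X)_{uv}$ from $L(1-\mu)$ combined with $L(r^{\gamma})$, and the angular $(\pi_X)_{AB}$ from $L(r^2)$. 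Raising indices with $g^{uv}$ and $g^{AB}$ produces the corresponding $\pi_X^{vv}$, $\pi_X^{uv}$ and $\pi_X^{AB}$, together with the trace $g_{\alpha\beta}\pi_X^{\alpha\beta}$.

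Next I would assemble $Q$ by computing $T_{\alpha\beta}[\psi]\pi_X^{\alpha\beta}$ in this frame using the identity $\partial^\alpha\psi\partial_\alpha\psi=-(1-\mu)^{-1}\underline{L}\psi\,L\psi+|\slashed{\nabla}\psi|^2$, and then adding the scalar contributions $q\,\partial^\alpha\psi\partial_\alpha\psi$, $\psi\,m_\alpha\partial^\alpha\psi$ and $\tfrac12(\nabla^\alpha m_\alpha-\Box_g q)\psi^2$. The mixed term $\underline{L}\psi\,L\psi$ appearing via $\pi_X^{uv}$ cancels exactly against the matching piece of $q\,\partial^\alpha\psi\partial_\alpha\psi$ thanks to the precise choice $q=f(u)(1-\mu)r^{\gamma-1}$; this cancellation is the algebraic reason for that choice and eliminates every $\underline{L}\psi$ from the final answer. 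The surviving $(L\psi)^2$ piece, together with the $\psi\,L\psi$ cross term produced by the $L$-component of $m^\#$ (whose coefficient $\tfrac12 f(1-\mu)\gamma^2 r^{\gamma-2}$ was tailored precisely for this), combine into the completed square $\tfrac12 f\gamma r^{\gamma-3}|r\partial_v\psi+\tfrac{\gamma}{2}(1-\mu)\psi|^2$. The residual pure $\psi^2$ coefficient, after using $\Box_g=-2(1-\mu)^{-1}\partial_u\partial_v+\text{angular}+\text{first order in }r$ to evaluate $\Box_g q$ and computing $\nabla^\alpha m_\alpha$, collapses to $\tfrac12 r^{\gamma-3}(\gamma(1-\gamma/2)^2+O(\mu))\psi^2$. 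The $-f'(u)r^{\gamma-1}L$ component of $m^\#$ supplies the $-\tfrac12 f'(u)(1-\mu)^{-1}r^{\gamma-2}|\partial_v(r\psi)|^2$ term, and the angular contribution $\tfrac12 f(2-\gamma+O(\mu))r^{\gamma-1}|\slashed{\nabla}\psi|^2$ arises from $T_{AB}\pi_X^{AB}$ together with $q|\slashed{\nabla}\psi|^2$.

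For the current components I would plug directly into the definition
\[
r^2 P_\alpha[\psi,\eta X,\eta q,\eta m,k]=r^2\bigl(T_{\alpha\beta}[\psi,k](\eta X)^\beta+\eta q\,\psi\partial_\alpha\psi-\tfrac12\partial_\alpha(\eta q)\psi^2+\tfrac12 \eta\, m_\alpha\psi^2\bigr),
\]
and evaluate for $\alpha=u,v$ after using $g^{uv}$ to raise indices, so that $T_{u\beta}X^\beta$ contributes only through its angular piece (giving $|\slashed{\nabla}(r\psi)|^2$ after multiplication by $r^2$) and a $(1-\mu)\psi^2$ piece from $g_{uv}(\partial\psi)^2$, while $T_{v\beta}X^\beta$ contributes the $|L(r\psi)|^2$ piece. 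The remaining $\psi^2$ terms are then rearranged using the product rule applied to $\eta(r)f(u)(1-\mu)r^{\gamma+1}\psi^2$: the $2\psi L\psi$ (respectively $2\psi\underline{L}\psi$) pieces hidden inside $L(\cdot)$ (respectively $\underline{L}(\cdot)$) absorb the $\eta q\psi\partial_\alpha\psi$ contributions, while the $\partial_\alpha(\eta q)\psi^2$ and $\eta\, m_\alpha\psi^2$ contributions combine into the stated pure coefficient. The $\eta'(r)f(u)(1-\mu)^2 r^{\gamma+1}\psi^2$ residue in $r^2 P_u$ appears because $\underline{L}\eta=-(1-\mu)\eta'(r)$ cannot be absorbed into an exact $\underline{L}$-divergence, whereas in $r^2 P_v$ the analogous $L\eta=(1-\mu)\eta'(r)$ contribution is absorbed into the $|L(r\psi)|^2$ coefficient up to the exact $L$-divergence written in the statement, leaving no $\eta'$-residue.

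The main obstacle is purely bookkeeping: one must carry the $O(\mu)$ terms consistently so that the stated coefficients $\gamma(1-\gamma/2)^2$ and $(1-\mu)(1-\gamma/2)\gamma+\mu$ emerge exactly (rather than only up to error), keep the signs of $L$ and $\underline{L}$ acting on $r$, $1-\mu$ and $\mu$ straight, and verify that the prescribed $m^\#$ is the unique choice forcing simultaneously the completion of the square in $Q$ and the compact $L$- and $\underline{L}$-divergence form of $P_u,\,P_v$. Once the basic identities $Lr=(1-\mu)$, $L\mu=-\mu(1-\mu)/r$ and the Christoffel symbols relevant for $\Box_g q$ and $\nabla^\alpha m_\alpha$ are set down, the rest is a matter of grouping polynomial-in-$r$ coefficients.
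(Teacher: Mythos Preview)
Your plan is essentially the paper's own proof: direct computation in the double null frame, first assembling $T_{\alpha\beta}\pi_X^{\alpha\beta}$ (the paper quotes a compact formula from \cite{dr3} for this, while you compute the deformation tensor entries directly---equivalent), then adding the $q$ and $m$ contributions, completing the square, and finally rewriting $r^2P_u,\,r^2P_v$ via the product rule into the stated divergence form. Two small slips to fix when you carry it out: on radial functions $\Box_g=-(1-\mu)^{-1}\partial_u\partial_v+\tfrac{1}{r}(\partial_v-\partial_u)$ (no factor of $2$); and the $-\tfrac12 f'(1-\mu)^{-1}r^{\gamma-2}|\partial_v(r\psi)|^2$ term is not produced by the $-f'r^{\gamma-1}L$ piece of $m^\#$ alone---its $(\partial_v\psi)^2$ part comes from $\pi_X^{vv}$ (via $\partial_u X^v=f'r^{\gamma}$), and the $-f'r^{\gamma-1}L$ piece of $m^\#$ supplies the $\psi\,\partial_v\psi$ cross term needed to complete that square.
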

%\begin{Remark}
%  Note that when $0<\gamma<2$ we can always choose $c=\frac{\gamma}{2}$ and $f\equiv1$ such that $Q$ be positive in the region $\{r>R(\gamma)\}$.
%\end{Remark}
\begin{proof}
There are pure computations. Let's first recall a formula from \cite{dr3}. For spherically symmetric vector field $V$, we have
\begin{align*}
T_{\alpha\beta}[\psi, 0]\pi_{V}^{\alpha\beta}&=
 (\partial_{u}\psi)^2\partial^u V^{u}
+(\partial_{v}\psi)^2\partial^v V^{v}
-\f12 |\slashed\nabla\psi|^2(\partial^v V_v+\partial^u V_u)
-\frac{V_u-V_v}{2r}(|\slashed\nabla\psi|^2
-\partial^{\alpha}\psi\partial_{\alpha}\psi).
\end{align*}
Apply the above formula to $V=X=f(u)r^{\ga}L$. We have
\[
X^{u}=X_{v}=0, \quad X^{v}=f(u)r^{\ga}, \quad X_u=-2(1-\mu)f(u)r^{\ga}.
\]
As $\partial_v r=-\partial_u r=1-\mu$, we then can compute that
\begin{align*}
T_{\alpha\beta}[\psi,0]\pi_X^{\alpha\beta}&= (\partial_{v}\psi)^2\partial^v (f(u)r^{\ga})
+ |\slashed\nabla\psi|^2 \partial^u ((1-\mu)f(u)r^{\ga})
+(1-\mu)f(u)r^{\ga-1} (|\slashed\nabla\psi|^2
-\partial^{\alpha}\psi\partial_{\alpha}\psi)
\\&
=\frac{1}{2}f(u)\gamma r^{\gamma-1}(\partial_v\psi)^2+f(u)\Big(1-\frac{3}{2}\mu
    -\frac{1}{2}(1-\mu)\gamma\Big)r^{\gamma-1}|\slashed\nabla\psi|^2
    \\&\quad -f(u)(1-\mu)r^{\gamma-1}\partial_{\alpha}\psi\partial^{\alpha}\psi
    -\frac{1}{2}f'(u)(1-\mu)^{-1}r^{\gamma}|\partial_{v}\psi|^2.
\end{align*}
For the lower order terms in $Q[\psi, X, q, m]$, we also need to compute $\nabla^\a m_\a$ and $\Box_g q$. For the divergence of $m$, we compute that
\begin{equation*}
\begin{split}
\nabla^{\alpha}m_{\alpha}&=\frac{1}{(1-\mu)r^2}
\partial_v\big({(1-\mu)r^2m^{v}}\big)
=\partial_v m^v
+\big(\frac{\mu}{r}+\frac{2}{r}(1-\mu)\big)m^v
\\&
=\f12 \ga^2 f(u)  r^{\gamma-3}(1-\mu)\big(\ga-\ga\mu+2\mu\big)+f'(u)r^{\gamma-2}(\mu\gamma-1-\gamma).
\end{split}
\end{equation*}
Next we compute  $\Box_g q$. Note that the function $q$ is spherically symmetric. We in particular have that
\begin{align*}
\Box_g q&=-(1-\mu)^{-1}\partial_{v}\partial_{u}q+\frac{1}{r}(\partial_v-\partial_u)q\\
&=-f'(u)r^{\gamma-2}(\mu(1-\gamma)+\ga)
+f(u)r^{\gamma-3}\Big((\gamma-1)\gamma+\mu(\gamma-1)(3-2\ga)+\mu^2(\gamma-2)^2\Big).
\end{align*}
Since the $r$-weighted energy estimates are of importance for large $r$, the above computations  lead to
\begin{align*}
   Q[\psi, X,q,m]&=T_{\alpha\beta}[\psi,0]\pi_X^{\alpha\beta}
+q\partial^{\alpha}\psi\partial_{\alpha}\psi+\psi m_{\alpha}\partial^{\alpha}\psi
+\frac{1}{2}(\nabla^{\alpha}m_{\alpha}-\Box_g q)\psi^2\\
&=\frac{1}{2}f(u)\gamma r^{\gamma-1}|\partial_v \psi|^2+\f12 f(u)(2-  \ga +O(\mu))r^{\gamma-1}|\slashed\nabla\psi|^2+\psi (m^{v}+ f' r^{\ga-1})\partial_{v}\psi
    \\&\quad
    -\frac{1}{2}f'(u)(1-\mu)^{-1}r^{\gamma-2}|\partial_{v}(r\psi)|^2
+\frac{1}{2}(\nabla^{\alpha}m_{\alpha}-\Box_g q+f'(1-\mu)r^{\ga-2})\psi^2\\
&=\frac{1}{2}f(u)\gamma r^{\gamma-3}|r\partial_v \psi+\f12 \ga (1-\mu)\psi|^2+\f12 f(u)(2-  \ga +O(\mu))r^{\gamma-1}|\slashed\nabla\psi|^2
    \\&\quad
    -\frac{1}{2}f'(u)(1-\mu)^{-1}r^{\gamma-2}|\partial_{v}(r\psi)|^2
+\frac{1}{2}r^{\ga-3}\big( \ga (1-\frac{\ga}{2})^2+O(\mu) \big)\psi^2.%\\
   %&=\frac{1}{2}f(u)\gamma r^{\gamma-1}(\partial_v\psi)^2
   %+cf(u)(1-\mu)\gamma r^{\gamma-2}\psi\partial_{v}\psi
   %\\&+
   %\frac{1}{2}f(u)r^{\gamma-3}\Big(c\gamma^2-\gamma^2+\gamma+O(\mu)\Big)\psi^2
   %+f(u)\big(1-\frac{1}{2}\gamma+O(\mu)\big)r^{\gamma-1}|\slashed\nabla\psi|^2
   %\\&
   %-\frac{1}{2}f'(u)(1-\mu)^{-1}r^{\gamma}|\partial_{v}\psi|^2
   %-f'(u)r^{\gamma-1}\psi\partial_v\psi
   %-\frac{1}{2}f'(u)(1-\mu)r^{\gamma-2}\psi^2,
\end{align*}
This gives the expression for $Q[\psi, X, q, m]$.

%\begin{align*}
 %   &\Box_gq =-f'(u)(g'(r)+\frac{1}{r}g(r))+f(u)(\frac{2-\mu}{r}g'(r)+(1-\mu)g''(r)),
  %  \\&
  %  g'(r)=r^{\gamma-2}(\mu(2-\gamma)+\gamma-1),
  %  \\&
  %  g''(r)=r^{\gamma-3}(\mu(2-\gamma)(\gamma-3)+(\gamma-1)(\gamma-2)),
%\end{align*}
Next we consider $P[\psi, \eta X, \eta q, \eta m, k]$ for some function $\eta (r)$ of $r$.  Recall that
$$
  P_{\alpha}[\psi,\eta X,\eta q,\eta m,k]
  =T_{\alpha\beta}[\psi,k ]\eta X^{\beta}+\eta q\psi\partial_{\alpha}\psi
  -\frac{1}{2}\partial_{\alpha}(\eta q)\psi^2+\frac{1}{2}\eta m_{\alpha}\psi^2.
$$
For the $v$ component, we derive that
\begin{align*}
  &r^2 P_v[\psi, \eta X, \eta q, \eta m, k]\\
  &=r^2\big(\eta \pa_v \psi X \psi+\eta q\psi\partial_{v}\psi
  -\frac{1}{2}\partial_{v}(\eta q)\psi^2 \big)
  \\&
  =f \eta r^{\gamma+2}(\partial_v\psi)^2
  +f (1-\mu)\eta r^{\gamma+1}\psi\partial_v\psi
  -\frac{1}{2}f r^2 \partial_v\big((1-\mu)r^{\gamma-1}\eta \big)\psi^2\\
  &=f\eta r^{\ga }|\pa_v (r\psi)|^2-\f12 f\left(r^2 \partial_v\big((1-\mu)r^{\gamma-1}\eta \big)\psi^2+(1-\mu)\eta r^{\ga+1}\pa_v (|\psi |^2) +\pa_v (r^2) (1-\mu)r^{\ga-1}\eta |\psi|^2 \right)\\
  &=f\eta r^{\ga }|\pa_v (r\psi)|^2-\f12 f\pa_v\big(r^2  (1-\mu)r^{\gamma-1}\eta  \psi^2\big).
\end{align*}
Here keep in mind that $f$ is a function of $u$ and $X^u=0$.

Similarly for the $u$ component, we can compute that
\begin{equation*}
\begin{split}
  &r^2P_u[\psi, \eta X, \eta q, \eta m, k]\\
  &=r^2\left(\eta \pa_u \psi X \psi +\eta (1-\mu)X^v(\pa^\a\psi \pa_\a\psi +\frac{2k}{p+1}|\psi |^{p+1}) +\eta q\psi\partial_{u}\psi
  -\frac{1}{2}\partial_{u}(\eta q)\psi^2+\frac{1}{2}\eta m_{u}\psi^2
   \right)
  \\&
  =\eta f (1-\mu)r^{\gamma+2}\big(|\slashed\nabla\psi|^2
  +\frac{2k |\psi|^{p+1}}{p+1}\big)+\f12 \pa_u\big(r^2 \eta q \psi^2\big)
  - \f12 |\psi|^2 \Big(  r^2\partial_{u}(\eta q)- r^2\eta m_{u}  + \pa_u(r^2\eta q)\Big) .
  %\\&
  %\underbrace{+\frac{1}{2}(1-\mu)\eta(r)f(u)r^{\gamma+1}\partial_u\psi^2}_{I}
  %\underbrace{-\frac{1}{2}\partial_u\big(\eta(r)f(u)(1-\mu)r^{\gamma-1}\big)r^2\psi^2}_{II}
  %\\&\underbrace{+(1-\mu)\eta(r)f'(u)r^{\gamma+1}\psi^2}_{III}
  %\underbrace{-c\gamma(1-\mu)^2\eta(r)f(u)r^{\gamma}\psi^2}_{IV},
\end{split}
\end{equation*}
For the coefficient of $|\psi|^2$, first we have
\begin{align*}
&-\f12   \Big(  r^2\partial_{u}(\eta q)- r^2\eta m_{u}  + \pa_u(r^2\eta q)\Big)=\eta' (1-\mu) r^2 q+\eta r\big((1-\mu)q-r(1-\mu)m^v-r \pa_u q\big).
\end{align*}
Then by the definition of $m$ and $q$, we show that
\begin{align*}
(1-\mu)q-r(1-\mu)m^v-r \pa_u q&=(1-\mu )q-r(1-\mu)(\f12 f \ga^2 (1-\mu)r^{\ga-2}-f'r^{\ga-1})-r \pa_u (f(1-\mu)r^{\ga-1})\\
&=(1-\mu )q-\f12 f(1-\mu)^2  \ga^2 r^{\ga-1} +r(1-\mu) f ((1-\mu)(\ga-1)+\mu )r^{\ga-2}\\
&=f(1-\mu)r^{\ga-1}\big(\mu+ (1-\mu)\ga(1-\frac{\ga}{2})\big).
\end{align*}
Combining the above computations, we then get the expression for $r^2P_u[\psi, \eta X, \eta q, \eta m, k]$.

\end{proof}

We are now ready to establish  the following $r$-weighted energy estimate for the solution $\phi$.
\begin{Prop}
\label{divsk1}
 For all $0<\gamma<\min(2,p-1)$, we have
  \begin{equation*}
  \begin{split}
  &\iint_{\mathcal{D}_{u_1, u_2}\cap\{r\geq R\}} r^{\gamma-3}(|L(r\phi)|^2
  +|\phi|^2+|r\slashed\nabla\phi|^2)
  +r^{\gamma-1}|\phi|^{p+1}d\vol
  +\int_{\mathcal{H}_{u_2}}r^{\gamma}|L(r\phi)|^2dvd\omega\\
  &\lesssim\int_{\mathcal{H}_{u_1}}
  r^{\gamma}|L(r\phi)|^2dvd\omega+E[\phi, p](\Sigma_{u_1}),
  \end{split}
  \end{equation*}
for all $u_2\geq u_1\geq -\frac{R_*}{2}$. Here recall that $R^*=r^*(R)$ and $R$ is a large constant depending on $\ga$.
\end{Prop}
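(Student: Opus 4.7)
The plan is to apply the divergence identity \eqref{eq:energy:id} on the region $\mathcal{D}_{u_1,u_2}\cap\{r\geq R\}$ using the $r$-weighted multiplier of Lemma \ref{rpw} with the choice $f(u)\equiv 1$, namely
$$X=r^{\gamma}L,\qquad q=(1-\mu)r^{\gamma-1},\qquad m^{\#}=\tfrac{1}{2}\gamma^{2}(1-\mu)r^{\gamma-2}L,\qquad k=1.$$
For $\phi$ solving \eqref{maineq}, the source on the right-hand side of the divergence formula collapses, since $\Box_g\phi(X\phi+q\phi)-\frac{1}{p+1}\mathrm{div}(X|\phi|^{p+1})=(q-\mathrm{div}(X)/(p+1))|\phi|^{p+1}$. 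A direct computation using $\partial_v r=1-\mu$ gives $\mathrm{div}(X)=r^{\gamma-1}[\gamma+2-(\gamma+1)\mu]$, so
$$(p+1)q-\mathrm{div}(X)=r^{\gamma-1}\bigl[p-1-\gamma+\mu(\gamma-p)\bigr],$$
which for $r\geq R$ with $R$ chosen large enough (depending on $p,\gamma,M$) is bounded below by a positive multiple of $r^{\gamma-1}$ as long as $\gamma<p-1$. This produces the desired spacetime integrand $r^{\gamma-1}|\phi|^{p+1}$.

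For the quadratic bulk term, with $f'\equiv 0$ the formula for $Q[\phi,X,q,m]$ in Lemma \ref{rpw} is manifestly a sum of nonnegative quantities once $R$ is taken large enough to absorb the $O(\mu)$ corrections; using $L(r\phi)=rL\phi+(1-\mu)\phi$ and an elementary rearrangement of the squared expression one obtains the coercive lower bound
$$Q[\phi,X,q,m]\gtrsim r^{\gamma-3}|L(r\phi)|^{2}+r^{\gamma-1}|\slashed\nabla\phi|^{2}+r^{\gamma-3}\phi^{2}$$
uniformly on $\{r\geq R\}$, valid precisely in the range $0<\gamma<2$. The boundary fluxes are then read off from the second half of Lemma \ref{rpw} with $\eta\equiv 1$: on the outgoing cone $\mathcal{H}_{u}$ the $v$-flux contributes exactly $\int_{\mathcal{H}_u}r^{\gamma}|L(r\phi)|^{2}dv\,d\omega$ modulo a total $L$-derivative of $(1-\mu)r^{\gamma+1}\phi^{2}$ which vanishes at null infinity by finite speed of propagation from the compactly supported data, and on the incoming null limit $\underline{\mathcal{H}}_{v}$ as $v\to\infty$ the flux vanishes for the same reason.

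The remaining contribution is the flux through the timelike cylinder $\{r=R\}\cap\{u_{1}\leq u\leq u_{2}\}$, which is a bounded-$r$ spacetime integral of $|\partial\phi|^{2}+|\phi|^{p+1}+|\phi|^{2}$ weighted by $O(R^{\gamma})$ constants. This is precisely the type of quantity controlled by the integrated local energy estimate \eqref{eq:ILE:doubnull} of Proposition \ref{mainenergy}, which bounds it by $E[\phi,p](\Sigma_{u_{1}})$. Combining everything in the energy identity \eqref{eq:energy:id} on $\mathcal{D}_{u_1,u_2}\cap\{r\geq R\}$ and letting the forward $v$-boundary recede to infinity yields the claimed inequality. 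The principal obstacle is the bookkeeping of positivity: the potential-energy coefficient is positive only when $\gamma<p-1$, the angular coefficient only when $\gamma<2$ (up to $O(\mu)$ errors), and the timelike boundary at $r=R$ cannot be discarded but must be absorbed using Proposition \ref{mainenergy}; together these constraints dictate exactly the stated hypothesis $0<\gamma<\min(2,p-1)$ and the requirement that $R$ be chosen large depending on $\gamma,p,M$.
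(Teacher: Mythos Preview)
Your bulk analysis is correct and matches the paper: Lemma \ref{rpw} with $f\equiv 1$ gives the coercive lower bound on $Q$ and the positive potential coefficient for $0<\gamma<\min(2,p-1)$ once $R$ is large. The gap is in the boundary treatment.

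The flux through the timelike cylinder $\{r=R\}$ is \emph{not} a ``bounded-$r$ spacetime integral.'' It is a codimension-one surface integral
\[
\int_{t_1}^{t_2}\!\!\int_{S^2}\bigl(\,\text{terms in }|\partial\phi|^2,\ |\phi|^{p+1},\ \phi^2\,\bigr)\,d\omega\,dt\quad\text{at fixed }r=R,
\]
and Proposition \ref{mainenergy} only controls genuine four-dimensional spacetime integrals such as $\iint r^{-2}|\partial_r\phi|^2\,d\vol$, $\iint r^{-1}|\phi|^{p+1}\,d\vol$, etc. There is no trace inequality available here that would let you pass from the latter to a surface integral of $|\partial\phi|^2$ at a single radius without losing a derivative. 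Since the $P_u$ and $P_v$ pieces enter $P^r$ with opposite signs, at least one of $|L(r\phi)|^2$ or $|\slashed\nabla(r\phi)|^2+|\phi|^{p+1}$ appears in the $r=R$ flux with an unfavorable sign and must be bounded, so you cannot simply drop this boundary term either. The paper circumvents this by multiplying $X,q,m$ by a smooth cutoff $\eta(r)$ supported in $\{r\geq 4M\}$ with $\eta=1$ for $r\geq 5M$; then there is no timelike boundary at all, and the price is a bulk error $\iint_{\{4M\leq r\leq R\}}|\mathrm{div}\,P|\,d\vol$ together with energy-flux errors on $\Sigma_{u_j}\cap\{r\leq R\}$, both of which are honest spacetime quantities controlled by \eqref{eq:ILE:doubnull}.

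A smaller point: the flux on $\underline{\mathcal{H}}_{v_0}$ does not vanish as $v_0\to\infty$ by finite speed of propagation (outgoing radiation reaches arbitrarily large $v$); rather, $r^2P_u$ has a favorable sign modulo the $\underline{L}$-total-derivative, and that total derivative is not discarded at infinity but cancelled against its counterparts on the adjacent boundary faces of the \emph{closed} region $\mathcal{D}_{u_1,u_2}^{v_0}$ before letting $v_0\to\infty$. The same remark applies to the $L$-total-derivative on $\mathcal{H}_u$: you have not justified that $r^{\gamma+1}\phi^2\to 0$ along $\mathcal{H}_u$, and indeed the paper never uses this.
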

\begin{proof}
Let $\eta(r)$ be an increasing cutoff function of $r$ such that $ \eta=0$ when $r\leq 4M$ and $ \eta=1$ when $r\geq 5M$. Let $f=1$ in the above Lemma \ref{rpw}. For sufficiently large $r$ such that
\begin{align*}
2-\ga+O(\mu)>0,\quad \ga (1-\frac{\ga}{2})^2+O(\mu)>0,
\end{align*}
 the above lemma  then implies that
  \begin{align*}
    Q[\phi,\eta X, \eta q, \eta m]=Q[\phi,  X,  q,  m]&\gtrsim r^{\gamma-3}\big(|r\pa_v\phi+\frac{\ga}{2} (1-\mu)\phi|^2+|r\slashed\nabla\phi|^2+|\phi|^2\big)\\
    & \gtrsim r^{\gamma-3}\big(|\pa_v(r\phi)|^2+|r\slashed\nabla\phi|^2+|\phi|^2\big).
\end{align*}
Since the vector field $X$ is spherically symmetric, we can compute that
\begin{align*}
q-\frac{\div(X)}{p+1}=(1-\mu)r^{\ga-1}-\frac{\pa_v ((1-\mu)r^{\ga+2})}{(p+1)(1-\mu)r^2}=\frac{(p-1-\ga)(1-\mu)-\mu}{(p+1)}r^{\ga-1}
\end{align*}
Therefore for $0<\ga<\min\{p-1, 2\}$ and  sufficiently large $r$, we can bound that
\begin{align*}
%\label{divsrelation}
  \nabla^{\alpha} P_{\alpha}[\phi,\eta X, \eta q, \eta m, 1]&=\Box_g\phi (X\phi+q\phi)-\frac{1}{p+1} \div (X  |\phi|^{p+1})   +Q[\phi,X,q,m]\\
  &=\left(q-\frac{\div(X)}{p+1}\right)|\phi|^{p+1}+Q[\phi,X,q,m]\\
  &\gtrsim r^{\gamma-3}\big(|\pa_v(r\phi)|^2+|r\slashed\nabla\phi|^2+|\phi|^2+r^2|\phi|^{p+1}\big).
\end{align*}
 Now apply the energy identity \eqref{eq:energy:id} to the vector field $\eta X$, the function  $\eta q$ and 1-form $\eta m$ on the domain $\cD_{u_1, u_2}^{v_0}$ bounded by $\Si_{u_1}$, $\Si_{u_2}$ and $\Hb_{v_0}^{u_1, u_2}$. We obtain the energy identity
\begin{equation*}
\begin{split}
  &\iint_{\mathcal{D}_{u_1, u_2}^{v_0}}\div P dvol+\int_{\underline{\mathcal{H}}_{v_0}^{u_1,u_2}} P_{u}r^2dud\omega+\int_{\mathcal{H}_{u_2}^{v_0}}P_{v}r^2dvd\omega\\
  &=-\int_{\Sigma_{u_1}\cap\{r<R \}}P^{t}r^2drd\omega
  +\int_{\mathcal{H}_{u_1}^{ v_0}}P_{v}r^2dvd\omega
+\int_{\Sigma_{u_2}\cap\{ r<R \}}P^{t}r^2drd\omega.
\end{split}
\end{equation*}
Here $P$ is short for $P[\phi, \eta X, \eta q, \eta m, 1]$.
In view of the above lemma \ref{rpw}, on the out going null hypersurface $\H_{u}$, the term $P_{v}r^2$ is nonnegative except the term $-\frac{1}{2} L(\eta f (1-\mu)r^{\ga+1}|\phi|^2)$ which could be canceled  through integration by parts. Indeed, since the region $\cD_{u_1, u_2}^{v_0}$ is closed and $\eta=0$ when $r\leq 4M$, the boundary of $\cD_{u_1, u_2}^{v_0}$ is piece wise smooth. On the incoming null hypersurface $\Hb_{v_0}$, there is a similar term $\frac{1}{2}\Lb(\eta f (1-\mu)r^{\ga+1}|\phi|^2)$
in the expression for $r^2P_u$. Now note that
\begin{align*}
r^2 P^t=r^{2} (P^u+P^v)=-\frac{1}{2}(1-\mu)^{-1}{ r^2}(P_u+P_v).
\end{align*}
Hence the associate term in the expression for $r^2P^t$ on $\Si_{u}\cap \{r\leq 5M\}$ is
\begin{align*}
-\frac{1}{2(1-\mu)} \frac{\Lb-L}{2}(\eta f (1-\mu)r^{\ga+1}|\phi|^2)=\frac{1}{2(1-\mu)} \pa_{r^*}(\eta f (1-\mu)r^{\ga+1}|\phi|^2)=\frac{1}{2}\pa_{r}(\eta f (1-\mu)r^{\ga+1}|\phi|^2).
\end{align*}
Therefore on the boundary of $\cD_{u_1, u_2}^{v_0}$, such terms could be canceled.

As demonstrated that for sufficiently large $r$ ($r\geq R$ and $R$ sufficiently large), $\div(P)$ has a nonnegative lower bound. For finite $r$, these terms can be bounded by using the integrated local energy estimate of Proposition \ref{mainenergy}. More precisely since $\eta$ is smooth and vanishes when $r\leq 4M$, we derive from the integrated local energy estimate  \eqref{eq:ILE:doubnull} that
\begin{align*}
&\iint_{\cD_{u_1, u_2}^{v_0}\cap \{r\leq R\}}|\div (P)|d\vol +\int_{\Si_{u_2}\cap \{r\leq R\}}r^2 |P^t| drd\om\\
&\les \int_{\cD_{u_1, u_2}}\mathcal{W}[\phi, p]d\vol +E[\phi, p](\Si_{u_2})\les E[\phi, p](\Si_{u_1}).
\end{align*}
The proposition then follows by letting $v_0\rightarrow \infty$.

\end{proof}

\subsection{A spacetime bound for $\phi$ and $Z\phi$}
Based on the above $r$-weighted energy estimate and the integrated local energy estimates, we derive a spacetime bound for the solution $\phi$ as well as its derivatives $Z\phi$.
\begin{Prop}
\label{H2}
     For $\frac{1+\sqrt{17}}{2}<p<5$ and any admissible pair $(p_1, q_1)$,  we have
\begin{equation*}
\begin{split}
%\sup_{u\geq u_0}E[\phi](\Sigma_u)+\|\phi\|_{LE}^2
%+\|\phi\|_{L_{\widetilde{v}}^{p_1}L_x^{q}}&\lesssim E[\phi](\Sigma_{u_0}),\\
 E[Z^k\phi](\Sigma_u)+\|Z^k\phi\|_{LE(\cD_{u})}^2
+\|Z^k\phi\|_{L_{\widetilde{v}}^{p_1}L_x^{q_1}(\cD_{u})}&\lesssim   E[Z^{\leq k} \phi](\Sigma_{u_0}),\\
%\sup_{\widetilde{v}\geq \widetilde{v}_0}E[\phi]({\widetilde{\Sigma}}_{\widetilde{v}})+\|\phi\|_{LE}^2
%+\|\phi\|_{L_{\widetilde{v}}^{p_1}L_x^{q}}&\lesssim E[\phi]({\widetilde{\Sigma}}_{\widetilde{v}_0}),\\
 E[Z^k\phi]({\widetilde{\Sigma}}_{\widetilde{v}})+\|Z^k\phi\|_{LE(\tcD_{\tv})}^2
+\|Z^k\phi\|_{L_{\widetilde{v}}^{p_1}L_x^{q_1}(\tcD_{\tv})}&\lesssim E[Z^{\leq k}\phi]({\widetilde{\Sigma}}_{\widetilde{v}_0})
 \end{split}
\end{equation*}
for all $u\geq u_0$, $\tv\geq \tv_0$ and $k\leq 1$.
\end{Prop}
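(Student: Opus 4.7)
The plan is to regard the equation satisfied by $Z^k\phi$ as a linear wave equation of the form $\Box_g\psi = a\psi$ with potential $a$ essentially equal to $|\phi|^{p-1}$, so that Corollary \ref{L1L2} can be invoked directly. For $k=0$, the equation $\Box_g\phi = |\phi|^{p-1}\phi$ is manifestly of this form with $a = |\phi|^{p-1}$. For $k=1$, since every $Z\in\Ga$ is Killing on Schwarzschild, one has $[Z,\Box_g]=0$ and hence
$$\Box_g(Z\phi) = Z\bigl(|\phi|^{p-1}\phi\bigr) = p|\phi|^{p-1}(Z\phi),$$
which is again of the form $\Box_g\psi = a\psi$ with $a = p|\phi|^{p-1}$.

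The key ingredient is therefore a spacetime bound $\||\phi|^{p-1}\|_{L^k_{\tv}L^l_x(\mathcal{M})}\lesssim 1$ for some dual admissible pair $(k,l)$ satisfying $\frac{1}{k}+\frac{3}{l}=2$ and $1<k<2<l<3$. I would establish this (this is essentially the content of Lemma \ref{k22} referenced in the introduction) by interpolating three estimates. First, Proposition \ref{mainenergy} yields $\iint \frac{|\phi|^{p+1}}{r}\,d\vol \lesssim \cE_0$ together with conservation of $E[\phi,p]$, which gives uniform-in-$\tv$ control of $\|\phi(\tv)\|_{L^{p+1}_x}^{p+1}$. Second, $E[\phi](\tSi_{\tv})\lesssim 1$ combined with Sobolev embedding gives $\|\phi(\tv)\|_{L^6_x}\lesssim 1$. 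Third, Proposition \ref{divsk1} provides the $r$-weighted spacetime integrability $\iint_{r\geq R} r^{\gamma-1}|\phi|^{p+1}\,d\vol \lesssim 1$ for any $0<\gamma<\min(2,p-1)$. Interpolating the two slice-wise bounds against the weighted spacetime $L^{p+1}$ bound (to absorb the shortfall when $(p-1)l>6$) pins down the admissible range of $(k,l)$. The algebraic condition $p>\frac{1+\sqrt{17}}{2}$, equivalent to $p(p-1)>4$, is precisely what is needed for the resulting range of $(k,l)$ to intersect the admissible strip $1<k<2<l<3$.

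With the coefficient bound in hand, the remainder of the proof is routine: Corollary \ref{L1L2} is applied to $\psi = Z^k\phi$ with $a$ as above, once for the double-null foliation $\Si_u$ and once for the foliation $\tSi_{\tv}$. The corollary's conclusion is exactly the asserted energy flux, integrated local energy, and Strichartz norm for $Z^k\phi$, controlled by $E[Z^k\phi]$ on the initial hypersurface, which is absorbed into $E[Z^{\leq k}\phi](\Sigma_{u_0})$ or $E[Z^{\leq k}\phi](\tSi_{\tv_0})$. I expect the main obstacle to lie entirely in the coefficient estimate: the interpolation at $p$ close to $5$, where $(p-1)l$ forces higher powers than the Sobolev slice bound alone can provide, is the delicate step, and the saving comes from the $r$-weighted bulk integrability produced by the $r^\gamma L$ multiplier of Proposition \ref{divsk1}, a global feature invisible at the pure energy level.
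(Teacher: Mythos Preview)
Your overall reduction is correct and matches the paper exactly: write $\Box_g(Z^k\phi)=a\,Z^k\phi$ with $a\sim|\phi|^{p-1}$, verify $a\in L^k_{\tv}L^l_x(\mathcal{M})$ for some pair with $\tfrac1k+\tfrac3l=2$, $1<k<2$, and invoke Corollary~\ref{L1L2} (equivalently Propositions~\ref{prop:IS:v}/\ref{L1L2foru} combined with Lemma~\ref{main}). The paper does precisely this.

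The gap lies in your sketch of the coefficient bound (Lemma~\ref{k22}). The three ingredients you list --- the slice bounds $\|\phi(\tv)\|_{L^{p+1}_x}\lesssim1$, $\|\phi(\tv)\|_{L^6_x}\lesssim1$, and the $r$-weighted spacetime $L^{p+1}$ bound from Proposition~\ref{divsk1} --- all have spatial integrability at most $L^6_x$. No interpolation among them can place $\phi$ in $L^a_{\tv}L^b_x$ with $b>6$. But the target $|\phi|^{p-1}\in L^k_{\tv}L^l_x$ with $l>2$ requires $\phi\in L^{(p-1)k}_{\tv}L^{(p-1)l}_x$ with $(p-1)l>2(p-1)$, and for $p>4$ this forces $(p-1)l>6$. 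Thus your interpolation cannot close in the upper range of $p$; contrary to your diagnosis, the $r$-weighted bulk term does not supply the missing spatial integrability --- it supplies time integrability at low spatial exponent.

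The paper's argument for Lemma~\ref{k22} inserts an additional bootstrap step you omit: from the spacetime bound $\phi\in L^{k_0(p-1)}_{\tv,x}$ (obtained via Proposition~\ref{divsk1} and the integrated local energy estimate), one feeds $\|\Box_g\phi\|_{L^1_{\tv}L^2_x}\le\|\phi\|_{L^{k_0(p-1)}_{\tv,x}}^{(p-1)\theta}\|\phi\|_{L^{p_1}_{\tv}L^{q_1}_x}^{(p-1)(1-\theta)+1}$ into the Strichartz estimate of Proposition~\ref{prop:IS:v} and closes via Lemma~\ref{main1}, yielding $\|\phi\|_{L^{p_1}_{\tv}L^{q_1}_x}\lesssim1$ for \emph{all} admissible pairs. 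This is what provides the high spatial integrability. The desired $L^k_{\tv}L^l_x$ bound on $|\phi|^{p-1}$ then follows by interpolating this Strichartz control against the $L^{k_0(p-1)}_{\tv,x}$ bound (for $p\le3$) or against $L^\infty_{\tv}L^{p+1}_x$ (for $p>3$).
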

Note that $\Box_g$ commutes with the vector fields $Z\in \Ga$. The above spacetime bounds rely on the following spacetime estimate for the nonlinearity.

\begin{Lem}
\label{k22}
If $\frac{1+\sqrt{17}}{2}<p<5$, there exists pair $(k, l)$  such that
  \begin{equation*}
  1<k<2, \quad \frac{1}{k}+\frac{3}{l}=2, \quad \||\phi|^{p-1}\|_{L_{\widetilde{v}}^{k}L_x^{l}(\mathcal{M})}\lesssim 1.
\end{equation*}
Here by our notation, the implicit constant also relies on the initial energy $E[\phi, p](\tSi_0)$.
\end{Lem}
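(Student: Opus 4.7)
The plan is to deduce the spacetime bound by combining the integrated local energy estimate of Proposition~\ref{mainenergy} with the $r$-weighted energy estimate of Proposition~\ref{divsk1}, and then interpolating against the uniform energy and Sobolev bounds available on each time slice.

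First I would extract a weighted spacetime $L^{p+1}$ bound on $\phi$. Proposition~\ref{mainenergy} gives
\[
\iint_{\mathcal{M}} r^{-1}|\phi|^{p+1}\,d\vol\lesssim \mathcal{E}_0,
\]
while Proposition~\ref{divsk1}, applied with any $\gamma\in(0,\min(2,p-1))$, gives the far-region bound
\[
\iint_{r\geq R} r^{\gamma-1}|\phi|^{p+1}\,d\vol\lesssim 1.
\]
Since the hypothesis $p>(1+\sqrt{17})/2$ is strictly stronger than $p>2$, we may take $\gamma>1$, so that these bounds combine (after rewriting $d\vol=r^2\,drd\omega d\tv$) into both the unweighted spacetime estimate $\iint |\phi|^{p+1}\,dx\,d\tv\lesssim 1$ and the sharper weighted version $\iint r^{\lambda}|\phi|^{p+1}\,dx\,d\tv\lesssim 1$ for some $\lambda\in(0,\min(1,p-2))$. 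From the conserved energy $E[\phi,p](\tSi_{\tv})\leq \mathcal{E}_0$, I also have the uniform-in-$\tv$ bounds $\|\phi(\tv)\|_{L^{p+1}(dx)}\lesssim 1$ and, using the Hardy inequality together with the Sobolev embedding $\dot H^1(\mathbb R^3)\hookrightarrow L^6$, $\|\phi(\tv)\|_{L^6(dx)}\lesssim 1$.

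Next I interpolate. Writing $a=(p-1)k$ and $b=(p-1)l$, the target bound $\||\phi|^{p-1}\|_{L^k_{\tv}L^l_x(\mathcal{M})}\lesssim 1$ is equivalent to $\|\phi\|_{L^a_{\tv}L^b_x}\lesssim 1$ with scaling $\tfrac{1}{a}+\tfrac{3}{b}=\tfrac{2}{p-1}$ and the desired ranges $a\in(p-1,2(p-1))$, $b\in(2(p-1),3(p-1))$. Pointwise in $\tv$, log-convexity of $L^p$-norms yields
\[
\|\phi(\tv)\|_{L^b_x}\leq \|\phi(\tv)\|_{L^{p+1}_x}^{\theta}\,\|\phi(\tv)\|_{L^6_x}^{1-\theta}
\]
for $b\in[p+1,6]$, and Hölder in $\tv$ (using the uniform $L^\infty_{\tv}L^6_x$ bound to handle the $L^6$-factor and closing the $L^{p+1}$-factor against the spacetime $L^{p+1}$ bound from the previous step) gives $\|\phi\|_{L^a_{\tv}L^b_x}\lesssim 1$ provided the balance $a\theta\geq p+1$ is met. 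The resulting system of inequalities on $(a,b,\theta,p)$ -- the scaling relation, the interpolation range $b\in[p+1,6]$, the openness conditions $k\in(1,2)$ and $l\in(2,3)$, and the balance $a\theta\geq p+1$ -- is consistent precisely when a polynomial inequality in $p$ holds, and a direct algebraic manipulation identifies this inequality as $p^2-p-4>0$, i.e.\ $p>(1+\sqrt{17})/2$. For $p$ in the lower subrange, one can exhibit an explicit admissible pair such as $l=\tfrac{3(7-p)}{9-p}$, $k=\tfrac{l}{2l-3}$; for $p$ closer to $5$ the unweighted interpolation no longer suffices, and one must carry the positive weight $r^{\lambda}$ from the $r$-weighted estimate through the interpolation in order to close it.

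The main obstacle is this interpolation bookkeeping: the dimensional counting is tight and the admissible set of $(k,l)$ is nonempty exactly at the stated threshold. Indeed, the threshold $(1+\sqrt{17})/2$ is the dimensional cutoff at which the $r$-weighted spacetime $L^{p+1}$ bound stops being strictly scale-subcritical with respect to the target scaling $\tfrac{1}{k}+\tfrac{3}{l}=2$ of $|\phi|^{p-1}$; below it, no choice of $\gamma<\min(2,p-1)$ gives enough positive weight to close the interpolation. Demonstrating that the threshold is sharp for this method, and verifying the admissibility $a\theta\geq p+1$ simultaneously with the required openness of $(k,l)$, is where the computation is delicate.
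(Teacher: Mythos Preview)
Your scheme does not close. The interpolation you propose uses only the spacetime $L^{p+1}$ bound and the slice bounds $\|\phi(\tv)\|_{L^{p+1}_x},\|\phi(\tv)\|_{L^6_x}\lesssim 1$, and your own bookkeeping gives the constraint $a\theta\ge p+1$, which after the algebra you indicate is exactly $b\le b^*(p):=\tfrac{3(p-1)(7-p)}{9-p}$ (this is $(p-1)$ times your explicit $l$). But the target requires $b=(p-1)l>2(p-1)$, and one checks $b^*(p)\le 2(p-1)$ for $p\ge 3$ and $b^*(p)<p+1$ for $p<3$; in the first case there is no $b$ in $(2(p-1),b^*(p)]$, in the second the interpolation range $b\in[p+1,6]$ is already disjoint from $b\le b^*(p)$. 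So the unweighted interpolation yields no admissible $(k,l)$ for any $p$, and in particular your explicit pair for the ``lower subrange'' has $b<p+1$, outside the range your interpolation can reach. Carrying the positive weight $r^\lambda$ on the $L^{p+1}$ spacetime norm does not rescue this: it strengthens the control at large $r$ but does not produce the lower spatial integrability $b<p+1$ needed for small $p$, nor the high integrability $b>6$ needed for $p>4$.

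The paper's proof supplies two ingredients you omit. For $\tfrac{1+\sqrt{17}}{2}<p\le 3$ it interpolates not against the slice $L^6$ bound but against the \emph{spacetime weighted $L^2$} bound $\iint r^{\gamma_0-3}|\phi|^2\,d\vol\lesssim 1$ coming from Proposition~\ref{divsk1}; this is what drives the threshold $p^2-p-4>0$ and yields a genuine $L^{k_0(p-1)}_{\tv,x}$ bound with $k_0<2$. Then, for all $p$, the paper runs a Strichartz bootstrap (Proposition~\ref{prop:IS:v} together with Lemma~\ref{main1}) to obtain $\|\phi\|_{L^{p_1}_{\tv}L^{q_1}_x}\lesssim 1$ for every admissible $(p_1,q_1)$, in particular with $q_1$ arbitrarily large; this is what ultimately places $|\phi|^{p-1}$ in $L^k_{\tv}L^l_x$ with $l>2$. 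Neither step is captured by your slice-$L^6$ interpolation.
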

\begin{proof}
Let's first show that there exists some  $k_0\in (1, 2)$ such that $\phi \in L_{\tv}^{k_0(p-1)} L_x^{k_0(p-1)}(\mathcal{M})$. For the case when $3<p<5$, we simply rely on the $r$-weighted energy estimate of Proposition \ref{divsk1} with $\ga=1$ together with the integrated local energy estimate of Proposition \ref{mainenergy}. We can show that
\begin{align*}
\iint_{\mathcal{M}}|\phi|^{p+1}d\vol =\iint_{\mathcal{M}\cap \{r\leq R\}}|\phi|^{p+1}d\vol +\iint_{\mathcal{M}\cap \{r\geq R\}}|\phi|^{p+1}d\vol \les E[\phi, p](\tSi_{0}).
\end{align*}
In particular for this case we can take $k_0=\frac{p+1}{p-1}$.

For the case when $\frac{1+\sqrt{17}}{2}<p\leq 3$, let $a$, $k_0$, $\ga_0$ be constants such that
\begin{align*}
&\frac{a}{p+1}+\frac{k_0(p-1)-a}{2}=1,\quad 1<k_0<2,\quad 0<\ga_0<p-1,\quad  0<a<p+1,\\
& \frac{(\ga_0-1)(k_0(p-1)-2)}{p+1-k_0(p-1)}-(3-\ga_0)\geq 0.
\end{align*}
Note that left hand side of  the second inequality is increasing in terms of $k_0$ and $\ga_0$. The existence of such $k_0$ and $\ga_0$ follows by the fact that the left hand side of the second inequality is positive when $k_0=2$ and $\ga_0=p-1$. This is guaranteed by the condition condition that $p>\frac{\sqrt{17}+1}{2}$. For such $\ga_0$, the $r$-weighted energy estimate of Proposition \ref{divsk1} implies that
\begin{equation*}
\int_{\mathcal{M}} r^{\gamma_0-3}|\phi|^2+r^{\gamma_0-1}|\phi|^{p+1}d\vol
\lesssim E[\phi, p](\widetilde{\Sigma}_0).
\end{equation*}
We thus can bound that
\begin{align*}
\iint_{\mathcal{M}} |\phi|^{k_0(p-1)}d\vol &= \| |\phi|^{ a}r^{\frac{a(\ga_0-1)}{p+1}}|\phi|^{k_0(p-1)-a}
r^{-\frac{a(\ga_0-1)}{p+1}}\|_{L^1}\\
&\leq \| |\phi|^{ a}r^{\frac{a(\ga_0-1)}{p+1}} \|_{L^{\frac{p+1}{a}}} \| |\phi|^{k_0(p-1)-a}
r^{-\frac{a(\ga_0-1)}{p+1}}\|_{L^{\frac{2}{k_0(p-1)-a}}}\\
& = \| |\phi|^{ p+1} r^{\ga_0-1} \|_{L^{1}}^{\frac{a}{p+1}} \| |\phi|^{2}
r^{-\frac{(\ga_0-1)(k_0(p-1)-2)}{p+1-k_0(p-1)} }\|_{L^{1}}^{\frac{k_0(p-1)-a}{2}}\\
&\les E[\phi, p](\tSi_{0}).
\end{align*}
Hence in any case, we have shown that  $\phi \in L_{\tv}^{k_0(p-1)} L_x^{k_0(p-1)}(\mathcal{M})$ for some $k_0\in (1, 2)$.

Now for such constant $k_0<2$, let $\theta $, $p_1$, $ q_1$ be constants such that
\begin{align*}
 (\frac{4}{k_0}-\frac{p-1}{2} )\theta=\frac{5-p}{2}, \quad  \frac{\theta}{k_0}+\frac{(p-1)(1-\theta)+1}{p_1}=1, \quad
  \frac{\theta}{k_0}+\frac{(p-1)(1-\theta)+1}{q_1}=\frac{1}{2}.
\end{align*}
One can check that $0<\theta<1$ and $(p_1, q_1)$ is admissible, that is, $\frac{1}{p_1}+\frac{3}{q_1}=\frac{1}{2}$.
By using H\"{o}lder's inequality, we can bound that
\begin{equation*}
  \|\Box_g \phi\|_{L_{\widetilde{v}}^{1}L_x^{2}(\tcD_{\tv_1, \tv_2})}= \||\phi|^{p-1}\phi \|_{L_{\widetilde{v}}^{1}L_x^{2}(\tcD_{\tv_1, \tv_2})}\leq
  \|\phi\|_{L_{\widetilde{v}}^{k_0(p-1)}L_x^{k_0(p-1)}(\tcD_{\tv_1, \tv_2})}^{(p-1)\theta}
  \|\phi\|_{L_{\widetilde{v}}^{p_1}L_x^{q_1}(\tcD_{\tv_1, \tv_2})}^{(p-1)(1-\theta)+1},
\end{equation*}
Hence in view of the Strichartz estimate of Proposition \ref{prop:IS:v}, for any admissible pair $(\tilde{p}_1, \tilde{q}_1)$, we derive that
\begin{align*}
    %\sup_{\widetilde{v}\in [\widetilde{v}_1,\widetilde{v}_2]}E[\phi](\widetilde{\Sigma}_{\widetilde{v}})+
    \|\phi\|^2_{L_{\tv}^{\tilde{p}_1}L_{x}^{\tilde{q}_1}(\tcD_{\tv_1, \tv_2})}
    \les E[\phi](\widetilde{\Sigma}_{\widetilde{v}_1})+  \|\phi\|_{L_{\widetilde{v}}^{k_0(p-1)}L_x^{k_0(p-1)}(\tcD_{\tv_1, \tv_2})}^{ 2(p-1)\theta}
  \|\phi\|_{L_{\widetilde{v}}^{p_1}L_x^{q_1}(\tcD_{\tv_1, \tv_2})}^{ 2((p-1)(1-\theta)+1)}.
\end{align*}
Since the energy $E[\phi](\widetilde{\Sigma}_{\widetilde{v}_1}) $ is uniformly bounded by the initial energy $E[\phi, p](\tSi_0)$, in view of Lemma \ref{main1} and by letting $(\tilde{p}_1, \tilde{q}_1)=(p_1, q_1)$, we in particular conclude that
\begin{align*}
\|\phi\|_{L_{\widetilde{v}}^{p_1}L_x^{q_1}(\mathcal{M})}\les 1,
\end{align*}
which in turn shows that
\begin{align*}
\|\phi\|_{L_{\widetilde{v}}^{\tilde{p}_1}L_x^{\tilde{q}_1}(\mathcal{M})}\les 1
\end{align*}
for all admissible pair $(\tilde{p_1}, \tilde{q}_1)$.

With these preparations, we are now ready to prove the Lemma. For the case when
$\frac{1+\sqrt{17}}{2}<p\leq 3$, the Lemma follows by interpolation. Indeed, since $|\phi|^{p-1}\in L^{k_0}L^{k_0}$ for some $k_0<2$ (by our construction, for this case $k_0$ can be sufficiently close to $2$)  and  $|\phi|^{p-1}\in L^{\frac{\tilde{p}_1}{p-1}}L^{\frac{\tilde{q}_1}{p-1}}$ for all admissible pair $(\tilde{p}_1, \tilde{q}_1)$, we conclude that $|\phi|^{p-1}\in L^k L^l$ for all $0\leq \theta\leq 1$ with
  \begin{equation*}
    \frac{1}{k}=\frac{\theta}{k_0}+\frac{(1-\theta)(p-1)}{\tilde{p}_1},
    \quad
    \frac{1}{l}=\frac{\theta}{k_0}+\frac{(1-\theta)(p-1)}{\tilde{q}_1}.
  \end{equation*}
  Let
  \begin{align*}
  \tilde{p}_1=2(p-1),\quad \tilde{q}_1=\frac{6(p-1)}{p-2},\quad \theta =\frac{5-p}{\frac{8}{k_0}-p+1}.
    \end{align*}
    Then we have
  \begin{align*}
  \frac{1}{k}=\frac{1}{2}+(\frac{1}{k_0}-\frac{1}{2})\theta, \quad \frac{1}{k}+\frac{3}{l}=2.
  \end{align*}
  Since $k_0<2$ and $p\leq 3$, we see that $0<\theta <1$. Moreover as we can choose $k_0<2$ sufficiently close to $2$, we thus conclude that $\f12 <\frac{1}{k}<1$, that is $1<k<2$. Hence the Lemma holds for the case when $p\leq 3$.

 For the case when $3<p<5$, the classical energy estimate and the $r$-weighted energy estimate with $\ga=1$ show that
\begin{align*}
\|\phi\|_{L_{\tv}^\infty L_x^{p+1}}\les 1,\quad \|\phi\|_{L_{\tv}^{p+1} L_x^{p+1}}\les 1.
\end{align*}
In particular $|\phi|^{p-1}\in L_{\tv}^{\frac{s}{p-1}}L_x^{\frac{p+1}{p-1}}$ for all $s\geq p+1$. Similarly as $|\phi|^{p-1}\in L^{\frac{\tilde{p_1}}{p-1}}L^{\frac{\tilde{q_1}}{p-1}}$ for all admissible pair $(\tilde{p}_1,\tilde{q}_1)$, we see that $|\phi|^{p-1}\in L^k L^l$ with
  \begin{equation*}
    \frac{1}{k}=\frac{\theta(p-1)}{s}+\frac{(1-\theta)(p-1)}{\tilde{p}_1},
    \quad
    \frac{1}{l}=\frac{\theta(p-1)}{p+1}+\frac{(1-\theta)(p-1)}{\tilde{q}_1}
  \end{equation*}
The condition $\frac{1}{k}+\frac{3}{l}=2$ is then reduced to
 $$\frac{5-p}{2} =\theta\Big(\frac{p-1}{s}
  +\frac{3(p-1)}{p+1}-\frac{p-1}{2} \Big).$$
  Since $p>3$, we can choose  $s\in [p+1, 2(p-1))$, which ensures that
  $\theta\in(0,1)$. It then remains to show that $1<k<2$ for some $s$ and admissible pair $(\tilde{p_1}, \tilde{q}_1)$. This can be realized by letting  $\tilde{p}_1=2(p-1)$ and $s$ sufficiently close to $2(p-1)$. We hence finished the proof for the lemma.
\end{proof}

We now can prove Proposition \ref{H2}. Note that for $k\leq 1$
\begin{align*}
|\Box_g Z^k \phi| =|Z^k(|\phi|^{p-1}\phi)|\les |\phi|^{p-1}|Z^k\phi|.
\end{align*}
As for the foliation $\tSi$, in view of Proposition \ref{prop:IS:v}, for any admissible pair $(p_1, q_1)$, it holds that
\begin{equation*}
    \sup_{\widetilde{v}\in [\widetilde{v}_1,\widetilde{v}_2]}E[Z^k\phi](\widetilde{\Sigma}_{\widetilde{v}} )
    +\|Z^k\phi\|_{LE(\tcD_{\tv_1, \tv_2})}+\|Z^k\phi\|^2_{L_{\tv}^{p_1}L_{x}^{q_1}(\tcD_{\tv_1, \tv_2})} \lesssim E[Z^k \phi ](\widetilde{\Sigma}_{\widetilde{v}_1})
    +\|\Box_g Z^k \phi\|^2_{L_{\tv}^1L^2_x(\tcD_{\tv_1, \tv_2})}
\end{equation*}
for all $\tv_2\geq \tv_1$. The above Lemma shows that $|\phi|^{p-1}\in L^k_{\tv} L^l_{x}$ for some $(k, l)$ verifying $\frac{1}{k}+\frac{3}{l}=2$. The second estimate of Proposition \ref{H2} adapted to the foliation $\tSi$ then follows from Lemma \ref{main}. The first estimate associated to the foliation $\Si$ follows in a similar way in view of Proposition \ref{L1L2foru}.

As  $Z\in \{\partial_{\widetilde{v}}, \Omega_{ij}\}$, a standard trace theorem (see \eqref{eq:trace}) and Sobolev embedding on the unit sphere give  a rough pointwise decay estimate for the solution
  \begin{equation*}
|\phi|\les   r^{-\frac{1}{2}}\sqrt{\cE_1}.
\end{equation*}

\section{Energy flux decay}
\label{rw}
The previous section only gives the spatial decay of the solution as the commuting vector fields $Z$ do not contain weights in time. To obtain time decay of the solution, we rely on the method of Dafermos-Rodnianski in \cite{newapp}. In this section, we show the energy flux decay for $E[\phi](\Si_u)$.
\begin{Prop}
\label{ephidecay0}
  If  $\frac{1+\sqrt{17}}{2}<p<5$, then for all $1<\ga_0<\min\{2, p-1\}$ and $N> 2$ we have
  \begin{align*}
  E[\phi, p](\Sigma_u)\lesssim
  u_+^{-\gamma_0}  +u_+^{- N/2-1} E[Z^{\leq 1}\phi](\Si_u).
\end{align*}
Here $u_+=\sqrt{ 4+u^2}$ and the implicit constant also relies on $N$.
\end{Prop}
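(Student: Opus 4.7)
The plan is to derive the stated decay via the Dafermos--Rodnianski $r$-weighted energy hierarchy combined with a dyadic pigeonhole argument. The principal obstacle is the trapping degeneracy at the photon sphere $\{r = 3M\}$ appearing as the factor $(1-3M/r)^2$ in the integrated local energy $\mathcal{W}[\phi,p]$ of Proposition \ref{mainenergy}; this will be handled using the uniform $E[Z\phi]$ bound from Proposition \ref{H2} combined with the $L^2$ interpolation inequality recorded in the introduction.

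Setting $X_\gamma(u) := \int_{\mathcal{H}_u} r^\gamma|L(r\phi)|^2 \, dv \, d\omega$, the first step is to apply Proposition \ref{divsk1} with weight $\gamma = \gamma_0 \in (1, \min\{2, p-1\})$ to obtain
\begin{equation*}
X_{\gamma_0}(u_2) + \int_{u_1}^{u_2} X_{\gamma_0 - 1}(u) \, du \lesssim X_{\gamma_0}(u_1) + E[\phi, p](\Sigma_{u_1}).
\end{equation*}
To convert this into decay for $E[\phi, p](\Sigma_u)$ itself, one must integrate the full flux in $u$: the $\{r \geq R\} \subset \mathcal{H}_u$ contribution is dominated by $X_{\gamma_0 - 1}(u)$ as soon as $\gamma_0 > 1$, while the interior contribution is controlled by $\iint \mathcal{W}[\phi, p] \, d\vol \lesssim E[\phi, p](\Sigma_{u_1})$ from Proposition \ref{mainenergy}, modulo the degenerate $(|\partial_{\tv}\phi|^2 + |\slashed\nabla\phi|^2)$ contribution near $\{r = 3M\}$.

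For this degenerate contribution I would localize with a smooth cutoff $\chi$ supported in $\{|r^*| \leq 1/2\}$ and apply, pointwise in $u$, the $L^2$ interpolation inequality with $q = 12$,
\begin{equation*}
\|f\|_{L^2}^2 \lesssim (\ln u_+)^2 \, \||\ln|r^*||^{-1} f\|_{L^2}^2 + u_+^{-2N} \|f\|_{L^{12}}^2,
\end{equation*}
to $f = \chi (\partial_{\tv}\phi, \slashed\nabla \phi)$. Integrating in $u$ over a dyadic interval $[u_1, 2u_1]$, the first term is absorbed by the log-loss local energy norm of Proposition \ref{prop:IS:v} (which is finite thanks to Lemma \ref{k22}), while the $L^{12}$ norm in the second is bounded via Sobolev embedding by $E[Z^{\leq 1}\phi](\Sigma_u) \lesssim \cE_1$ from Proposition \ref{H2}. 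A final Cauchy--Schwarz and pigeonhole in $u$ produce the trapping correction $u_+^{-N/2-1} E[Z^{\leq 1}\phi](\Sigma_u)$.

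Assembling these ingredients, a first dyadic pigeonhole on $[u, 2u]$, combined with the monotonicity of $E[\phi, p](\Sigma_u)$ from Proposition \ref{mainenergy} used to extend the bound from the good slice to all nearby $u$, yields $E[\phi, p](\Sigma_u) \lesssim u_+^{-1}$. Iterating the hierarchy through weights $\gamma_0, \gamma_0 - 1, \ldots$, using the decay of $E[\phi, p](\Sigma_u)$ at each step to upgrade the bound on $X_\gamma(u)$, then upgrades the decay all the way to $u_+^{-\gamma_0}$ modulo the trapping error $u_+^{-N/2-1} E[Z^{\leq 1}\phi](\Sigma_u)$. The hard part will be the bookkeeping at the trapping step: the logarithmic losses from the interpolation inequality must be absorbed by leaving a small margin below the endpoint $\min\{2, p-1\}$, and one must verify that the trapping exponent $N/2 + 1$ remains strictly larger than $\gamma_0$ throughout the iteration, which is guaranteed by the assumption $N > 2$ together with $\gamma_0 < 2$.
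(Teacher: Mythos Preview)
Your overall strategy matches the paper's: the $r$-weighted hierarchy of Proposition \ref{divsk1}, a dyadic pigeonhole argument, and the interpolation Lemma \ref{lp} near $r=3M$ to circumvent trapping. However, there is a genuine gap in your handling of the $L^{12}$ term, and a secondary confusion in the description of the iteration.

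The gap is in the sentence ``the $L^{12}$ norm in the second is bounded via Sobolev embedding by $E[Z^{\leq 1}\phi](\Sigma_u)$.'' In three space dimensions $H^1$ embeds only into $L^6$, not $L^{12}$; the pointwise-in-time bound $\|(\partial_{\tv},\slashed\nabla)\phi\|_{L^{12}_x}^2 \lesssim E[Z^{\leq 1}\phi](\Sigma_u)$ is simply false. The paper instead uses the \emph{spacetime} Strichartz estimate from Proposition \ref{H2} with the admissible pair $(4,12)$: one applies H\"older in $u$ to write
\[
\int_{u_1}^{u_2} u_+^{-5N/6}\|(\partial_{\tv},\slashed\nabla)\phi\|_{L^{12}_x}^2\,du \lesssim (u_1)_+^{-5N/6+1/2}\|Z\phi\|_{L^4_{\tv}L^{12}_x}^2 \lesssim (u_1)_+^{-N/2}E[Z^{\leq 1}\phi](\Sigma_{u_1}),
\]
which is where the exponent $-N/2$ (and hence $-N/2-1$ after the final pigeonhole) actually comes from. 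Note also that the exponent in Lemma \ref{lp} with $q=12$ is $t^{-5N/6}$, not $t^{-2N}$ as you wrote.

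The secondary issue is your phrase ``iterating the hierarchy through weights $\gamma_0, \gamma_0-1, \ldots$.'' The paper does not descend through a chain of weights. It uses only $\gamma=\gamma_0$, pigeonholes to obtain a dyadic sequence $u_k$ with $X_{\gamma_0-1}(u_k)\lesssim u_k^{-1}$, and then \emph{interpolates} against the uniform bound $X_{\gamma_0}(u_k)\lesssim 1$ to get $X_1(u_k)\lesssim u_k^{1-\gamma_0}$. The quantity $X_1$ is what controls $\int_{u_1}^{u_2}E[\phi,p](\Sigma_u)\,du$ (via the $\gamma=1$ estimate of Proposition \ref{divsk1}). Two passes of the pigeonhole then suffice: a first pass on $[u/2,u]$ gives the weak decay $u_+^{-1}(\ln u_+)^2$, and a second pass on the dyadic sequence, feeding in both $X_1(u_k)\lesssim u_k^{1-\gamma_0}$ and the weak decay, yields the full $u_+^{-\gamma_0}$.
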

To prove this proposition we need the following lemma.
\begin{Lem}
\label{lp}
 Let  $f(x)$ be a function supported  in $ \{\frac{1}{2}\leq |x|\leq \frac{3}{2}, x\in {\mathbb{R}}^{3}\}$. For constants $q>2$, $N>1$ and $\forall\  t\geq2$, we have
  \begin{equation*}
  \|f\|_{L^2(R^3)}
  \leq  C\Big(\ln t\||\ln { ||x|-1|}|^{-1}f\|_{L^2(R^3)}+t^{- N\frac{q-2}{2q}}\|f\|_{L^q(R^3)}\Big)
  \end{equation*}
  for some constant $C$ depending only on $q$ and $N$.
\end{Lem}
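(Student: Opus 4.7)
The plan is to dyadically split $\mathbb{R}^3$ at scale $t^{-N}$ around the sphere $|x|=1$, since that is precisely where the weight $|\ln\,||x|-1||^{-1}$ degenerates. Namely, I would write $f=f_1+f_2$ with
\[
f_1=f\,\mathbf{1}_{||x|-1|\geq t^{-N}},\qquad f_2=f\,\mathbf{1}_{||x|-1|<t^{-N}},
\]
and bound $\|f_1\|_{L^2}$ by the weighted $L^2$ norm on the right-hand side, while bounding $\|f_2\|_{L^2}$ by the $L^q$ norm via H\"older's inequality using that the annular shell of width $t^{-N}$ has small Lebesgue measure.

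For the first piece, on the support of $f_1$ we have $t^{-N}\leq ||x|-1|\leq 1/2$, so $\ln||x|-1|\in[-N\ln t,\ln(1/2)]$. Since $t\geq 2$ and $N\geq 1$ imply $N\ln t\geq \ln 2=|\ln(1/2)|$, we conclude $|\ln||x|-1||\leq N\ln t$ on this set. Thus pointwise $|f_1|\leq N\ln t\cdot|\ln||x|-1||^{-1}|f|$, and integration gives
\[
\|f_1\|_{L^2(\mathbb{R}^3)}\leq N\ln t\,\big\||\ln||x|-1||^{-1}f\big\|_{L^2(\mathbb{R}^3)}.
\]

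For the second piece, converting to radial coordinates shows that the shell $\{x\in\mathbb{R}^3:||x|-1|<t^{-N}\}$ has Lebesgue measure $\lesssim t^{-N}$. Applying H\"older's inequality with exponents $(q/2, q/(q-2))$ yields
\[
\|f_2\|_{L^2(\mathbb{R}^3)}^2\leq \big|\{||x|-1|<t^{-N}\}\big|^{\frac{q-2}{q}}\|f\|_{L^q(\mathbb{R}^3)}^2\lesssim t^{-N\frac{q-2}{q}}\|f\|_{L^q(\mathbb{R}^3)}^2,
\]
hence $\|f_2\|_{L^2}\lesssim t^{-N(q-2)/(2q)}\|f\|_{L^q}$. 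Summing the two contributions is exactly the claimed inequality, with constant depending only on $q$ and $N$.

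I do not anticipate a real obstacle here: the argument is a standard good-set/bad-set split. The only mildly delicate point is verifying the upper bound $|\ln||x|-1||\leq N\ln t$ uniformly on the good set, which requires the support hypothesis $|x|\in[1/2,3/2]$ together with the assumptions $t\geq 2$, $N>1$ to absorb the $\ln 2$ coming from the outer endpoint. Everything else is purely computational.
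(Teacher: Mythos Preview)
Your proof is correct and essentially identical to the paper's: the same split at scale $t^{-N}$ around $|x|=1$, the same pointwise bound $|\ln||x|-1||\leq N\ln t$ on the good set, and the same H\"older estimate on the thin shell. The only difference is notational (the paper labels the two pieces $\Omega_1,\Omega_2$ rather than $f_1,f_2$, with the roles swapped).
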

\begin{proof}
  We divide the region $ \{\frac{1}{2}\leq |x|\leq \frac{3}{2}\}$ into two parts  $\Omega_1=\{ ||x|-1|\leq t^{-N}\}$ and $\Omega_2=\{t^{-N}\leq  ||x|-1|\leq \frac{1}{2}\}$.
Then
  \begin{align*}
    \|f\|_{L^2(R^3)}&=\|f\|_{L^2(\Omega_1)}+\|f\|_{L^2(\Omega_2)}
    \\&\leq \|1\|_{L^{\frac{2q}{q-2}}(\Omega_1)}\|f\|_{L^q(\Omega_1)}+N\|\ln t|\ln { ||x|-1|}|^{-1}f\|_{L^2(\Omega_2)}
    \\&
    \leq C(t^{- N\frac{q-2}{2q}}\|f\|_{L^q(R^3)}+\ln t\||\ln { ||x|-1|}|^{-1}f\|_{L^2(R^3)}).
  \end{align*}
\end{proof}
We now can prove the above Proposition.
\begin{proof}[\textbf{Proof of Proposition \ref{ephidecay0}}]
In the $r$-weighted energy estimate in Proposition \ref{divsk1}, let $\gamma=\gamma_0$ and $u_1=-\f12 R^{*}$. Since for simplicity we have assumed that the data are supported in $\{|x|\leq R\}$ on $\tSi_0$, the finite speed of propagation implies that the solution vanishes on $\H_{-\f12 R^*}$. We therefore derive that
\begin{equation*}
\begin{split}
\int_{-\f12 R^{*}}^{\infty}&
\int_{\mathcal{H}_u}
r^{\gamma_0-1} |L(r\phi)|^2 dvd\omega du\les E[\phi, p](\widetilde{\Sigma}_0),\quad  \int_{\mathcal{H}_{u}}r^{\gamma_0}|L(r\phi)|^2 dvd\omega
 \lesssim E[\phi, p](\widetilde{\Sigma}_0)
\end{split}
\end{equation*}
for all $u\geq -\f12 R^{*}$, from which we can extract a dyadic sequence $\{u_k\}_{3}^{\infty}$ such that
\begin{equation*}
  u_3=R^{*},\quad 2u_k\leq u_{k+1}\leq \Lambda u_k,\quad
  \int_{\mathcal{H}_{u_k}}r^{\gamma_0-1}|L(r\phi)|^2dvd\omega\lesssim u_k^{-1} E[\phi,p](\widetilde{\Sigma}_0)
\end{equation*}
for some constants $\Lambda$ depending only on $p$ and $\gamma_0$.  Since $1<\ga_0<2$,  interpolation then leads to
\begin{equation}
\label{eq:ukdecay}
\int_{\mathcal{H}_{u_k}}r|L(r\phi)|^2dvd\omega\lesssim u_k^{1-\gamma_0} E[\phi,p](\widetilde{\Sigma}_0),\ k\geq 3.
\end{equation}
This controls the right hand side of the $r$-weighted energy estimate of Proposition \ref{divsk1}. To recover the classical energy, we need to bound the difference between $r |L\phi|^2$ and $|L(r\phi)|^2$. First we have
\begin{equation*}
  r^2|L\phi|^2=
   |L(r\phi)|^2
  -(1-\mu)^2|\phi|^2-2(1-\mu)r\phi L\phi=|L(r\phi)|^2-L(r(1-\mu)|\phi|^2)+\mu(1-\mu)|\phi|^2.
\end{equation*}
We therefore can bound that
\begin{equation*}
\begin{split}
  \int_{u_1}^{u_2}\int_{\mathcal{H}_u}|L\phi|^2 r^2dudvd\om&
  \lesssim
  \int_{u_1}^{u_2}\int_{\mathcal{H}_u} |L(r\phi)|^2+ \mu \phi^2 dudvd\om +\int_{2u_1+R^*}^{ 2u_2+R^*}\int_{r=R } r(1-\mu)|\phi|^2 d\om dt
  \\& \les \int_{u_1}^{u_2}\int_{\mathcal{H}_u} |L(r\phi)|^2+  \phi^2 dudvd\om
  +\int_{2u_1+R^{*}}^{2u_2+R^{*}}\int_{ r\leq R}|\partial_{r}\phi|^2+\phi^2drd\omega d\widetilde{v}.
\end{split}
\end{equation*}
The second term on the right hand side of the above inequality can be bounded by using the integrated local energy estimate.
Now using the logarithmic integrated local energy estimate of Proposition \ref{H2} and the $r$-weighted energy estimate of Proposition \ref{divsk1} with $\ga=1$, we can bound that
 \begin{equation*}
\begin{split}
  \int_{u_1}^{u_2}E[\phi, p](\Sigma_u)du
  &=\int_{u_1}^{u_2} \int_{r\leq R }|\partial\phi|^2  +|\phi|^{p+1}d\vol +\int_{u_1}^{u_2}\int_{\mathcal{H}_u}|L\phi|^2
  +|\slashed\nabla\phi|^2+{|\phi/r|^2}+ |\phi|^{p+1} d\vol\\
  &\les \int_{u_1}^{u_2} \int_{r\leq R }|\partial\phi|^2  +|\phi|^{p+1}d\vol +\int_{u_1}^{u_2}\int_{\mathcal{H}_u}|L(r\phi)|^2
  +r^2|\slashed\nabla\phi|^2+|\phi|^2+ r^2|\phi|^{p+1}  dudvd\om\\
   &\les \int_{u_1}^{u_2} \int_{|r*|\leq \f12  }|\partial_{\tv}\phi|^2  +|\nabb\phi|^{2}d\vol +\int_{\mathcal{H}_{u_1}}
  r|L(r\phi)|^2dvd\omega+E[\phi, p](\Sigma_{u_1}),
  \end{split}
\end{equation*}
Here without loss of generality, we may assume that $M>1$.
For the integral near the photon sphere, we use the above Lemma \ref{lp} together with the Strichartz estimate of Proposition \ref{H2}. In fact on $\Si_u\cap \{|r^*|\leq \f12 \}$, Lemma \ref{lp} shows that
\begin{align*}
 \int_{ |r^*|\leq \f12}|(\partial_{\tv}, \slashed\nabla)\phi|^2 dx
  \lesssim  \int_{ |r^*|\leq \f12 }|\ln u_{+}|^2|\ln |r^{*}||^{-2}|(\partial_{\tv},\slashed\nabla)\phi|^2 dx
 +u_{+}^{- 5 N/6} \|( \partial_{\tv},\slashed\nabla)\phi\|_{L_x^{12}( |r^*|\leq \f12)}^2.
\end{align*}
Note that $|1-\frac{3M}{r}|\geq  |r^*|$ when $|r^*|\leq \f12$. In particular, we can bound the first term on the right hand side of the above inequality by using the logarithmic integrated local energy estimate of Proposition \ref{H2}. More precisely, we can show that
\begin{align*}
\int_{u_1}^{u_2}\int_{ |r^*|\leq \f12 }|\ln u_{+}|^2|\ln |r^{*}||^{-2}|(\partial_{\tv},\slashed\nabla)\phi|^2 d\vol &\les |\ln (u_2)_+|^2  \int_{u_1}^{u_2}\int_{ |r^*|\leq \f12 } | 1-\ln |1-\frac{3M}{r}||^{-2}|(\partial_{\tv},\slashed\nabla)\phi|^2 d\vol\\
&\les |\ln (u_2)_+|^2 E[\phi ](\Si_{u_1}).
\end{align*}
For the second term we make use of the Strichartz estimate of Proposition \ref{H2} with admissible pair $(4, 12)$
$$
\int_{u_1}^{u_2}u_{+}^{- 5 N/6}\|(\partial_{\tv},\slashed\nabla)\phi\|_{L_x^{12}}^2 du
\lesssim (u_1)_{+}^{- 5 N/6+\f12 }E[Z^{\leq 1}\phi](\Sigma_{u_1})
\lesssim (u_1)_{+}^{- N/2}E[Z^{\leq 1}\phi]( \Sigma_{u_1}).
$$
 Combining these estimates, we have shown that
\begin{align*}
\int_{u_1}^{u_2}E[\phi, p](\Sigma_u)du
\lesssim
  |\ln (u_2)_{+}|^2E[\phi, p](\Sigma_{u_1})
  +\int_{\mathcal{H}_{u_1}}r|L(r\phi)|^2dvd\omega
  +(u_1)_{+}^{- N/2}E[Z^{\leq 1}\phi]( \Sigma_{u_1}).
\end{align*}
Since the energy flux $E[\phi, p](\Sigma_u)$ is decreasing up to a constant in the sense of Proposition \ref{mainenergy} and
\begin{align*}
\int_{\mathcal{H}_{u_1}}r|L(r\phi)|^2dvd\omega\les \int_{\mathcal{H}_{u_1}}r^{\ga_0}|L(r\phi)|^2dvd\omega \les E[\phi, p](\tSi_0),
\end{align*}
 we in particular derive that
\begin{equation*}
\begin{split}
  \frac{1}{2}u&E[\phi, p](\Sigma_{u})
  \lesssim \int_{\frac{1}{2}u}^{u}
  E[\phi, p](\Sigma_{u'})du'
     \lesssim
  (\ln u_{+})^2E[\phi](\widetilde{\Sigma}_{0})+u_+^{- N/2}E[Z^{\leq 1}\phi](\Si_{u}).
  \end{split}
\end{equation*}
for all $u>1$. Here we may constantly use the fact $E[\phi, p](\tSi_0)\les E[\phi](\tSi_0)$ about the initial data.
This leads to a weak decay estimate for the energy flux
\begin{equation*}
  E[\phi, p](\Sigma_u)\lesssim u_+^{-1} (\ln u_+)^2 E[\phi](\widetilde{\Sigma}_{0})+u_+^{- N/2-1}  E[Z^{\leq 1}\phi]( {\Sigma}_{u}),\quad \forall u\geq -\frac{R^*}{2}.
\end{equation*}
 Using this weak decay estimate together with the $r$-weighted energy flux \eqref{eq:ukdecay} decay on the time sequence $u_k$, we then can derive that
 \begin{align*}
  &(u_{k+1}-u_{k})E[\phi, p](\Sigma_{u_{k+1}})\lesssim \int_{u_{k}}^{u_{k+1}}E[\phi,p](\Sigma_{u})du \\
  &\lesssim \ln(u_{k+1})^2E[\phi,p](\Sigma_{u_k})+
  \int_{\mathcal{H}_{u_k}}r|L(r\phi)|^2dvd\omega
  +u_k^{- N/2}E[Z^{\leq 1}\phi](\widetilde{\Sigma}_{0})
  \\&
  \lesssim u_{k}^{1-\gamma_0}E[ \phi](\widetilde{\Sigma}_{0})+u_{k}^{- N/2}E[ Z^{\leq 1}\phi]( {\Sigma}_{u_k}).
\end{align*}
Here note that $1<\ga_0<2$. Since the sequence $u_k$ is dyadic, for general $u\in [u_k, u_{k+1}]$, we derive that
\begin{align*}
E[\phi, p](\Si_u)\les E[\phi, p](\Si_{u_{k}})&\les u_{k-1}^{-\ga_0}E[ \phi](\tSi_0)+u_{k-1}^{- N/2-1}E[Z^{\leq 1}\phi](\Si_{u_{k-1}})\\
&\les u_{+}^{-\ga_0}E[ \phi](\tSi_0)+u_{+}^{- N/2-1}E[Z^{\leq 1}\phi](\Si_u) .
\end{align*}
We thus finished the proof for Proposition \ref{ephidecay0}.
\end{proof}

\section{Higher order energy estimates}
To obtain improved decay estimates for the solution, we also need higher order energy decay estimates. Similar to the energy flux decay of $E[\phi](\Si_u)$ in the previous section, the decay of $E[Z\phi](\Si_u)$ relies on the boundedness of $E[Z^2\phi](\Sigma_u)$ due to the degeneracy of the integrated local energy estimate.
\subsection{Strichartz estimate for $Z^2\phi$}
We begin with the Strichartz estimates for the second order derivatives of the solutions.
Unlike the uniform spacetime bound for the lower order derivatives of the solution obtained in Proposition \ref{H2}, the second derivatives of the solution $Z^2\phi$ may grow in time.
\begin{Lem}
\label{Z2phi}
  For $\frac{1+\sqrt{17}}{2}<p<5$ and admissible pair $(p_1, q_1)$, we have
  \begin{equation*}
  E[Z^2\phi](\tSi_{\tv_1})
  %\|Z^2\phi\|_{LE(\tcD_{\tv_0, \tv_1})}
  + \|Z^2\phi\|_{L_{\tv}^{p_1}L_x^{q_1}(\tcD_{\tv_0,\tv_1})}^2
  \lesssim (\widetilde{v}_1)_{+}^{ 2} E[Z^{\leq 2}\phi](\tSi_0).
\end{equation*}
\end{Lem}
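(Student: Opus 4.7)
The plan is to commute the equation \eqref{maineq} twice with vector fields in $\Gamma$, so that $\psi=Z^2\phi$ satisfies
\[
\Box_g(Z^2\phi) = p(p-1)\,|\phi|^{p-3}\phi\,(Z\phi)^2 + p\,|\phi|^{p-1}\,Z^2\phi,
\]
and then feed this into the linear Strichartz/energy inequality of Proposition~\ref{prop:IS:v} applied to $\psi = Z^2\phi$ on the slab $\tcD_{\tv_0,\tv_1}$. This reduces the lemma to estimating the two contributions to $\|\Box_g Z^2\phi\|_{L^1_{\tv}L^2_x(\tcD_{\tv_0,\tv_1})}$: the linear-in-$Z^2\phi$ piece, which should be absorbed on the left, and the quadratic-in-$Z\phi$ piece, which should grow at most like $\tv_1$.

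The first piece is treated exactly as in the proof of Corollary~\ref{L1L2}. By Lemma~\ref{k22} we have $|\phi|^{p-1}\in L^k_{\tv}L^l_x(\mathcal{M})$ with $\frac{1}{k}+\frac{3}{l}=2$ and $1<k<2<l<3$; the dual pair $(k',l')$ then verifies $\frac{1}{k'}+\frac{3}{l'}=\frac{1}{2}$ with $6<l'<\infty$, i.e.\ it is an admissible Strichartz pair. H\"older gives
\[
\|p|\phi|^{p-1}Z^2\phi\|_{L^1_{\tv}L^2_x} \lesssim \||\phi|^{p-1}\|_{L^k_{\tv}L^l_x}\,\|Z^2\phi\|_{L^{k'}_{\tv}L^{l'}_x},
\]
and Lemma~\ref{main}, applied to a partition of $[\tv_0,\tv_1]$ into finitely many subintervals on each of which $\||\phi|^{p-1}\|_{L^k_{\tv}L^l_x}$ is arbitrarily small (the number of pieces depending only on the global norm from Lemma~\ref{k22}), absorbs this contribution into the $\|Z^2\phi\|_{L^{p_1}_{\tv}L^{q_1}_x}$ term on the left of Proposition~\ref{prop:IS:v}.

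The main obstacle, and the reason for the $\tv_1^2$ growth, is the quadratic source $|\phi|^{p-3}\phi(Z\phi)^2$. Here the hypothesis $p>\tfrac{1+\sqrt{17}}{2}>2$ is crucial: combined with the uniform pointwise bound $|\phi|\lesssim r^{-1/2}\sqrt{\mathcal{E}_1}$ derived at the end of the proof of Proposition~\ref{H2} (trace together with Sobolev on the spheres applied to $E[Z^{\leq 1}\phi](\tSi_{\tv})$, which is uniformly bounded in $\tv$), and using $r\geq r_0>0$, we obtain $|\phi|^{p-2}\lesssim 1$ pointwise. The problem therefore reduces to estimating
\[
\|(Z\phi)^2\|_{L^1_{\tv}L^2_x(\tcD_{\tv_0,\tv_1})} = \|Z\phi\|^2_{L^2_{\tv}L^4_x}.
\]
H\"older in $x$ via $L^4\subset (L^2)^{1/2}(L^6)^{1/2}$, followed by H\"older in $\tv$ over the finite interval $[\tv_0,\tv_1]$, yields
\[
\|Z\phi\|^2_{L^2_{\tv}L^4_x[\tv_0,\tv_1]} \leq \|Z\phi\|_{L^{\infty}_{\tv}L^2_x}\,\|Z\phi\|_{L^1_{\tv}L^6_x[\tv_0,\tv_1]} \lesssim \tv_1\,\|Z\phi\|_{L^{\infty}_{\tv}L^2_x}\,\|Z\phi\|_{L^{\infty}_{\tv}L^6_x},
\]
and both factors on the right are controlled by $\sqrt{\mathcal{E}_1}$ via Proposition~\ref{H2} (the second using the endpoint admissible Strichartz pair $(p_1,q_1)=(\infty,6)$). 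Combining, $\|\Box_g Z^2\phi\|^2_{L^1_{\tv}L^2_x}\lesssim \tv_1^2\mathcal{E}_1^2$, and substitution into Proposition~\ref{prop:IS:v} together with $E[Z^2\phi](\tSi_{\tv_0})\leq \mathcal{E}_2$ yields the claimed $\tv_1^2\mathcal{E}_2$ bound. The delicate step is precisely this interpolation: one cannot hope for better than polynomial growth of $Z^2\phi$, and the whole mechanism hinges on the dispersive gain $|\phi|^{p-2}\lesssim 1$ (hence the need for $p>2$) combined with finite-interval H\"older in $\tv$ providing exactly one factor of $\tv_1$.
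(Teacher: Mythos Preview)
Your overall strategy---Proposition~\ref{prop:IS:v} plus Lemma~\ref{main} for the $|\phi|^{p-1}Z^2\phi$ piece, and a separate estimate showing at most linear-in-$\tv_1$ growth of $\||\phi|^{p-2}(Z\phi)^2\|_{L^1_{\tv}L^2_x}$---matches the paper. The handling of the linear-in-$Z^2\phi$ term is correct. The gap is in the quadratic source term.

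First, the interpolation $L^4\subset (L^2)^{1/2}(L^6)^{1/2}$ is wrong in three space dimensions: the correct exponents are $\|f\|_{L^4}\le \|f\|_{L^2}^{1/4}\|f\|_{L^6}^{3/4}$. More importantly, the quantity $\|Z\phi\|_{L^\infty_{\tv}L^2_x}$ is \emph{not} controlled by $\cE_1$. The energy $E[Z\phi](\tSi_{\tv})$ bounds $\|\partial Z\phi\|_{L^2_x}$---hence $\|Z\phi\|_{L^6_x}$ by Sobolev and $\|r^{-1}Z\phi\|_{L^2_x}$ by Hardy---and also $\int |Z\phi|^2\,dr\,d\omega$, but not $\int |Z\phi|^2\,r^2\,dr\,d\omega=\|Z\phi\|_{L^2_x}^2$. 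With only the available $r^{-1/2}$ pointwise decay, $Z\phi$ need not lie in $L^2_x$ uniformly in $\tv$, so the step $\|Z\phi\|_{L^1_{\tv}L^6_x}\lesssim \tv_1\,\|Z\phi\|_{L^\infty_{\tv}L^6_x}$ is fine but the preceding factor $\|Z\phi\|_{L^\infty_{\tv}L^2_x}$ is not. The root cause is your replacement $|\phi|^{p-2}\lesssim 1$: this discards the $r^{-(p-2)/2}$ decay carried by $|\phi|^{p-2}$, which is exactly what one needs to compensate for the lack of $L^q_x$ control on $Z\phi$ below $q=6$.

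The paper avoids this by putting both copies of $Z\phi$ in $L^6_x$ (so $(Z\phi)^2\in L^3_x$, which \emph{is} controlled by $\|Z\phi\|_{L^\infty_{\tv}L^6_x}^2\lesssim \cE_1$) and placing $|\phi|^{p-2}$ in $L^1_{\tv}L^6_x$, i.e.\ $\phi\in L^{p-2}_{\tv}L^{6(p-2)}_x$. The $\tv_1$ growth then comes from an interpolation between the Strichartz norms $\|\phi\|_{L^{p_2}_{\tv}L^{q_2}_x}$ and the spacetime bound $\|\phi\|_{L^{k_0(p-1)}_{\tv,x}}$ (Lemma~\ref{k22}), yielding $\phi\in L^{n_1}_{\tv}L^{6(p-2)}_x$ for some $n_1>p-2$; H\"older in time on $[\tv_0,\tv_1]$ then gives $\|\phi\|^{p-2}_{L^{p-2}_{\tv}L^{6(p-2)}_x}\lesssim (\tv_1)_+$. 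This also keeps the dependence on the data at the level of $\cE_1^2\lesssim \cE_2$ (via Gagliardo--Nirenberg on the initial slice), matching the stated bound.
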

\begin{proof}
For $Z\in \{\pa_{\tv}, \Om_{ij}\}$, consider the equation
\begin{equation*}
\Box_g(Z^2\phi)=p(p-1)|\phi|^{p-3}\phi Z\phi Z\phi+p|\phi|^{p-1}Z^2\phi.
\end{equation*}
Here $Z$ may vary in different places.
We see that the equation for $Z^2\phi$ is similar to that for $Z\phi$, except that there is a new lower order nonlinear term $|\phi|^{p-3}\phi Z\phi Z\phi$. The proof then goes similar to that for Proposition \ref{H2} once we have controlled this lower order nonlinear term.

First by using the Strichartz estimate of Proposition \ref{H2} for $Z\phi$, we can bound that
\begin{align*}
\||\phi|^{p-2}|Z\phi| |Z\phi|\|_{L_{\tv}^1L_x^2(\tcD_{\tv_0, \tv_1})}&\les \|Z\phi\|_{L_{\tv}^\infty L_x^6(\tcD_{\tv_0, \tv_1})}^2 \||\phi|^{p-2} \|_{L_{\tv}^{1}L_x^6 (\tcD_{\tv_0, \tv_1})}\\
&\les { E[Z^{\leq 1}\phi](\tSi_0)} \|\phi  \|^{p-2}_{L_{\tv}^{p-2}L_x^{6(p-2)} (\tcD_{\tv_0, \tv_1})}.
\end{align*}
Now on the initial hypersurface, we can apply Gargliado-Nirenberg inequality to bound that
\begin{align*}
E[Z^{\leq 1}\phi](\tSi_0)^2\les E[Z^{\leq 2}\phi](\tSi_0) E[\phi](\tSi_0)\les E[Z^{\leq 2}\phi](\tSi_0).
\end{align*}
Here we may need to use the equation \eqref{maineq} to recover the missing direction $\pa_r\notin \{\pa_{\tv}, \Om_{ij}\}$.

In view of the proof for Lemma \ref{k22} and the Strichartz estimate for $\phi$ in Proposition \ref{H2}, we recall that
\begin{align*}
\|\phi\|_{L_{\tv}^{p_2}L_x^{q_2}}+\|\phi\|_{L_{\tv}^{k_0(p-1)}L_x^{k_0(p-1)}}\les 1
\end{align*}
for all admissible pair $(p_2, q_2)$ and some $1<k_0<2$. Since $p>\frac{1+\sqrt{17}}{2}>\frac{5}{2}$, we in particular conclude that for any $q_2> \max(6,6(p-2))$ there exists $\theta\in (0, 1)$ such that
\begin{align*}
\frac{\theta}{q_2}+\frac{1-\theta}{k_0(p-1)}=\frac{1}{6(p-2)}.
\end{align*}
Now let  $n_1$ be the constant  %we need to check $n_1>p-2$
\begin{equation*}
\frac{\theta}{p_2}+ \frac{1-\theta}{k_0(p-1)}=\frac{1}{n_1}.
\end{equation*}
Then by $ \frac{1}{p_2}+\frac{3}{q_2}=\frac{1}{2}$ we have $\frac{\theta}{2}+ \frac{4(1-\theta)}{k_0(p-1)}=\frac{1}{n_1}+\frac{1}{2(p-2)}$.
If $3\leq p<5$ we have $ \frac{3}{2(p-2)}>\frac{1}{2}$, we can choose $q_2$ close to $6(p-2) $ then $ \theta$ is  close to $1$, $ \frac{\theta}{2}+ \frac{4(1-\theta)}{k_0(p-1)}$ is  close to $\frac{1}{2}$, say, $ <\frac{3}{2(p-2)}$. Then $\frac{1}{n_1}+\frac{1}{2(p-2)}<\frac{3}{2(p-2)} $, $n_1>p-2$.
If $\frac{1+\sqrt{17}}{2}< p\leq3$, we can choose $q_2$ close to $6 $ such that $ p_2>k_0(p-1)$, we can also choose $k_0$ close to $2$ then $n_1>k_0(p-1)>p-2$. Thus we always have $n_1>p-2$.

Then by interpolation and H\"older's inequality, we can bound that
\begin{align*}
\|\phi  \|^{p-2}_{L_{\tv}^{p-2}L_x^{6(p-2)} (\tcD_{\tv_0, \tv_1})}\les (1+|\tv_1|)^{1-\frac{p-2}{n_1}}\|\phi  \|^{p-2}_{L_{\tv}^{n_1}L_x^{6(p-2)} (\tcD_{\tv_0, \tv_1})}\les (\tv_1)_+.
\end{align*}
Now by Strichartz estimate of Proposition \ref{prop:IS:v}, for any admissible pair $(p_1, q_1)$ we have
\begin{align*}
  &\sup_{\widetilde{v}_0\leq\widetilde{v}\leq \widetilde{v}_1}E[Z^2\phi](\widetilde{\Sigma}_{\widetilde{v}})
  +\|Z^2\phi\|^2_{L_{\widetilde{v}}^{p_1}L_x^{q_1}(\tcD_{\tv_0, \tv_1})}\\
  &\lesssim
  E[Z^2\phi](\widetilde{\Sigma}_{\widetilde{v}_0})
  +\||\phi|^{p-1}Z^2\phi\|^2_{L_{\widetilde{v}}^{1}L_x^{2}(\tcD_{\tv_0, \tv_1})}+
  \||\phi|^{p-2} Z\phi Z\phi\|^2_{L_{\widetilde{v}}^{1}L_x^{2}(\tcD_{\tv_0, \tv_1})}\\
  &\lesssim
  (\tv_1)_+^{ 2} E[Z^2\phi](\widetilde{\Sigma}_{\widetilde{v}_0})
  +\||\phi|^{p-1}Z^2\phi\|^2_{L_{\widetilde{v}}^{1}L_x^{2}(\tcD_{\tv_0, \tv_1})}.
\end{align*}
Since $|\phi|^{p-1}\in L_{\tv}^k L_x^l$ for some $1<k<2$, $ \frac{1}{k}+\frac{3}{l}=2$ by Lemma \ref{k22}, the Proposition then follows by using
 Lemma \ref{main}.
\end{proof}
\iffalse
By this lemma, these terms may have polynomial growth, this is because the $L^{\infty}$ decay estimate for $\phi$ we have now is weak. Nevertheless, it is enough to close the estimate in the next section in view of Lemma \ref{lp}.

\begin{Remark}
  When we obtain more decay for $\phi$ in the $\widetilde{v}$ direction, like $\widetilde{v}^{-1}$, we can improve the polynomial grow estimates here to boundedness. In fact, we can do it when we get the decay estimate for $E[Z\phi](\Sigma_u)$.
\end{Remark}
\fi

\subsection{Improved energy flux decay}
To improve the pointwise decay estimate for the solution, our strategy is to improve the decay for the energy flux $E[Z^k\phi](\Si_u)$. The decay rate for $E[\phi](\Si_u)$ is $u_+^{-\ga_0}$ in view of Proposition \ref{ephidecay0} with $\ga_0{ <} \min\{2, p-1\}$. We can upgrade this decay rate to $u_+^{-\ga}$ for any $\ga<2$ by using the second order derivative of the solution.
The main task of this section is to show that
\begin{Prop}
\label{decayezphi}
 If $\frac{1+\sqrt{17}}{2}<p<5$, for any $1<\ga<2$ we have
  \begin{align*}
  E[Z^{\leq 1}\phi](\Sigma_u)\lesssim u_+^{-\ga}\cE_1^{ C}\cE_2.
\end{align*}
\end{Prop}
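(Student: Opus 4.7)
The strategy mirrors the proof of Proposition \ref{ephidecay0}, but applied to $Z\phi$ and using the higher-order Strichartz bound of Lemma \ref{Z2phi} to handle the trapping at $r=3M$. First I would derive an analogue of the $r$-weighted energy estimate of Proposition \ref{divsk1} for $Z\phi$. Since $\Box_g(Z\phi)=p|\phi|^{p-1}Z\phi$, applying the multiplier $X=r^\gamma L$, $q=(1-\mu)r^{\gamma-1}$ and $m$ from Lemma \ref{rpw} produces a nonlinear error of the schematic form $\iint r^\gamma|\phi|^{p-1}|Z\phi||L(Z\phi)|\,d\vol$. Using Cauchy-Schwarz together with the spacetime bound $|\phi|^{p-1}\in L_{\tv}^k L_x^l$ from Lemma \ref{k22} and the Strichartz bounds for $Z\phi$ from Proposition \ref{H2}, this error can be absorbed via Lemma \ref{main}, yielding an $r$-weighted estimate for $Z\phi$ with the right-hand side controlled by $\int_{\H_{u_1}}r^\gamma |L(rZ\phi)|^2 dvd\omega+\cE_1$. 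From this we extract a dyadic sequence $\{u_k\}$ with $\int_{\H_{u_k}}r|L(rZ\phi)|^2 dvd\omega \lesssim u_k^{1-\gamma}\cE_1$ by the same interpolation/pigeonhole argument used in Proposition \ref{ephidecay0}.

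The next step is to bound $\int_{u_1}^{u_2}E[Z\phi](\Sigma_u)\,du$. The contribution from the region $r\geq R$ is controlled by the $r$-weighted bulk term as before. The nontrivial issue comes from the trapping region $r\approx 3M$, i.e.
\begin{equation*}
\int_{u_1}^{u_2}\int_{|r^*|\leq 1/2}|(\pa_{\tv},\slashed\nabla)Z\phi|^2\,d\vol,
\end{equation*}
which is not directly controlled by the log-loss integrated local energy for $Z\phi$. I would apply Lemma \ref{lp} pointwise in $u$ with $q=12$ and $N$ chosen large (sufficient for our target $\gamma$) to split this into a log-weighted piece, bounded via the log-loss ILE of Proposition \ref{H2} applied to $Z\phi$ by $(\ln (u_2)_+)^2 E[Z\phi](\Sigma_{u_1})$, and an $L^{12}_x$ piece multiplied by $u_+^{-5N/6}$, which by Lemma \ref{Z2phi} with admissible pair $(4,12)$ gives at most $(u_1)_+^{-N/2+2}\cE_2$. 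The growth $\tv^2$ from Lemma \ref{Z2phi} is thus harmless once $N$ is large.

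Combining the above yields
\begin{equation*}
\int_{u_1}^{u_2}E[Z\phi](\Sigma_u)\,du \lesssim (\ln (u_2)_+)^2 E[Z\phi](\Sigma_{u_1})+\int_{\H_{u_1}}r|L(rZ\phi)|^2 dvd\omega+(u_1)_+^{-N/2+2}\cE_2.
\end{equation*}
Applying this with $u_2=u$, $u_1=u/2$ and using the uniform bound $E[Z\phi](\Sigma_u)\lesssim \cE_1$ from Proposition \ref{H2} first produces a weak decay $E[Z\phi](\Sigma_u)\lesssim u_+^{-1}(\ln u_+)^2\cE_1+u_+^{-N/2}\cE_2$. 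Feeding this back into the above estimate on consecutive members of the dyadic sequence $\{u_k\}$, together with the $r$-weighted decay $\int_{\H_{u_k}}r|L(rZ\phi)|^2 dvd\omega \lesssim u_k^{1-\gamma}\cE_1$, upgrades the decay rate to $E[Z\phi](\Sigma_{u_{k+1}})\lesssim u_k^{-\gamma}\cE_1^{C}\cE_2$ by the same pigeonhole argument as for Proposition \ref{ephidecay0}; passing from $u_k$ to arbitrary $u$ is routine since the sequence is dyadic. The main obstacle is Step 1: carefully closing the $r$-weighted estimate for $Z\phi$ in the presence of the linear-in-$Z\phi$ potential term, and balancing the polynomial growth in $\tv$ from Lemma \ref{Z2phi} against the $u_+^{-N}$ gain from the log-loss inequality near the photon sphere.
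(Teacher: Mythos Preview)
Your handling of the pigeonhole/dyadic part and the trapping region via Lemma \ref{lp} and Lemma \ref{Z2phi} matches the paper's argument and is fine. The problem is your Step~1, the $r$-weighted estimate for $Z\phi$.

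After applying the multiplier $X=r^\gamma L$ to $\Box_g(Z\phi)=p|\phi|^{p-1}Z\phi$, the nonlinear error (after Cauchy--Schwarz against the good bulk $r^{\gamma-3}|L(rZ\phi)|^2\,d\vol$) leaves you with
\[
\iint_{\{r\geq R\}} r^{\gamma+1}\,|\phi|^{2(p-1)}\,|Z\phi|^2\,d\vol,
\]
which carries a growing $r$-weight. The unweighted spacetime bound $|\phi|^{p-1}\in L^k_{\tv}L^l_x$ from Lemma \ref{k22} together with the unweighted Strichartz norms of Proposition \ref{H2} cannot absorb this weight, and Lemma \ref{main} is designed for the $L^1_{\tv}L^2_x$ inhomogeneity framework, not for the $r^\gamma$-weighted hierarchy. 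So your proposed closure does not go through as stated.

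The paper closes this step differently: it first proves Corollary \ref{r3}, a pointwise-in-$(r,u)$ bound
\[
\Big(\int_{|\omega|=1}|\phi|^{2(p-1)s}\,d\omega\Big)^{1/s}\lesssim r^{-3-\epsilon_1}u_+^{-1-\epsilon_1}\cE_1^{p-1},
\]
obtained by combining the energy-flux decay $E[\phi,p](\Sigma_u)\lesssim u_+^{-\gamma_0}\cE_1$ from Proposition \ref{ephidecay0} with the integrated trace inequality of Lemma \ref{betterrl} along $\mathcal{H}_u$. This decay is exactly what is needed to beat the $r^{\gamma+1}$ weight: after Sobolev on the sphere one gets $\int_{\omega}|\phi|^{2(p-1)}|Z\phi|^2\,d\omega\lesssim r^{-3-\epsilon_1}u_+^{-1-\epsilon_1}\cE_1^{p-1}\int_{\omega}(|Z\phi|^2+|r\slashed\nabla Z\phi|^2)\,d\omega$, and then interpolation with the integrated local energy and a Gronwall argument yield Proposition \ref{rwzphi} with right-hand side $\int_{\H_{u_1}}r^\gamma|L(rZ\phi)|^2\,dvd\omega+\cE_1^{C}E[Z\phi](\Sigma_{u_1})$. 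Once you have that, the rest of your outline is essentially the paper's proof.
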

Note that $\phi$ verifies the same equation for $Z\phi$
$$\Box_g(Z\phi)=p|\phi|^{p-1}Z\phi$$
 up to a constant. It suffices to prove the above decay estimate for $Z\phi$.
In view of the above equation for $Z\phi$, we need to control the nonlinearity $|\phi|^{p-1}$. Let's first recall the necessary trace type Sobolev inequalities. On $\Si_u$,   the trace theorem (see for example  Proposition 3.2.3 of \cite{Klainerman93:Mink}) implies that
\begin{align}
\label{eq:trace}
r\|\psi(r, u, \om)\|_{L_{\om}^4}^{2}\les E[\psi](\Si_u).
\end{align}
Sobolev embedding on the unit sphere then gives the pointwise decay estimate for the solution $\phi$ if we also have the energy flux decay for  $\Om_{ij}\phi$. Such pointwise decay estimates are sufficiently strong when $r\les u_+$. To transfer the  decay in $u$ to decay in $r$ on $\H_u$, we make use of the $r$-weighted energy estimate.
 \begin{Lem}
 \label{betterrl}
On the out going null hypersurface $\mathcal{H}_u$, it holds that
\begin{equation*}
\|r\psi(r_2, u, \om)\|_{L_\om^{q_1}}^{q_1} \lesssim  \|r\psi(r_1, u, \om)\|_{L_\om^{q_1}}^{q_1}+(r_1^{\frac{q_1}{2}-\ga}+r_2^{\frac{q_1}{2}-\ga})E[\psi](\Si_u)^{\frac{q_1-2}{2}}\cdot \int_{  \H_u}r^{\ga} |L(r\psi)|^2dvd\om
\end{equation*}
for all $r_2\geq r_1 \geq R$,\quad $2\leq q_1\leq 4$, $ {q_1}\neq2\ga$ and $\ga\in \mathbb{R}$.
\end{Lem}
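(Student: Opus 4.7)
The plan is to integrate the derivative of $|r\psi|^{q_1}$ along $\mathcal{H}_u$ in the $L$-direction, then estimate the resulting bulk integral by combining the trace Sobolev $\|r\psi\|_{L^4_\omega}^2 \lesssim r\,E[\psi](\Sigma_u)$ with weighted Cauchy–Schwarz. Set $\Phi := r\psi$. Since on $\mathcal{H}_u$ we may parameterize by $v$ with $Lr = 1-\mu$ bounded above and below by positive constants for $r\geq R>5M$, the fundamental theorem of calculus gives, pointwise in $\omega$,
\[
|\Phi(r_2,u,\omega)|^{q_1} - |\Phi(r_1,u,\omega)|^{q_1} \;=\; q_1\!\int_{v_1}^{v_2}\!|\Phi|^{q_1-2}\Phi\,L\Phi \,dv.
\]
Integrating over $S^2$ and bounding by absolute value reduces the task to controlling $\displaystyle \int_{v_1}^{v_2}\!\!\int_{S^2}|\Phi|^{q_1-1}|L\Phi|\,d\omega dv$.

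My next step would be to apply Hölder on the sphere with the splitting $|\Phi|^{q_1-1}|L\Phi| = |\Phi|^{q_1-2}\cdot|\Phi|\cdot|L\Phi|$ and exponents $\bigl(\tfrac{4}{q_1-2},\tfrac{4}{4-q_1},2\bigr)$, whose reciprocals sum to one for $2<q_1<4$:
\[
\int_{S^2}|\Phi|^{q_1-1}|L\Phi|\,d\omega \;\leq\; \|\Phi\|_{L^4_\omega}^{q_1-2}\,\|\Phi\|_{L^{4/(4-q_1)}_\omega}\,\|L\Phi\|_{L^2_\omega}.
\]
The trace \eqref{eq:trace} promptly converts the first factor into $C\,r^{(q_1-2)/2} E[\psi](\Sigma_u)^{(q_1-2)/2}$. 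This is precisely where the exponent $E^{(q_1-2)/2}$ on the right-hand side of the lemma originates.

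Next I would apply Cauchy–Schwarz in $v$ against the weights $r^{\pm\gamma/2}$ to peel off $\int r^\gamma|L\Phi|^2\,dvd\omega$, leaving the auxiliary integral $\int r^{q_1-2-\gamma}\|\Phi\|_{L^{4/(4-q_1)}_\omega}^2\,dv$. For $q_1\in[2,3]$, monotonicity of $L^p(S^2)$-norms reduces this to $\int r^{q_1-1-\gamma} E\,dv$, which equals a constant multiple of $E(r_1^{q_1-\gamma}+r_2^{q_1-\gamma})$ since the hypothesis $q_1\neq 2\gamma$ forces the final effective exponent $q_1/2-\gamma$ away from zero (ruling out a logarithmic divergence when the square root is taken). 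For $q_1\in(3,4]$ the monotonicity step fails and one must invoke Sobolev on $S^2$ (or, equivalently, interpolate with an $r$-weighted Hardy bound along $\mathcal{H}_u$) to control the $L^{4/(4-q_1)}_\omega$ norm by the available energies without losing $r$-weight. Putting everything together and matching the precise form of the right-hand side — in particular absorbing a leftover $E^{1/2}r^{\gamma/2}B^{1/2}$ via Young's inequality — yields the stated bound.

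The main obstacle is the handling of the intermediate factor $\|\Phi\|_{L^{4/(4-q_1)}_\omega}$ for $q_1$ near $4$, where the integrability exponent of $L^p_\omega$ blows up; bookkeeping the $r$-weights carefully so that the final exponent is exactly $r^{q_1/2-\gamma}$ (rather than the cruder $r^{(q_1-\gamma)/2}$ produced by an unrefined Cauchy–Schwarz) is the delicate point, and this is also the reason the hypothesis $q_1\neq 2\gamma$ is needed: it ensures the weight $r^{q_1/2-\gamma-1}$ appearing after integration stays non-critical.
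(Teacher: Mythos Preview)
Your approach has a genuine gap that you yourself flag but do not resolve: extracting the factor $E^{(q_1-2)/2}$ via the pointwise trace $\|r\psi\|_{L^4_\omega}^2 \lesssim r\,E[\psi](\Sigma_u)$ costs an extra $r^{(q_1-2)/2}$, and the remaining computation then produces $r^{(q_1-\gamma)/2}$ rather than the required $r^{q_1/2-\gamma}$. The proposed Young-inequality patch cannot repair this (the leftover term is comparable to the quantity you are estimating, not smaller), and for $q_1>3$ the Sobolev step $\|\Phi\|_{L^{4/(4-q_1)}_\omega}\lesssim \|\Phi\|_{H^1_\omega}$ introduces an $r^2$ weight that the energy flux on $\mathcal{H}_u$ cannot absorb. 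Relatedly, your diagnosis of the hypothesis $q_1\neq 2\gamma$ is off: in your scheme the borderline case is $q_1=\gamma$, not $q_1=2\gamma$, which already signals that the weights are not falling in the right place.

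The paper avoids the trace theorem entirely. After Cauchy--Schwarz in $(v,\omega)$ one faces $\int r^{2q_1-2-\gamma}|\psi|^{2q_1-2}\,dvd\omega$; the key move is Gagliardo--Nirenberg on the sphere,
\[
\int_{|\omega|=1}|\psi|^{2q_1-2}\,d\omega \;\lesssim\; \|(\psi,\partial_\omega\psi)\|_{L^2_\omega}^{\,q_1-2}\int_{|\omega|=1}|\psi|^{q_1}\,d\omega,
\]
which interpolates against the \emph{very quantity being estimated}. One then takes $\sup_{r\in[r_1,r_2]}\int|r\psi|^{q_1}d\omega$ and solves the resulting inequality $X\lesssim A + B^{1/2}X^{1/2}C^{1/2}$ to get $X\lesssim A+BC$ (a bootstrap). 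The factor $E^{(q_1-2)/2}$ now emerges from H\"older \emph{in $v$} with exponents $\bigl(\tfrac{2}{q_1-2},\tfrac{2}{4-q_1}\bigr)$ applied to $\int r^{q_1-2-\gamma}\|(\psi,\partial_\omega\psi)\|_{L^2_\omega}^{q_1-2}\,dv$, using the integrated control $\int_v\|(\psi,\partial_\omega\psi)\|_{L^2_\omega}^2\,dv\lesssim E[\psi](\mathcal{H}_u)$ rather than any pointwise trace. This is exactly where $q_1\neq 2\gamma$ is needed: $\|r^{q_1-2-\gamma}\|_{L^{2/(4-q_1)}_v([r_1,r_2])}\sim r_1^{q_1/2-\gamma}+r_2^{q_1/2-\gamma}$ unless the integrand exponent $\tfrac{2(q_1-2-\gamma)}{4-q_1}$ equals $-1$, i.e.\ $q_1=2\gamma$.
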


\begin{proof}
First we have
\begin{align*}
&\int_{|\om|=1}|r\psi|^{q_1} d\om|_{r=r_2} =\int_{|\om|=1}|r\psi|^{q_1} d\om|_{r=r_1}+\int_{r_1}^{r_2}\int_{|\om|=1}q_1 L(r\psi) (r\psi)^{q_1-1}dvd\om\\
&\les \int_{|\om|=1}|r\psi|^{q_1} d\om|_{r=r_1}+\left(\int_{r_1}^{r_2}\int_{|\om|=1} r^{\ga}|L(r\psi)|^2 dvd\om \right)^{\f12}\cdot \left(\int_{r_1}^{r_2}\int_{|\om|=1}r^{2q_1-2-\ga} |\psi|^{2q_1-2}dvd\om \right)^{\f12}.
\end{align*}
By using the Gagliardo-Nirenberg inequality on the unit sphere, we can estimate that
$$
\int_{|\om|=1}|\psi|^{2q_1-2}d\omega \les \|(\psi, \pa_{\om}\psi)\|_{L_\om^2}^{2q_1-2-q_1} \int_{|\om|=1}|\psi|^{q_1} d\omega.
$$
Fix $r_1$ and take supreme in term of $r_2$, we in particular obtain that
\begin{align*}
&\sup\limits_{r_1\leq r\leq r_2}\int_{|\om|=1}|r\psi|^{ q_1} d\om
\les \int_{|\om|=1}|r\psi|^{ q_1} d\om|_{r=r_1}+\\&\left(\int_{r_1}^{r_2}\int_{|\om|=1} r^{\ga}|L(r\psi)|^2 dvd\om \right)^{\f12}\cdot \left(\sup\limits_{r_1\leq r\leq r_2}\int_{|\om|=1}|r\psi|^{ q_1} d\om\right)^{\f12} \cdot \left(\int_{r_1}^{r_2} r^{{ q_1}-2-\ga} \|(\psi, \pa_{\om}\psi)\|_{L_\om^2}^{{ q_1}-2} dv \right)^{\f12},
\end{align*}
from which we conclude that
\begin{align*}
 &\int_{|\om|=1}|r\psi|^{q_1} d\om |_{r=r_2} \\
&\les \int_{|\om|=1}|r\psi|^{q_1} d\om|_{r=r_1}+ \int_{ \H_u} r^{\ga}|L(r\psi)|^2 dvd\om   \cdot   \int_{r_1}^{r_2} r^{q_1-2-\ga} \|(\psi, \pa_{\om}\psi)\|_{L_\om^2}^{q_1-2} dv\\
&\les  \int_{|\om|=1}|r\psi|^{q_1} d\om|_{r=r_1}+ \int_{ \H_u} r^{\ga}|L(r\psi)|^2 dvd\om   \cdot    \| \|(\psi, \pa_{\om}\psi)\|_{L_\om^2}^{q_1-2}  \|_{L_v^{\frac{2}{q-2}}} \|r^{q_1-2-\ga}\|_{L_v^{\frac{2}{4-q_1}}([r_1, r_2])}.
\end{align*}
The Lemma then follows as
\begin{align*}
 \| \|(\psi, \pa_{\om}\psi)\|_{L_\om^2}^{q_1-2}  \|_{L_v^{\frac{2}{q_1-2}}}&\les \int_{\H_u}|\psi|^2+r^2|\nabb\psi|^2dvd\om \les E[\psi](\Si_u),\\
 \|r^{q_1-2-\ga}\|_{L_v^{\frac{2}{4-q_1}}([r_1, r_2])}&\les r_1^{\frac{q_1}{2}-\ga}+r_2^{\frac{q_1}{2}-\ga}.
\end{align*}

\end{proof}

As a corollary, we derive the following key bound for the nonlinearity.
\begin{Cor}
\label{r3}
For $\frac{1+\sqrt{17}}{2}<p<5$, there exist $s>1$ and $\epsilon_1>0$ such that on the  hypersurface $\Si_u$, the solution $\phi$ satisfies the following decay estimate
  \begin{equation*}
\Big(\int_{|\om|=1}|\phi(r, u, \om )|^{2(p-1)s}d\omega\Big)^{\frac{1}{s}}
\lesssim r^{-3-\epsilon_1}u_{+}^{-1-\epsilon_1} \cE_1^{p-1}.
\end{equation*}
Here with out loss of generality, we may assume $\cE_1\geq 1$.
\end{Cor}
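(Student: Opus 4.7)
The approach is to combine three standard ingredients: the $r$-weighted transport inequality of Lemma~\ref{betterrl}, the trace inequality \eqref{eq:trace} paired with the energy-flux decay $E[\phi](\Sigma_u)\lesssim u_+^{-\gamma_0}$ of Proposition~\ref{ephidecay0}, and the pointwise bound $\|\phi(r,u,\cdot)\|_{L_\omega^\infty}^2\lesssim r^{-1}\cE_1$ obtained from Sobolev embedding on $S^2$ with angular commutators, namely by applying \eqref{eq:trace} to $Z^{\le 1}\phi$. Together these produce an $L_\omega^{q'}$-bound on $\phi$ with $r$-decay on the outgoing cone $\mathcal{H}_u$, which is then interpolated against the $L_\omega^\infty$ estimate to reach the target norm $L_\omega^{q_1}$ with $q_1=2(p-1)s$.

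First I would fix $\gamma_0\in(1,\min(2,p-1))$ and $q'\le 4$ and bound the seed at $r=R$ by the trace theorem, H\"older on the finite-measure sphere, and Proposition~\ref{ephidecay0} as $\|\phi(R,u,\cdot)\|_{L_\omega^{q'}}^{q'}\lesssim R^{-q'/2}u_+^{-\gamma_0 q'/2}$. Next, I would choose $\gamma\in(q'/2,\min(2,p-1))$ so that the $r$-weighted flux $\int_{\mathcal{H}_u}r^\gamma|L(r\phi)|^2\,dvd\omega\lesssim\cE_0$ by Proposition~\ref{divsk1} (using compact support of the data to annihilate the seed flux on $\mathcal{H}_{-R^*/2}$). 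Applying Lemma~\ref{betterrl} with $r_1=R$ to propagate in $r$ yields
\[
\|\phi(r,u,\cdot)\|_{L_\omega^{q'}}^{q'}\lesssim R^{q'/2}r^{-q'}u_+^{-\gamma_0 q'/2}+r^{-q'-\epsilon}u_+^{-\gamma_0(q'-2)/2},\qquad\epsilon:=\gamma-\tfrac{q'}{2}>0.
\]
For $q_1=2(p-1)s>q'$, log-convexity of $L^p$-norms on the sphere gives $\|\phi\|_{L_\omega^{q_1}}^{q_1}\le\|\phi\|_{L_\omega^{q'}}^{q'}\|\phi\|_{L_\omega^\infty}^{q_1-q'}$, which combined with the pointwise $L_\omega^\infty$ bound produces
\[
\|\phi\|_{L_\omega^{q_1}}^{q_1}\lesssim r^{-(q_1+q')/2}\bigl(R^{q'/2}u_+^{-\gamma_0 q'/2}+r^{-\epsilon}u_+^{-\gamma_0(q'-2)/2}\bigr)\cE_1^{(q_1-q')/2}.
\]

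The main obstacle is the simultaneous choice of parameters $(q',\gamma,\gamma_0,s,\epsilon_1)$ with $s>1$ and $\epsilon_1>0$ satisfying the exponent inequalities
\[
\tfrac{q_1+q'}{2}\ge(3+\epsilon_1)s,\qquad \gamma_0\tfrac{q'-2}{2}\ge(1+\epsilon_1)s,\qquad \tfrac{q_1-q'}{2}\le(p-1)s,
\]
uniformly for $p\in(\tfrac{1+\sqrt{17}}{2},5)$. I would push $q'$ toward $2\min(p-1,2)$ and $\gamma_0$ toward $\min(p-1,2)$; the hypothesis $p>\tfrac{1+\sqrt{17}}{2}$ is precisely what guarantees that the resulting polynomial system in $s$ admits a solution with $s>1$ and $\epsilon_1>0$. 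If the inequality coming from the second (large-$r$) term of Lemma~\ref{betterrl} proves too tight for $p$ close to $\tfrac{1+\sqrt{17}}{2}$, one may invoke the Dafermos--Rodnianski hierarchy applied to the $r$-weighted flux (yielding $\int_{\mathcal{H}_u}r^\gamma|L(r\phi)|^2\,dvd\omega\lesssim u_+^{\gamma-\gamma_0}\cE_0$ via dyadic pigeonhole and near-monotonicity) to gain the extra factor $u_+^{\gamma-\gamma_0}$ needed to close the estimate. Raising the final bound to the power $1/s$ then produces the stated Corollary.
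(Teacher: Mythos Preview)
Your ingredients are right and your approach is close to the paper's, but there is a genuine gap in the parameter analysis for $p$ near the lower threshold. Your second displayed inequality demands $\gamma_0(q'-2)/(2s)\ge 1+\epsilon_1$, and with the constraints $q'<2\gamma<2(p-1)$ (forced by $\gamma\in(q'/2,p-1)$ for $p<3$) and $\gamma_0<p-1$, this condition becomes $(p-1)(p-2)>1$, i.e.\ $p>(3+\sqrt{5})/2\approx 2.618$, not $p>\tfrac{1+\sqrt{17}}{2}\approx 2.56$. So for $p\in\bigl(\tfrac{1+\sqrt{17}}{2},\tfrac{3+\sqrt{5}}{2}\bigr)$ your primary argument does not close: the second term from Lemma~\ref{betterrl} carries too little $u$-decay on its own, and the assertion that ``the hypothesis $p>\tfrac{1+\sqrt{17}}{2}$ is precisely what guarantees'' the system is solvable is not correct.

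Your backup via the Dafermos--Rodnianski hierarchy on the $r^\gamma$-flux can indeed be made to work, but it requires establishing $\int_{\mathcal{H}_u}r^\gamma|L(r\phi)|^2\,dvd\omega\lesssim u_+^{\gamma-\gamma_0}$ for \emph{all} $u$ (not just a dyadic sequence), which needs the near-monotonicity of Proposition~\ref{divsk1} plus interpolation, and then re-running your exponent system with the additional $u_+^{\gamma-\gamma_0}$ factor. The paper avoids this detour by a much simpler device: it splits into the regions $r\le u_+$ and $r\ge u_+$. In the former it uses only the trace bound $\|\phi\|_{L^4_\omega}^{2(p-1)}\lesssim r^{1-p}u_+^{-\gamma_0(p-1)}\cE_1^{p-1}$, whose total $(r,u)$-decay $(p-1)(1+\gamma_0)$ exceeds $4$ exactly when $p>\tfrac{1+\sqrt{17}}{2}$; in the latter it uses Lemma~\ref{betterrl} and trades the surplus $r$-decay for $u$-decay via $r\ge u_+$. (For $p\ge 3$ the paper replaces your $L^\infty$ interpolation by Gagliardo--Nirenberg with the $L^2_\omega$-norm of $(\phi,\partial_\omega\phi)$, again taking the minimum of the two $L^4_\omega$ bounds.) This dichotomy is the missing idea; once you insert it, no auxiliary pigeonhole is needed.
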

\begin{proof}
In view of the proof for Proposition \ref{ephidecay0}, for all $N>\ga_0$ and $1<\ga_0<\min\{2, p-1\}$ we recall that
\begin{align*}
&\int_{\H_u}r^{\ga_0}|L(r\phi)|^2dvd\om \les E[\phi](\tSi_{0})\les 1,\\
&E[\phi, p](\Sigma_u)\lesssim
  u_+^{-\gamma_0}  +u_+^{-N} \cE_1\les u_+^{-\ga_0}\cE_1. %E[Z^{\leq 1}\phi](\tSi_0).
\end{align*}
 In particular for all $1\leq q_1\leq 4$ the trace theorem shows that
\begin{align*}
\|\phi(r, u, \om)\|_{L_{\om}^{q_1}}^2\les \|\phi(r, u, \om)\|_{L_{\om}^4}^{2}&\les r^{-1} E[\phi](\Si_u)\les r^{-1} u_+^{-\gamma_0}   \cE_1.
%E[Z^{\leq 1}\phi](\tSi_0).
\end{align*}
Then the previous lemma \ref{betterrl} implies that
 \begin{align*}
\|r\phi(r_2, u, \om)\|_{L_\om^{q_1}}^{q_1} &\lesssim  \|r\phi(r_1, u, \om)\|_{L_\om^{q_1}}^{q_1}+(r_1^{\frac{q_1}{2}-\ga_0}+r_2^{\frac{q_1}{2}-\ga_0})E[\phi](\Si_u)^{\frac{q_1-2}{2}} \\
&\les r_1^{\frac{q_1}{2}}  \big(E[ \phi](\Si_u)\big)^{\frac{q_1}{2}}  +(r_1^{\frac{q_1}{2}-\ga_0}+r_2^{\frac{q_1}{2}-\ga_0})\big(E[ \phi](\Si_u)\big)^{\frac{q_1-2}{2}}  .
\end{align*}
Setting $r_1=R$, we in particular derive that
\begin{align*}
\|\phi(r, u, \om)\|_{L_\om^{q_1}}
&\les r^{-1}   \big(E[ \phi](\Si_{u})\big)^{\frac{1}{2}} +\big(  r^{-\f12-\frac{\ga_0}{q_1}}+r^{-1} \big)  \big(E[ \phi](\Si_{u})\big)^{\frac{q_1-2}{2q_1}}\\
&\les  \big(  r^{-\f12-\frac{\ga_0}{q_1}}+r^{-1} \big)  \big( u_+^{-\ga_0}\cE_1 \big)^{\frac{q_1-2}{2q_1}}
\end{align*}
for all $2\leq  q_1\leq 4$, $ {q_1}\neq2\ga_0$. Here we recall that $E[\phi](\Si_u)\les E[\phi, p](\tSi_0)\les 1$.
Now for the case when  $\frac{1+\sqrt{17}}{2}<p<3$, we can choose $s>1$ such that  $q_1=2(p-1)s\in (2, 4)$ and $1<\ga_0<p-1$,  then $q_1>2(p-1)>2\ga_0$. Using H\"older's inequality on the unit sphere, on one hand have
\begin{align*}
\Big(\int_{|\om|=1}|\phi(r, u, \om)|^{2(p-1)s}d\omega\Big)^{\frac{1}{s}}
\lesssim \|\phi\|_{L_{\om}^{q_1}}^{2(p-1)}\les  r^{1-p-\frac{\ga_0}{s}}u_+^{-(p-1)\ga_0+\frac{\ga_0}{s}}\cE_1^{p-1-\frac{1}{s}}.
\end{align*}
This bound is sufficiently strong when $r\geq u_+$. On the other hand when $r\leq u_+$, directly using the above $L^4$ bound, we also have
\begin{align*}
\Big(\int_{|\om|=1}|\phi(r, u, \om)|^{2(p-1)s}d\omega\Big)^{\frac{1}{s}}
\lesssim \|\phi\|_{L_{\om}^{4}}^{2(p-1)}\les  r^{1-p}u_+^{-(p-1)\ga_0 }\cE_1^{p-1}.
\end{align*}
Note that the total decay rate in $u$ and $r$ verifies
\begin{align*}
p-1+\ga_0(p-1)>4
\end{align*}
for $\ga_0$ sufficiently close to $p-1$ (the restriction that $\ga_0<p-1$) and $p>\frac{1+\sqrt{17}}{2}$.
Moreover, note that $2(p-1)>3$, $\ga_0(p-1)>1$. The lemma then follows for the case $\frac{1+\sqrt{17}}{2}<p<3$.

For the case when $3\leq p<5$, let $s>1$ such that $2(p-1)s>4$. Using the Gagliardo-Nirenberg inequality, similarly we can show that
\begin{align*}
\int_{|\om|=1}|\phi|^{2(p-1)s}d\omega &\les \|(\phi, \pa_{\om}\phi)\|_{L_\om^2}^{2(p-1)s-4} \cdot \int_{|\om|=1}|\phi|^{4} d\omega\\
&\les (r^{ -1}E[Z^{\leq 1}\phi](\Si_u))^{ (p-1)s-2}  \min\{r^{-2}u_+^{-2\ga_0}\cE_1^2, \cE_1 r^{-2-\ga_0}u_+^{-\ga_0}\}\\
&\les r^{-(p-1)s}u_+^{-2\ga_0}\min\{1, u_+^{\ga_0} r^{-\ga_0}\} \cE_1^{(p-1)s}.
\end{align*}
Choose $\ga_0$ and $s$ such that
$$\frac{2}{p-1}\leq 1<\ga_0< 2,\quad  1<s <\ga_0<2.$$
Then the total decay rate divided by $s$ satisfies
\begin{align*}
p-1+2\frac{\ga_0}{s}>p+1\geq 4.
\end{align*}
Moreover since $2\frac{\ga_0}{s}>1$ and $p-1+\frac{\ga_0}{s}>3$, we then conclude that there exists $\ep_1>0$ such that
 \begin{align*}
\left(\int_{|\om|=1}|\phi|^{2(p-1)s}d\omega \right)^{\frac{1}{s}}
&\les r^{-(p-1)}u_+^{-2\frac{\ga_0}{s}}\min\{1, u_+^{\frac{\ga_0}{s}} r^{-\frac{\ga_0}{s}}\} \cE_1^{p-1}\\
&\les r^{-3-\ep_1} u_+^{-1-\ep_1} \cE_1^{p-1}.
\end{align*}
\end{proof}
To prove Proposition \ref{decayezphi}, similar to the proof for the energy flux decay estimate for $\phi$, we need to establish a $r$-weighted energy estimate for $Z\phi$.
 \begin{Prop}
 \label{rwzphi}
If $\frac{1+\sqrt{17}}{2}<p<5$, for any $1<\gamma<2$ and $k=0, 1$, we have the following $r$-weighted estimate for $Z^k\phi$
  \begin{equation*}
  \begin{split}
  &\iint_{\mathcal{D}_{u_1, u_2}\cap\{r\geq R\}} r^{\gamma-3}\Big(
  |L(rZ^k\phi)|^2
  +|Z^k\phi|^2
  +|r\slashed\nabla Z^k\phi|^2\Big) d\vol
  +\int_{\mathcal{H}_{u_2}}r^{\gamma}|L(rZ^k\phi)|^2dvd\omega
  \\&
  \lesssim\int_{\mathcal{H}_{u_1}}
  r^{\gamma}|L(rZ^k\phi)|^2dvd\omega+ \cE_1^{ C} E[Z^k\phi](\Sigma_{u_1}).
  \end{split}
\end{equation*}
\end{Prop}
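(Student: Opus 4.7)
The plan is to apply the $r$-weighted energy identity of Lemma \ref{rpw} to $Z^k\phi$ using the multiplier $X=f(u)r^{\gamma}L$, function $q=f(u)(1-\mu)r^{\gamma-1}$, and one-form $m$ from Lemma \ref{rpw}, all multiplied by a cutoff $\eta(r)$ vanishing for $r\leq 4M$ and equal to $1$ for $r\geq 5M$, with $f\equiv 1$. For $R$ large enough (depending on $\gamma$) and $r\geq R$, the $O(\mu)$ terms in Lemma \ref{rpw} are negligible, so $Q[Z^k\phi,X,q,m]$ dominates $r^{\gamma-3}(|L(rZ^k\phi)|^2+|Z^k\phi|^2+|r\slashed\nabla Z^k\phi|^2)$. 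The boundary contributions on $\Sigma_{u_i}$ and $\mathcal{H}_{u_i}$ follow from the formulas for $r^2P_u$ and $r^2P_v$ in Lemma \ref{rpw}: the terms $\frac12\underline{L}(\cdots)$ and $-\frac12 L(\cdots)$ combine telescopically into total derivatives that cancel over the closed boundary, leaving precisely the flux $\int_{\mathcal{H}_{u_2}}r^{\gamma}|L(rZ^k\phi)|^2dvd\omega$ and $\int_{\mathcal{H}_{u_1}}r^{\gamma}|L(rZ^k\phi)|^2dvd\omega$, while the contribution from the transition region $r\in[4M,R]$ is absorbed into $\cE_1^C E[Z^k\phi](\Sigma_{u_1})$ via the integrated local energy estimates (Proposition \ref{mainenergy} for $k=0$, Proposition \ref{H2} for $k=1$).

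For $k=0$, the equation $\Box_g\phi=|\phi|^{p-1}\phi$ adds a potential-type contribution $(q-\div(X)/(p+1))|\phi|^{p+1}\sim r^{\gamma-1}|\phi|^{p+1}$ with good sign whenever $\gamma<p-1$, recovering Proposition \ref{divsk1}. For $k=1$, using the identity $X\psi+q\psi=f r^{\gamma-1}L(r\psi)$ (which follows from $rL\psi=L(r\psi)-(1-\mu)\psi$), the equation $\Box_g(Z\phi)=p|\phi|^{p-1}Z\phi$ produces the additional error
\[
\mathcal{E}=p\iint \eta\, r^{\gamma-1}|\phi|^{p-1}Z\phi\cdot L(rZ\phi)\,d\vol.
\]
Cauchy--Schwarz with a small parameter $\varepsilon>0$, splitting the factor $r^{\gamma-1}=r^{(\gamma-3)/2}\cdot r^{(\gamma+1)/2}$, gives
\[
|\mathcal{E}|\leq \varepsilon\iint r^{\gamma-3}|L(rZ\phi)|^2 d\vol+C_\varepsilon \iint r^{\gamma+1}|\phi|^{2(p-1)}|Z\phi|^2 d\vol,
\]
and the first summand is absorbed into $Q[Z\phi,X,q,m]$ by choosing $\varepsilon$ small.

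The essential step is the bound $\iint r^{\gamma+1}|\phi|^{2(p-1)}|Z\phi|^2 d\vol\lesssim \cE_1^{C}E[Z\phi](\Sigma_{u_1})$. My plan is to use H\"older on the sphere with exponents $(s,s')$, where $s>1$ and $\epsilon_1>0$ are as in Corollary \ref{r3}, combined with Gagliardo--Nirenberg on the 2-sphere,
\[
\|Z\phi\|_{L_\omega^{2s'}}^2\lesssim \|Z\phi\|_{L_\omega^{2}}^{2/s'}\bigl(\|Z\phi\|_{L_\omega^{2}}^{2}+\|\Omega Z\phi\|_{L_\omega^{2}}^{2}\bigr)^{1-1/s'}.
\]
Corollary \ref{r3} yields $\|\phi\|_{L_\omega^{2(p-1)s}}^{2(p-1)}\lesssim r^{-3-\epsilon_1}u_+^{-1-\epsilon_1}\cE_1^{p-1}$, and together with the volume factor $d\vol\sim r^2 du dv d\omega$, the integrand inherits the weight $r^{\gamma-\epsilon_1}u_+^{-1-\epsilon_1}$ multiplying $(|Z\phi|^2+|\Omega Z\phi|^2)^{1-1/s'}|Z\phi|^{2/s'}$. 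The integrability of $u_+^{-1-\epsilon_1}$ in $u$ then allows application of Young's inequality and an $L^1_u$ argument to reduce matters to controlling a weighted spacetime $L^2$ norm of $Z\phi$ by $\cE_1^C E[Z\phi](\Sigma_{u_1})$. The main obstacle is the careful balancing of the residual weight $r^{\gamma-\epsilon_1}$ with $\gamma$ up to $2$: when $\gamma-\epsilon_1>\gamma-1$, direct absorption into the positive bulk $r^{\gamma-1}(|Z\phi|^2+|\Omega Z\phi|^2)$ fails, and one must split the region $\{r\geq R\}$ into $\{R\leq r\leq u_+\}$ (where the pointwise $|\phi|\lesssim r^{-1/2}\sqrt{\cE_1}$ combined with Proposition \ref{H2} gives sufficient decay in $u$) and $\{r\geq u_+\}$ (where Corollary \ref{r3} contributes the extra $r^{-\epsilon_1}$ via the $\min\{1,(u_+/r)^{\gamma_0/s}\}$ in its proof to beat the $r^{\gamma-1}$ gain). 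Together with the uniform bound $\sup_u E[Z\phi](\Sigma_u)\lesssim \cE_1$ from Proposition \ref{H2}, this closes the estimate.
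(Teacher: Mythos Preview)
Your setup---the multiplier $\eta X$, the identity $X\psi+q\psi=r^{\gamma-1}L(r\psi)$, the handling of boundary terms and of the transition region $4M\leq r\leq R$ via Proposition~\ref{H2}---matches the paper. The gap is in your Cauchy--Schwarz split of the nonlinear error. Splitting $r^{\gamma-1}=r^{(\gamma-3)/2}r^{(\gamma+1)/2}$ with a fixed small $\varepsilon$ produces the second error $\iint r^{\gamma+1}|\phi|^{2(p-1)}|Z\phi|^2\,d\vol$, which after Corollary~\ref{r3} becomes
\[
\cE_1^{p-1}\iint u_+^{-1-\epsilon_1}\,r^{\gamma-2-\epsilon_1}\bigl(|Z\phi|^2+|r\slashed\nabla Z\phi|^2\bigr)\,d\vol.
\]
This carries one more power of $r$ than the positive bulk $r^{\gamma-3}(\cdots)d\vol$, and the extra $u_+^{-1-\epsilon_1}$ does not compensate in the region $r\gg u_+$. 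Your proposed region-splitting does not close: on $\{r\geq u_+\}$ you invoke the internal $\min\{1,(u_+/r)^{\gamma_0/s}\}$ from the \emph{proof} of Corollary~\ref{r3}, but even that stronger bound requires $p+\gamma_0/s>5$ to force $r^{5-p-\gamma_0/s}\lesssim 1$, which fails for all $p\leq 3$ since $\gamma_0/s<2$. So for $\tfrac{1+\sqrt{17}}{2}<p\leq 3$ your second error cannot be bounded by $\cE_1^{C}E[Z\phi](\Sigma_{u_1})$, nor absorbed into $I$.

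The paper's fix is to put the $u_+$-weight on the other factor: write
\[
r^{\gamma-1}|\phi|^{p-1}|\psi|\,|L(r\psi)|\lesssim u_+^{-1-\epsilon_1}r^{\gamma-2}|L(r\psi)|^2+u_+^{1+\epsilon_1}r^{\gamma}|\phi|^{2(p-1)}|\psi|^2.
\]
The first term, in $dudvd\omega$, is $u_+^{-1-\epsilon_1}$ times the $r^{\gamma}$-flux on $\mathcal{H}_u$, so it is absorbed by Gronwall in $u$ against $\int_{\mathcal{H}_{u_2}}r^{\gamma}|L(r\psi)|^2dvd\omega$ (which is part of $I$). The second term, after Corollary~\ref{r3}, carries \emph{no} $u_+$-weight and becomes $\cE_1^{p-1}\iint r^{\gamma-3-\epsilon_1}(|\psi|^2+|r\slashed\nabla\psi|^2)\,d\vol$. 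Since $\gamma-3-\epsilon_1$ lies strictly between $-4$ and $\gamma-3$, H\"older interpolates this between the integrated local energy (weight $r^{-4}$, bounded by $E[\psi](\Sigma_{u_1})$ via Proposition~\ref{H2}) and the bulk $I$ (weight $r^{\gamma-3}$), yielding $\cE_1^{p-1}E[\psi](\Sigma_{u_1})^{\epsilon_1/(\gamma+1)}(E[\psi](\Sigma_{u_1})+I)^{1-\epsilon_1/(\gamma+1)}$, and Young's inequality closes the estimate.
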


\begin{proof}
The proof goes similar to the proof for the Proposition \ref{divsk1} once we have controlled the nonlinearity. Let $ f=1$ %for some $\epsilon>0$
and $\eta(r)$, $m(r)$, $q(r)$ be the same as those in the proof for the Proposition \ref{divsk1}. Let $R$ be sufficiently large depending on $\ga$ and $\epsilon $ sufficiently small depending on $R$ such that when $r\geq R$ it holds that
\begin{align*}
2-\ga+O(\mu)>0,\quad \ga (1-\frac{\ga}{2})^2+O(\mu)>0, \quad -f'= 0.
\end{align*}
Denote $\psi=Z^k\phi$.
In view of Lemma \ref{rpw}, when $r\geq R$, we have the lower bound
  \begin{align*}
    Q[\psi,\eta X, \eta q, \eta m]=Q[Z^{ k}\phi,  X,  q,  m]
    & \gtrsim r^{\gamma-3}\big(|L(r \psi)|^2+|r\slashed\nabla \psi|^2+|\psi|^2\big).
\end{align*}
For the nonlinear term, we can bound that
\begin{equation*}
  \begin{split}
  |\Box_g \psi (X \psi+q \psi)|&\les f r^{\gamma-1}|\phi|^{p-1} |\psi| |L(r \psi)|
  \\&
  \les   u_+^{-1-\epsilon_1} r^{\gamma-2}|L(r \psi)|^2
  +  u_+^{1+\epsilon_1}r^{\gamma}|\phi|^{2(p-1)} |\psi|^2.
  \end{split}
\end{equation*}
Here $\ep_1 >0$ is the small positive constant in Corollary \ref{r3}.
 Now apply the energy identity \eqref{eq:energy:id} to the scalar field $\psi=Z^k\phi$, $k=0, 1$, the vector field $\eta X$, the function  $\eta q$ and 1-form $\eta m$ on the domain $\cD_{u_1, u_2}^{v_0}$ bounded by $\Si_{u_1}$, $\Si_{u_2}$ and $\Hb_{v_0}^{u_1, u_2}$. We obtain the energy identity
\begin{equation*}
\begin{split}
  &\iint_{\mathcal{D}_{u_1, u_2}^{v_0}}Q[\psi, \eta X, \eta q, \eta m]+\eta\Box_g \psi ( X \psi +q \psi) d\vol+\int_{\underline{\mathcal{H}}_{v_0}^{u_1,u_2}} \tilde{P}_{u}r^2dud\omega+\int_{\mathcal{H}_{u_2}^{v_0}}\tilde{P}_{v}r^2dvd\omega\\
  &=-\int_{\Sigma_{u_1}\cap\{r<R \}}\tilde{P}^{t}r^2drd\omega
  +\int_{\mathcal{H}_{u_1}^{v}}\tilde{P}_{v}r^2dvd\omega
+\int_{\Sigma_{u_2}\cap\{ r<R \}}\tilde{P}^{t}r^2drd\omega,
\end{split}
\end{equation*}
in which  $\tilde{P}$ is short for $P[\psi, \eta X, \eta q, \eta m, 0]$.
Similar to the proof for Proposition \ref{divsk1}, the lower order terms (for example  $-\frac{1}{2} L(\eta f (1-\mu)r^{\ga+1}|\psi|^2)$  on the out going null hypersurface $\H_{u}$) are canceled by doing integration by parts. Here we emphasize that the boundary of $\cD_{u_1, u_2}^{v_0}$ is closed and $\eta=0$ when $r\leq 4M$. Now in view of Lemma \ref{rpw}, we move all the terms restricted to region $\{r\geq R\}$ with positive sign to the left hand side and bound the terms on the region $\{r\leq R\}$ by the integrated local energy estimate of Proposition \ref{H2}. We then derive that
  \begin{align*}
  I:&=\iint_{\mathcal{D}_{u_1, u_2}^{v_0}\cap\{r\geq R\}}r^{\gamma-3}\Big(
  |L(r\psi)|^2
  +|\psi|^2
  +|r\slashed\nabla \psi|^2\Big) d\vol
  +\int_{\mathcal{H}_{u_2}^{v_0}}r^{\gamma}|L(r\psi)|^2dvd\omega
  \\&
  \lesssim \iint_{\mathcal{D}_{u_1, u_2}^{v_0} }  \eta u_+^{-1-\epsilon_1} r^{\gamma-2}|L(r \psi)|^2
  +\eta u_+^{1+\epsilon_1}r^{\gamma}|\phi|^{2(p-1)} |\psi|^2 d\vol +\int_{\mathcal{H}_{u_2}^{v_0}}r^{\gamma}|L(r\psi)|^2dvd\omega\\
  &\quad +\int_{(\Si_{u_1}\cup \Si_{u_2})\cap\{4M\leq r<R\}} |\pa \psi|^2+|\psi|^2dx+\iint_{\mathcal{D}_{u_1, u_2}^{v_0}\cap\{4M\leq r\leq R\}} |\pa \psi|^2+|\psi|^2d\vol\\
  &
  \lesssim \iint_{\mathcal{D}_{u_1, u_2}^{v_0} }  \eta u_+^{-1-\epsilon_1} r^{\gamma-2}|L(r \psi)|^2
  +\eta u_+^{1+\epsilon_1}r^{\gamma}|\phi|^{2(p-1)} |\psi|^2 d\vol   +\int_{\mathcal{H}_{u_2}^{v_0}}r^{\gamma}|L(r\psi)|^2dvd\omega+E[\psi](\Si_{u_1}).
\end{align*}
The first term on the right hand side will be absorbed by using Gronwall's inequality. For the second term involving the nonlinearity, using Corollary \ref{r3} and Sobolev embedding on the unit sphere, we can estimate that
\begin{equation*}
  \begin{split}
  \int_{|\om|=1}|\phi|^{2(p-1)}|\psi|^2d\omega
  &\leq \Big(\int_{|\om|=1}|\phi|^{2(p-1)s}d\omega\Big)^{\frac{1}{s}}
  \Big(\int_{|\om|=1}|\psi|^{2s'}d\omega\Big)^{\frac{1}{s'}}
  \\&
  \lesssim
  \Big(\int_{|\om|=1}|\phi|^{2(p-1)s}d\omega\Big)^{\frac{1}{s}}
  \int_{|\om|=1}|\psi|^2+|r\slashed\nabla \psi|^2d\omega
  \\&
  \lesssim r^{-3-\epsilon_1}u_+^{-1-\epsilon_1} \cE_1^{p-1}\int_{|\om|=1}| \psi|^2+|r\slashed\nabla \psi|^2d\omega.
  \end{split}
\end{equation*}
Since $\ga<2$ and $\eta$ vanishes when $r\leq 4M$, we thus can show that
\begin{align*}
&\iint_{\mathcal{D}_{u_1, u_2}^{v_0} }
  \eta u_+^{1+\epsilon_1}r^{\gamma}|\phi|^{2(p-1)} |\psi|^2 d\vol \les \cE_1^{p-1}\iint_{\mathcal{D}_{u_1, u_2}^{v_0} \cap\{r\geq 4M\} }r^{\gamma-3-\epsilon_1} (|\psi|^2+|r\slashed\nabla \psi|^2) d\vol\\ \leq& \cE_1^{p-1}\left(\iint_{\mathcal{D}_{u_1, u_2}^{v_0} \cap\{r\geq 4M\} }r^{-4} (|\psi|^2+|r\slashed\nabla \psi|^2) d\vol\right)^{\frac{\epsilon_1}{\gamma+1}}\left( \iint_{\mathcal{D}_{u_1, u_2}^{v_0} \cap\{r\geq 4M\} }r^{\gamma-3} (|\psi|^2+|r\slashed\nabla \psi|^2) d\vol\right)^{1-\frac{\epsilon_1}{\gamma+1}}\\ \les& \cE_1^{p-1}E[\psi](\Si_{u_1})^{\frac{\epsilon_1}{\gamma+1}}(E[\psi](\Si_{u_1})+I)^{1-\frac{\epsilon_1}{\gamma+1}}.
\end{align*}
The last step follows from the integrated local energy estimate of Proposition \ref{H2}.
By using Gronwall's inequality, the $r$-weighted energy estimate for $Z^{k}\phi$ then follows by letting  $v_0\rightarrow \infty$.

\end{proof}

 Now we are able to prove Proposition \ref{decayezphi}.
\begin{proof}[\textbf{Proof of Proposition \ref{decayezphi}}]
The proof goes similar to that for Proposition \ref{ephidecay0} once we have the above $r$-weighted energy estimate for $Z\phi$ and the Strichartz estimate for $Z^2\phi$. The differences are that the implicit constant also relies on the first order energy $\cE_1$ and the Strichartz estimate for the second order derivative of the solution may grow in time instead of being uniformly bounded. We will elaborate the main differences in the argument and skip those steps similar to the proof of Proposition \ref{ephidecay0}.

To obtain a bound for the integral of the energy flux $E[Z\phi](\Si_u)$ from time $u_1$ to $u_2$, it is important to control the integral near the photon sphere $r=3M$, which can not be bounded by the integrated local energy estimate due to the degeneracy on the photon sphere. This degeneracy can be compensated by higher order energy via Lemma \ref{lp}. More precisely, by using Lemma \ref{lp} together with the Strichartz estimate of Proposition \ref{H2}, we can show that
\begin{align*}
 &\int_{u_1}^{u_2}\int_{ |r^*|\leq \f12}|(\partial_{\tv}, \slashed\nabla)Z\phi|^2 d\vol\\
 &  \lesssim  \int_{u_1}^{u_2}\int_{ |r^*|\leq \f12 }|\ln u_{+}|^2|\ln |r^{*}||^{-2}|(\partial_{\tv},\slashed\nabla)Z\phi|^2 dx
 +\int_{u_1}^{u_2} u_{+}^{- 5N/6} \|( \partial_{\tv},\slashed\nabla) Z\phi\|_{L_x^{12}( |r^*|\leq \f12)}^2du\\
 &\les (\ln (u_2)_+)^2 \|Z\phi\|_{LE(\cD_{u_1, u_2})}+\int_{\tv_1}^{\tv_2} \tv_{+}^{- 5N/6} \| Z^2 \phi\|_{L_x^{12}}^2 d\tv \\
 &\les (\ln (u_2)_+)^2 E[Z\phi](\Si_{u_1})+(\tv_1)_+^{ \frac{1}{2}-\frac{N}{3}}\|\tv_+^{- \frac{N}{4}}Z^2\phi\|_{L_{\tv}^4 L_x^{12}(\tcD_{\tv_1, \tv_2})}^2.
\end{align*}
Here $\tv_i$, $i=1, 2$ are the parameters such that $\tSi_{\tv_i}\cap\{r\leq R\}=\Si_{u_i}\cap \{r\leq R\}$.
Now in view of the Strichartz estimate of Lemma \ref{Z2phi}, we in particular have that
\begin{align*}
\| Z^2\phi\|_{L_{\tv}^4 L_x^{12}(\tcD_{\tv_1, \tv_2})}^4\les (\tv_2)_+^{ 4} \cE_2^2
\end{align*}
for all $\tv_1\leq \tv_2$. Hence for $ N>4$, we can show that
\begin{align*}
\|\tv_+^{- \frac{N}{4}}Z^2\phi\|_{L_{\tv}^4 L_x^{12}(\tcD_{\tv_1, \tv_2})}^4&=\int_{\tv_1}^{\tv_2} \tv_+^{- N} d \|Z^2\phi\|_{L_{\tv}^4 L_x^{12}(\tcD_{\tv_1, \tv })}^4\\
&=(\tv_2)_+^{- N} \|Z^2\phi\|_{L_{\tv}^4 L_x^{12}(\tcD_{\tv_1, \tv_2 })}^4+N \int_{\tv_1}^{\tv_2} \tv_+^{- N-1}  \|Z^2\phi\|_{L_{\tv}^4 L_x^{12}(\tcD_{\tv_1, \tv })}^4d\tv \\
&\les (\tv_2)_+^{ 4-N} \cE_2^2+N \int_{\tv_1}^{\tv_2} \tv_+^{ -N+3}  \cE_2^2 d\tv\\
&\les (\tv_1)_+^{ 4-N}\cE_2^2.
\end{align*}
We therefore have shown that
\begin{align*}
 \int_{u_1}^{u_2}\int_{ |r^*|\leq \f12}|(\partial_{\tv}, \slashed\nabla)Z\phi|^2 d\vol
  &\les (\ln (u_2)_+)^2 E[Z\phi](\Si_{u_1})+(u_1)_+^{ \frac{5}{2}-\frac{5N}{6}}\cE_2 .
\end{align*}
Here note that $(u_1)_+\les (\tv_1)_+$ and $ N>4$.

Then similar to the proof for Proposition \ref{ephidecay0}, we can obtain
\begin{equation*}
\begin{split}
  \int_{u_1}^{u_2}E[Z\phi](\Sigma_u)du
     \les  (\ln (u_2)_+)^2 E[Z\phi](\Si_{u_1})+(u_1)_+^{ \frac{5}{2}-\frac{5N}{6}}\cE_2 +\int_{\mathcal{H}_{u_1}}
  r|L(rZ\phi)|^2dvd\omega+\cE_1^{C}E[Z\phi](\Sigma_{u_1}).
  \end{split}
\end{equation*}
Now the $r$-weighted energy estimate of Proposition \ref{rwzphi} implies that there is a dyadic sequence  $\{u_k\}_{3}^{\infty}$ such that
\begin{equation*}
\begin{split}
  \int_{\mathcal{H}_{u}}r^{\gamma}|L(rZ\phi)|^2 dvd\omega
 \lesssim \cE_1^{ C} ,\quad \int_{\mathcal{H}_{u_k}}r^{\gamma-1}|L(rZ\phi)|^2dvd\omega\lesssim u_k^{-1} \cE_1^{ C}.
\end{split}
\end{equation*}
In particular interpolation shows that
\begin{equation*}
\int_{\mathcal{H}_{u_k}}r|L(r Z\phi)|^2dvd\omega\lesssim u_k^{1-\gamma} \cE_1^{ C},\ k\geq 3.
\end{equation*}
Using the energy estimate of  Proposition \ref{H2}, we first can get that  (for $u>1$)
\begin{equation*}
\begin{split}
  \frac{1}{2}u E[Z\phi](\Sigma_{u})\les \int_{\frac{u}{2}}^u E[Z\phi](\Si_{u'})du'
  \lesssim
  (\ln u_{+})^2\cE_1+u_+^{ \frac{5}{2}-\frac{5N}{6}}\cE_2+\cE_1^{ C}\lesssim
  (\ln u_{+})^2\cE_1^{ C}+u_+^{ \frac{5}{2}-\frac{5N}{6}}\cE_2 .
  \end{split}
\end{equation*}
Here recall that we have assumed that $\cE_1\geq 1$.
In particular we derive a weak decay estimate for the energy flux
\begin{equation*}
  E[Z\phi](\Sigma_u)\lesssim u_+^{-1} (\ln u_+)^2 \cE_1^{ C}+u_+^{ \frac{3}{2}-\frac{5N}{6}}\cE_2,\quad \forall u\geq -\frac{R^*}{2}.
\end{equation*}
Based on this weak decay estimate, we apply the above energy inequality to the sequence $u_k$
 \begin{align*}
  &(u_{k+1}-u_{k})E[Z\phi](\Sigma_{u_{k+1}})\lesssim \int_{u_{k}}^{u_{k+1}}E[Z\phi ](\Sigma_{u})du \\
  &\lesssim \ln(u_{k+1})^2E[Z\phi ](\Sigma_{u_k})+
  \int_{\mathcal{H}_{u_k}}r|L(rZ\phi)|^2dvd\omega
  +u_k^{ \frac{5}{2}-\frac{5N}{6}}\cE_2+\cE_1^{ C} E[Z\phi](\Si_{u_k})
  \\&
  \lesssim u_{k}^{1-\gamma}\cE_1^{ C}+u_{k}^{ \frac{5}{2}-\frac{5N}{6}}\cE_2+\ln(u_{k+1})^2 \cE_1^{ C}E[Z\phi ](\Sigma_{u_k}).
\end{align*}
Since the sequence $u_k$ is dyadic, for general $u\in [u_k, u_{k+1}]$, using the above weak decay estimate for $E[Z\phi](\Si_u)$, we can show that
\begin{align*}
E[Z\phi](\Si_u)&\les u_{+}^{-\gamma}\cE_1^{ C}+u_{+}^{ \frac{3}{2}-\frac{5N}{6}}\cE_2+u_+^{-1} \ln(u_{k+1})^2 \cE_1^{ C} E[Z\phi ](\Sigma_{u_k})\\
 &\les u_+^{-\ga}\cE_1^{ C}+u_+^{ \frac{3}{2}-\frac{5N}{6}}\cE_1^{ C}\cE_2\les u_+^{-\ga}\cE_1^{ C}\cE_2.
\end{align*}
We thus finished the proof for Proposition \ref{decayezphi}.

\end{proof}

\section{Pointwise decay estimates for the solution}
 Now we are able to derive the desired pointwise decay estimate for the solution. First of all, using the trace theorem \eqref{eq:trace}, we have
 \begin{align*}
 \|Z^k\phi(u, r, \om)\|_{L_{\om}^4}\les r^{-\f12 }(E[Z^k\phi](\Si_u))^{\f12},\quad k\leq 1.
 \end{align*}
 As $Z$ can be the angular momentum vector fields $\Om_{ij}$, in view of the improved energy flux decay estimates of Proposition \ref{decayezphi}, Sobolev embedding on the unit sphere then leads to the rough decay estimate
 \begin{align*}
 |\phi(u, r, \om)|\les \sum\limits_{k\leq 1}{ \sum\limits_{i,j}}\|\Om_{i, j}^k\phi(u, r, \om)\|_{L_{\om}^4}\les r^{-\f12 }u_+^{-\frac{\ga}{2}} \cE_1^{ C}\sqrt{\cE_2}.
 \end{align*}
 This decay estimate is sufficiently strong in the region $\{r\leq R\}$, where the retarded time $u$ is comparable to the physical time $t$ or $\tv$.

 To improve the decay estimate for the solution in the far away region near the future null infinity, we make use of the $r$-weighted energy estimate of Proposition \ref{rwzphi} and Lemma  \ref{betterrl}. First from the $r$-weighted energy estimate for $Z^k\phi$, $k\leq 1$, we conclude that
 \begin{align*}
 \int_{\H_u}r^{\ga}|L(rZ^k\phi)|^2 dvd\om\les \cE_1^{ C}
 \end{align*}
 by setting $u_1=-\frac{R^*}{2}$ and $u>u_1.$
Then from Lemma \ref{betterrl}, for all
$$1<\ga<2, \quad 2<q_1<2\ga,\quad r_2=r,\quad r_1=u_+,\quad  r\geq u_+\geq R $$
 we have
 \begin{align*}
\|r Z^k\phi(r, u, \om)\|_{L_\om^{q_1}}^{q_1} &\lesssim u_+^{\frac{q_1}{2}} \|u_+^{\f12}  Z^k\phi(u_+, u, \om)\|_{L_\om^{q_1}}^{q_1}+\cE_1^{ C} u_+^{\frac{q_1}{2}-\ga}  E[Z^k\phi](\Si_u)^{\frac{q_1-2}{2}}  \\
&\les u_+^{\frac{q_1}{2}} \| u_+^{\f12} Z^k\phi(u_+, u, \om)\|_{L_\om^{4}}^{q_1}+\cE_1^{ C}  u_+^{\frac{q_1}{2}-\ga}  E[Z^k\phi](\Si_u)^{\frac{q_1-2}{2}} \\
&\les u_+^{\frac{q_1}{2}} E[Z^k\phi](\Si_u)^{\frac{q_1}{2}}+\cE_1^{ C}  u_+^{\frac{q_1}{2}-\ga}  E[Z^k\phi](\Si_u)^{\frac{q_1-2}{2}}  .
\end{align*}
By using Sobolev embedding, we derive that
\begin{align*}
|\phi(u, r, \om)|&\les r^{-1} u_+^{\frac{1-\ga}{2}} \cE_1^{ C}\sqrt{\cE_2}+\cE_1^{ C}u_+^{\f12-\frac{\ga}{q_1}} u_+^{-\ga \frac{q_1-2}{2q_1}}  (\cE_1^{ C}\sqrt{\cE_2})^{\frac{q_1-2}{q_1}}\\
&\les r^{-1} u_+^{\frac{1-\ga}{2}} \cE_1^{ C}\sqrt{\cE_2}.
\end{align*}
We thus finished the proof for the main Theorem.

 %\bibliography{shiwu}{}
%\bibliographystyle{plain}

  {\footnotesize%
  \addvspace{\medskipamount}
  \textsc{Beijing International Center for Mathematical Research, Peking University, Beijing, China} \par
  \textit{E-mail address}: \texttt{meihe@pku.edu.cn}

  \addvspace{\medskipamount}
  \textsc{School of Mathematical Sciences, Peking University, Beijing, China} \par
  \textit{E-mail address}: \texttt{jnwdyi@pku.edu.cn} \par

  \addvspace{\medskipamount}
  \textsc{Beijing International Center for Mathematical Research, Peking University, Beijing, China} \par
  \textit{E-mail address}: \texttt{shiwuyang@math.pku.edu.cn}
  }

\end{document}